\newcommand {\R}{{\mathbb R}}
\newcommand {\N}{{{\mathbb N}}}
\newcommand {\C}{{\mathbb C}}
\newcommand {\F}{{\mathcal{F}}}
\newcommand {\Ri}{{\mathrm{R}}} 
\newcommand {\w}{{\omega}} 
\newcommand{\al}{{\alpha}} 
\newcommand {\ph}{{\varphi}} 
\newcommand {\AM}{{\mathrm{AM}}} 
\newcommand{\Ma}{{\mathcal{M}}} 
\newcommand {\HT}{{\mathrm{H}}} 
\newcommand {\La}{{\mathcal{L}}} 
\newcommand{\supp}{{\mathrm{supp}}} 
\newcommand{\abs}[1]{\lvert#1\rvert}
\newcommand {\ran}{{\mathrm{ran}}}
\newcommand{\eM}{\mathrm{M}} 
\newcommand{\dom}{\mathrm{dom}}
\newcommand{\ud}{\mathrm{d}} 
\newcommand{\norm}[1]{\left\|#1\right\|}
\newcommand{\Ell}{\mathrm{L}} 
\newcommand{\Ce}{\mathrm{C}}
\newcommand{\vanish}[1]{\relax}
\DeclareMathOperator{\Real}{Re}
\newtheorem{theorem}{Theorem}[section]
\newtheorem{lemma}[theorem]{Lemma}
\newtheorem{proposition}[theorem]{Proposition}
\newtheorem{corollary}[theorem]{Corollary}
\theoremstyle{definition} 
\newtheorem{remark}[theorem]{Remark}
\numberwithin{equation}{section}
\begin{document}

\begin{frontmatter}

\title{Functional Calculus for Semigroup Generators via Transference}

\author{Markus Haase} 
\ead{m.h.a.haase@tudelft.nl}

\author{Jan Rozendaal\corref{cor}\fnref{NWO}} 
\ead{J.Rozendaal-1@tudelft.nl} 

\address{Delft Institute of Applied
Mathematics\\Mekelweg 4\\2628CD Delft\\The Netherlands}

\fntext[NWO]{Supported by NWO-grant
613.000.908 ``Applications of Transference Principles".}

\cortext[cor]{Corresponding author}

\date{July 12, 2013}

\begin{abstract} 
In this article we apply a recently established
transference principle in order to obtain the boundedness of
certain functional calculi for semigroup generators. In particular,
it is proved that if $-A$ generates a $C_0$-semigroup on a Hilbert space,
then for each $\tau>0$ the operator $A$ has a bounded calculus 
for the closed ideal of bounded holomorphic functions on a 
(sufficiently large) right half-plane that satisfy 
$f(z)=O(e^{-\tau\Real(z)})$ as $\abs{z}\rightarrow \infty$. 
The bound of this calculus grows at most logarithmically as 
$\tau\searrow 0$. As a consequence, $f(A)$ is a
bounded operator for each holomorphic function $f$ (on 
a right half-plane) with polynomial decay at $\infty$.
Then we show that each semigroup generator has a so-called (strong) $m$-bounded
calculus for all $m\in\N$, and that this property characterizes
semigroup generators. Similar results are obtained if the underlying 
Banach space is a UMD space. Upon restriction to so-called $\gamma$-bounded
semigroups, the Hilbert space results actually hold in general Banach spaces.
\end{abstract}

\begin{keyword}
Functional calculus \sep Transference \sep Operator semigroup 
\sep Fourier multiplier \sep $\gamma$-boundedness 
\MSC[2010] 47A60 \sep 47D03 \sep 46B28 \sep 42B35 \sep 42A45
\end{keyword}

\end{frontmatter}

\section{Introduction}
\label{Introduction}

Roughly speaking, a functional calculus for a (possibly unbounded) operator
$A$ on a Banach space $X$ is a ``method''  of associating
a closed operator $f(A)$  to each $f= f(z)$ taken from a set of functions
(defined on some subset of the complex plane) in such a way that
formulae valid for the functions turn into valid 
formulae for the operators upon replacing the independent variable $z$ by
$A$. A common way to establish such a calculus is to start
with an algebra  of ``good'' functions $f$ where a definition of $f(A)$
as a bounded operator is more or less straightforward, and then extend
this ``primary'' or ``elementary calculus'' by means of multiplicative
``regularization'' (see \cite[Chapter 1]{Haase1} and \cite{Batty-Haase-Mubeen}). 
It is then  natural
to ask which of the so constructed closed operators $f(A)$ are actually
{\em bounded}, a question particularly relevant in applications, e.g., to 
evolution equations, see for instance \cite{Arendt04, Kalton-Weis1}. 

The latter question links functional calculus theory
to the theory of vector-valued
singular integrals, best seen in the theory
of sectorial (or strip-type) operators with  a bounded
$\HT^\infty$-calculus, see for instance \cite{Kunstmann-Weis}. 
It appears there that in order to obtain nontrivial results
the underlying Banach space must allow
for singular integrals to converge, i.e., be a UMD space (or better, 
a Hilbert space). Furthermore, even if the Banach space
is a Hilbert space, it turns out that simple resolvent estimates are not enough 
for the boundedness of an $\HT^\infty$-calculus \cite[Section 9.1]{Haase1}. 

However, some of the central positive results in that theory --- McIntosh's
theorem \cite{McIntosh}, the Boyadzhiev--deLaubenfels theorem 
\cite{Boyadzhiev-deLaubenfels}
and the Hieber--Pr\"uss theorem \cite{Hieber-Pruss98} --- show that
the presence of a $C_0$-group 
of operators does warrant the boundedness
of certain $\HT^\infty$-calculi. In \cite{Haase5}  
the underlying structure of these results was brought to light, 
namely a {\em transference principle}, 
a factorization of the operators $f(A)$ in terms
of vector-valued Fourier multiplier operators. Finally, 
in \cite{Haase6} it was shown that $C_0$-{\em semi}groups also
allow for such transference principles.

In the present paper, we develop this approach further. We
apply  the general form of the transference
principle for semigroups given in \cite{Haase6} in order to 
obtain bounded functional calculi for generators of $C_0$-semigroups.
These results, in particular Theorems 
\ref{main theorem on functional calculus with exponential decay},
\ref{composition with fractional powers}, and 
\ref{main result on m-bounded calculus},
are proved for  general Banach spaces. However,
they make use of (subalgebras of) the analytic $\Ell^{p}(\R;X)$-Fourier multiplier
algebra (see \eqref{analytic fourier multipliers} below for a definition), and hence are 
useful only if the underlying Banach space has a geometry that
allows for nontrivial Fourier multiplier operators.   
In case $X= H$ is a Hilbert space one obtains 
particularly nice results, which we want to summarize here.
(See Section \ref{m-bounded functional calculus}
for the definition of a strong $m$-bounded calculus.)

\begin{theorem}\label{Hilbert space theorem}
Let $-A$ be the generator of a bounded $C_0$-semigroup $(T(t))_{t\in\R_{+}}$ 
on a Hilbert space $H$ with $M := \sup_{t\in\R_{+}}\norm{T(t)}$.
Then the following assertions hold.

\begin{enumerate}
\item[\rm a)] For $\omega < 0$ and 
$f \in \HT^\infty\!(\Ri_\w)$ one has $f(A)T(\tau) \in \La(H)$
with
\begin{align}\label{f(A)T(tau)-estimate} 
\norm{f(A)T(\tau)} \le c(\tau)M^2 \norm{f}_{\HT^\infty\!(\Ri_\w)},
\end{align}
where $c(\tau) = O(\abs{\log(\tau)})$ as $\tau \searrow 0$, and 
$c(\tau) = O(1)$ as $\tau \to \infty$. 
\item[\rm b)] 
For $\w<0<\alpha$ and $\lambda\in\C$ with $\Real \lambda<0$ there is
$C\geq 0$ such that 
\begin{align}\label{f(A)Aalpha-estimate} 
\norm{f(A)(A-\lambda)^{-\alpha}} \le C M^2  \norm{f}_{\HT^\infty\!(\Ri_\w)}
\end{align}
for all $f \in \HT^\infty\!(\Ri_\w)$. In particular, 
$\dom(A^\alpha) \subseteq \dom(f(A))$.

\item[\rm c)]  $A$ has a strong $m$-bounded $\HT^\infty$-calculus of type $0$
for each $m \in \N$. 
\end{enumerate}
\end{theorem}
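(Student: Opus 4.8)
The plan is to derive all three assertions from a single transference estimate, applied to the general Banach space results (Theorems \ref{main theorem on functional calculus with exponential decay}, \ref{composition with fractional powers}, \ref{main result on m-bounded calculus}) specialized to $X = H$ a Hilbert space. The crucial input is that on a Hilbert space the analytic $\Ell^p(\R;H)$-Fourier multiplier algebra is well understood for $p=2$: by Plancherel's theorem, the $\Ell^2(\R;H)$-Fourier multiplier norm of a function $m$ is exactly $\norm{m}_{\Ell^\infty}$. This is what turns the abstract multiplier-algebra hypotheses into the clean bounds $M^2\norm{f}_{\HT^\infty(\Ri_\w)}$.

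\medskip

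For part (a), I would start from the transference identity for semigroups from \cite{Haase6}, which factors $f(A)T(\tau)$ through a Fourier multiplier operator on $\Ell^2(\R;H)$ whose symbol is (a shifted, windowed version of) $f$; the two factors contribute the two copies of $M$, and the multiplier norm equals the $\Ell^\infty$-norm of the symbol, which is controlled by $\norm{f}_{\HT^\infty(\Ri_\w)}$ once one has pushed the contour from $\Ri_\w$ into the domain of holomorphy. The extra $e^{-\tau\Real z}$-decay built into the relevant function ideal (cf.\ the abstract) is precisely what makes the windowed symbol lie in $\Ell^1\cap\Ell^\infty$, and tracking the $\Ell^1$-norm of the convolution kernel against the window width $\tau$ produces the constant $c(\tau)$; a direct estimate of $\int_0^\infty \min(1,\tau/t)\,\ud t/t$-type integrals gives $c(\tau)=O(\abs{\log\tau})$ as $\tau\searrow 0$ and $c(\tau)=O(1)$ as $\tau\to\infty$. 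I expect the extraction of the sharp logarithmic blow-up to be the main obstacle: one must choose the half-plane parameter and the cutoff scale as functions of $\tau$ and balance the resulting terms, rather than using a fixed contour.

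\medskip

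For part (b), the idea is to write $(A-\lambda)^{-\alpha}$ via the subordination / Balakrishnan-type integral formula as an average of the semigroup, $(A-\lambda)^{-\alpha} = \frac{1}{\Gamma(\alpha)}\int_0^\infty t^{\alpha-1} e^{\lambda t} T(t)\,\ud t$ (valid since $\Real\lambda<0$ and the semigroup is bounded), and then apply part (a) inside the integral: $\norm{f(A)(A-\lambda)^{-\alpha}} \le \frac{1}{\abs{\Gamma(\alpha)}}\int_0^\infty t^{\alpha-1} e^{\Real\lambda\, t}\,\norm{f(A)T(t)}\,\ud t \le \frac{M^2\norm{f}_{\HT^\infty(\Ri_\w)}}{\abs{\Gamma(\alpha)}}\int_0^\infty t^{\alpha-1} e^{\Real\lambda\, t} c(t)\,\ud t$. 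The logarithmic singularity of $c(t)$ at $0$ is integrable against $t^{\alpha-1}$ for any $\alpha>0$, and the exponential factor tames the $O(1)$ behaviour at infinity, so the integral converges to a finite constant $C$ depending on $\alpha,\lambda$; closing the operator $f(A)$ past the integral is routine. The inclusion $\dom(A^\alpha)\subseteq\dom(f(A))$ follows because $(A-\lambda)^{-\alpha}$ maps $H$ onto $\dom(A^\alpha)$.

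\medskip

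For part (c), recall that a strong $m$-bounded $\HT^\infty$-calculus of type $0$ means (see Section \ref{m-bounded functional calculus}) that for every $\w<0$ the map $f\mapsto f(A)(1-\lambda^{-1}A)^{-m}$, or equivalently $f^{(m)}\mapsto$ (a suitably normalized) bounded operator, is bounded from $\HT^\infty(\Ri_\w)$ into $\La(H)$ with the type-$0$ uniformity in $\w$. This is exactly the specialization to Hilbert space of Theorem \ref{main result on m-bounded calculus}, where the general-Banach-space statement carries a Fourier-multiplier-norm constant that collapses to a numerical constant times a power of $M$ by Plancherel. So the proof of (c) is: invoke Theorem \ref{main result on m-bounded calculus} with $X=H$, and observe that the analytic $\Ell^2$-multiplier algebra norm appearing there equals the sup-norm of the symbol; no further work is needed beyond checking that the symbols produced by the $m$-fold regularization $(1-\lambda^{-1}z)^{-m}f(z)$ still satisfy the decay needed to lie in that algebra, which is immediate since each factor $(1-\lambda^{-1}z)^{-1}$ is itself a bounded $\Ell^2$-multiplier on the relevant half-plane. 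The only subtlety is bookkeeping the type-$0$ claim, i.e.\ that the bound does not degenerate as $\w\to 0^-$; this is inherited from the $\tau$-uniform (indeed $\tau$-logarithmic) control in part (a) via the relation between the half-plane parameter and the regularization order.
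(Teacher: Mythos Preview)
Your approach to parts (a) and (b) is correct and essentially the paper's own: specialize the general Banach-space transference results (Theorem~\ref{main theorem on functional calculus with exponential decay} and Corollary~\ref{composition with semigroup operators} for (a), Theorem~\ref{composition with fractional powers} for (b)) to $X=H$, $p=2$, and use Plancherel to identify $\AM_2^H(\Ri_\w)=\HT^\infty(\Ri_\w)$ isometrically. Your description of the Balakrishnan integral argument for (b) matches Theorem~\ref{composition with fractional powers} exactly.

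For part (c), your bottom line---invoke Theorem~\ref{main result on m-bounded calculus} with $X=H$ and apply Plancherel---is correct and is precisely Corollary~\ref{specialize to Hilbert 2}. However, two points in your surrounding discussion are wrong and would mislead you if you tried to reprove Theorem~\ref{main result on m-bounded calculus} yourself. First, the definition: a (strong) $m$-bounded calculus concerns the boundedness of $f^{(m)}(A)$ with $\norm{f^{(m)}(A)}\le C(\w_0-\w)^{-m}\norm{f}$, not the boundedness of $f(A)(1-\lambda^{-1}A)^{-m}$. These are genuinely different notions; the latter is essentially part (b) with integer $\alpha=m$, and neither implies the other in general. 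Second, your claim that the type-$0$ dependence on $\w$ ``is inherited from the $\tau$-uniform (indeed $\tau$-logarithmic) control in part (a)'' is incorrect. The paper does \emph{not} derive (c) from (a). Instead it uses a separate transference principle (Proposition~\ref{transference m-bounded}): writing $f'=-\widehat{t\mu}$ when $f=\widehat{\mu}$, one factorizes $T_{t\mu}$ with the specific choice $\psi=\ph=\mathbf{1}_{\R_+}e_{\w}$, so that $\psi\ast\ph=t\,e_{\w}$ on all of $\R_+$ (no cutoff at $\tau$ is needed). This yields directly $\norm{f'(A)}\le M^2\norm{e_\w}_2^2\,\norm{f}_{\AM_2^H}=\tfrac{M^2}{2|\w|}\norm{f}_{\HT^\infty(\Ri_\w)}$, which is the required $(\w_0-\w)^{-1}$ behaviour for $m=1$; higher $m$ then follow from Lemma~\ref{1-bounded implies m-bounded}. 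The logarithmic estimate from (a) plays no role here.
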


\noindent
(See Corollary \ref{specialize to Hilbert 1} for a) and b) and 
Corollary \ref{specialize to Hilbert 2} for c).)

\smallskip

When $X$ is  a UMD space one can derive similar results, stated 
in Section \ref{semigroups on UMD spaces}. In Section 
\ref{gamma-bounded semigroups} we extend 
the Hilbert space results to general Banach spaces by replacing 
the assumption of boundedness of the semigroup by its 
{\em $\gamma$-boundedness}, a concept strongly put forward by 
Kalton and Weis \cite{Kalton-Weis2}. In particular, 
Theorem \ref{Hilbert space theorem} holds true 
for $\gamma$-bounded semigroups on arbitrary Banach spaces with $M$ 
being the $\gamma$-bound of the semigroup.

\bigskip

We stress the fact that in contrast to \cite{Haase1}, where 
{\em sectorial} operators and, accordingly, 
functional calculi on sectors, were considered, 
the present article deals with general semigroup generators and
with functional calculi on {\em half-planes}. (See Section 
\ref{Functional calculus} below.)
The abstract theory of (holomorphic) functional calculi on half-planes
can be found in \cite{Batty-Haase-Mubeen} 
where the notion of an $m$-bounded calculus
(for operators of half-plane type) has been introduced. 
Our Theorem \ref{Hilbert space theorem}.c) is basically contained in that paper 
(it follows directly from \cite[Cor.~6.5 and (7.1)]{Batty-Haase-Mubeen}). 

\medskip

The starting point of the present work was the article \cite{Zwart} 
by Hans Zwart, in particular \cite[Theorem 2.5, 2.]{Zwart}. 
There it is shown that one has an estimate \eqref{f(A)T(tau)-estimate}
with $c(\tau) = O(\tau^{-1/2})$ as $\tau \searrow 0$.  
(The case $\alpha > 1/2$ in \eqref{f(A)Aalpha-estimate} 
is an immediate consequence; however, that case
is essentially trivial, see Lemma \ref{decay implies laplace transform} 
below.) 

In \cite{Zwart} and its sequel paper \cite{Schwenninger-Zwart}
the 
functional calculus for a semigroup generator  is
constructed in a rather unconventional way using ideas from 
systems theory. However, a closer
inspection reveals that transference
(i.e., the factorization over a 
Fourier multiplier) is present there as well, hidden
in the very construction of the functional calculus.

\subsubsection{Notation and terminology}
\label{Notation and terminology}

We write $\N:=\left\{1,2,\ldots\right\}$ for the natural numbers and
$\R_{+}:=[0,\infty)$ for the nonnegative reals. The letters $X$ and $Y$
are used to denote Banach spaces over the complex number field.  The
space of bounded linear operators on $X$ is denoted by
$\La(X)$.  For a closed operator $A$ on $X$ its \emph{domain} is
denoted by $\dom(A)$ and its \emph{range} by $\ran(A)$.  The
\emph{spectrum} of $A$ is $\sigma(A)$ and the \emph{resolvent set}
$\rho(A):=\C\setminus\sigma(A)$.  For all $z\in\rho(A)$ the operator
$R(z,A):=(z-A)^{-1}\in\La(X)$ is the \emph{resolvent} of $A$ at $z$.

For $p\in[1,\infty]$, $\Ell^{p}(\R;X)$ is the Bochner space of
equivalence classes of $X$-valued $p$-Lebesgue integrable 
functions on $\R$. The H\"{o}lder conjugate of $p$ is $p'$, defined by
$\frac{1}{p}+\frac{1}{p'}=1$.  The norm on $\Ell^{p}(\R;X)$ is
usually denoted by $\norm{\cdot}_{p}$.

For $\w\in\R$ and $z\in \C$ we let $e_\w(z) := e^{\w z}$.  By $\eM(\R)$
(resp.~$\eM(\R_{+})$) we denote the space of complex-valued Borel
measures on $\R$ (resp.~$\R_{+}$) with the total variation norm, and
we write $\eM_{\w}(\R_{+})$ for the distributions $\mu$ on $\R_{+}$ of
the form $\mu(\ud{s})=e^{\w s}\nu(\ud s)$ for some
$\nu\in\eM(\R_{+})$. Then $\eM_{\w}(\R_{+})$ is a Banach algebra under
convolution with the norm
\begin{align*}
\norm{\mu}_{\eM_{\w}(\R_{+})}:=\norm{e_{-\w}\mu}_{\eM(\R_{+})}.
\end{align*} 
For $\mu\in\eM_{\w}(\R_{+})$ we let $\supp(\mu)$ be the
topological support of $e_{-\w}\mu$. A function $g$ such that
$e_{-\w}g\in \Ell^{1}(\R_{+})$ is usually identified with its
associated measure $\mu\in \eM_{\w}(\R_{+})$ given by
$\mu(\ud{s})=g(s)\ud{s}$. 
Functions and measures defined on $\R_{+}$ are identified 
with their extensions to $\R$ by setting them equal to zero outside $\R_{+}$.

For an open subset $\Omega\neq \emptyset$ of $\C$  we let
$\HT^{\infty}\!(\Omega)$ be the space of bounded holomorphic functions
on $\Omega$, a unital Banach algebra with respect to the norm
\begin{align*} 
\norm{f}_\infty := 
\norm{f}_{\HT^{\infty}\!(\Omega)}:=\sup_{z\in\Omega}\,\abs{f(z)}\qquad(f\in\HT^{\infty}\!(\Omega)).
\end{align*} 
We shall mainly consider the case where $\Omega$ is equal
to a right half-plane
\begin{align*} 
\Ri_{\w}:=\left\{z\in\C\mid\Real(z)>\w\right\}
\end{align*} 
for some $\w\in\R$ (we write $\C_{+}$ for $\Ri_{0}$). 

For convenience we abbreviate the coordinate function $z \mapsto z$ 
simply by the letter $z$. Under this convention, $f = f(z)$ for a function
$f$ defined on some domain $\Omega \subseteq \C$.

The \emph{Fourier transform} of an $X$-valued tempered distribution
$\Phi$ on $\R$ is denoted by $\F\Phi$. For instance, if
$\mu\in\eM(\R)$ then $\F\mu\in \Ell^{\infty}\!(\R)$ is given by
\begin{align*} 
\F\mu(\xi):=\int_{\R}e^{-i\xi s}\,\mu(\ud{s})\qquad(\xi\in\R).
\end{align*} For $\w\in\R$ and $\mu\in\eM_{\w}(\R_{+})$ we let
$\widehat{\mu}\in\HT^{\infty}\!(\Ri_{\w})\cap \Ce(\overline{\Ri_{\w}})$,
\begin{align*} 
\widehat{\mu}(z):=\int_{0}^{\infty}e^{-zs}\,\mu(\ud{s})\qquad(z\in \Ri_{\w}),
\end{align*} 
be the \emph{Laplace-Stieltjes transform} of $\mu$.

\section{Fourier multipliers and functional calculus}
\label{Fourier multipliers and functional calculus}

We briefly discuss some of the concepts that will be used in what follows.

\subsection{Fourier multipliers}
\label{Fourier multipliers}

We shall need results from Fourier analysis as collected in
\cite[Appendix E]{Haase1}. Fix a Banach space $X$ and let $m\in
\Ell^{\infty}\!(\R;\La(X))$ and $p\in[1,\infty]$. Then $m$ is a
\emph{bounded $\Ell^{p}(\R;X)$-Fourier multiplier} if there exists
$C\geq 0$ such that
\begin{align*} 
T_{m}(\ph):=\mathcal{F}^{-1}(m \cdot \F \ph)\in
\Ell^{p}(\R;X) \quad \text{and} \quad
\norm{T_{m}(\ph)}_{p}\leq C\norm{\ph}_{p}
\end{align*} 
for each $X$-valued Schwartz function $\ph$.  
In this case the mapping $T_{m}$ extends
uniquely to a bounded operator on $\Ell^{p}(\R;X)$
if $p<\infty$ and on $\Ce_{0}(\R;X)$ if $p=\infty$.  
We let $\norm{m}_{\Ma_{p}(X)}$ be
the norm of the operator $T_{m}$ and let $\Ma_{p}(X)$ be the unital
Banach algebra of all bounded $\Ell^{p}(\R;X)$-Fourier multipliers,
endowed with the norm $\norm{\cdot}_{\Ma_{p}(X)}$.

\medskip

For $\w\in\R$ and $p\in[1,\infty]$ we let
\begin{align}\label{analytic fourier multipliers}
\AM_{p}^{X}\!(\Ri_\w) :=  
\left\{f\in
\HT^{\infty}\!(\Ri_{\w})\mid f(\w+i\cdot)\in\Ma_{p}(X)\right\}
\end{align}
 be the \emph{analytic $\Ell^{p}(\R;X)$-Fourier multiplier
algebra} on $\Ri_{\w}$, endowed the norm
\begin{align*}
\norm{f}_{\AM_{p}^{X}}:=
\norm{f}_{\AM_{p}^{X}\!(\Ri_{\w})}:=\norm{f(\w+i\cdot)}_{\Ma_{p}(X)}.
\end{align*} 
Here $f(\w + i \cdot) \in \Ell^{\infty}\!(\R)$ denotes the
{\em trace} of the holomorphic function $f$ on the boundary
$\partial\Ri_\w = \w + i \R$. By classical Hardy space theory,
\begin{align}\label{fm.e.trace} 
f(\w+is):=\lim_{\w'\searrow
\w}f(\w'+is)
\end{align} 
exists for almost all $s\in \R$, with
$\norm{f(\w+i\cdot)}_{\Ell^{\infty}\!(\R)}=\norm{f}_{\HT^{\infty}\!(\Ri_{\w})}$.

\begin{remark}[Important!]\label{important-remark} 
To simplify notation we sometimes omit the reference to the Banach space $X$
and write $\AM_{p}(\Ri_\w)$ instead of $\AM_{p}^{X}\!(\Ri_\w)$ whenever it is convenient.
\end{remark}

The space $\AM_{p}^X\!(\Ri_{\w})$ is a unital Banach algebra,
contractively embedded in $\HT^{\infty}\!(\Ri_{\w})$, and
$\AM_{1}^X\!(\Ri_{\w})=\AM_{\infty}^X\!(\Ri_{\w})$ is contractively embedded
in $\AM_{p}^X\!(\Ri_{\w})$ for all $p\in(1,\infty)$,
cf.~\cite[p.~347]{Haase1}.

For our main results we need two lemmas about the analytic
multiplier algebra.

\begin{lemma}\label{alternative description} 
For every Banach space $X$, all $\w\in\R$ and
$p\in[1,\infty]$,
\begin{align*}
\AM_{p}^{X}\!(\Ri_{\w})=\left\{f\in\HT^{\infty}\!(\Ri_{\w})\left|\sup_{\w'>\w}\norm{f(\w'+i\cdot)}_{\Ma_{p}(X)}<\infty\right.\right\}
\end{align*}
with 
\quad$\norm{f}_{\AM_{p}^{X}\!(\Ri_{\w})}=\sup_{\w'>\w}\norm{f(\w'+i\cdot)}_{\Ma_{p}(X)}$\quad
for all $f\in\AM_{p}^{X}\!(\Ri_{\w})$.
\end{lemma}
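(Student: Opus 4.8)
The plan is to establish two things. First, (i) that $\w' \mapsto \norm{f(\w'+i\cdot)}_{\Ma_{p}(X)}$ is nonincreasing on $(\w,\infty)$ and bounded above by $\norm{f(\w+i\cdot)}_{\Ma_{p}(X)}$ whenever the boundary trace $f(\w+i\cdot)$ from \eqref{fm.e.trace} lies in $\Ma_{p}(X)$; this yields the inclusion ``$\subseteq$'' and one of the two norm inequalities. Second, (ii) that if $C := \sup_{\w'>\w}\norm{f(\w'+i\cdot)}_{\Ma_{p}(X)}$ is finite, then $f(\w+i\cdot) \in \Ma_{p}(X)$ with $\norm{f(\w+i\cdot)}_{\Ma_{p}(X)} \le C$; this gives the reverse inclusion and the remaining inequality. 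Throughout I would use two standard facts. The first is that the $\Ma_{p}(X)$-norm is invariant under translation of the symbol: translating $m$ corresponds, on the Fourier side, to conjugating $T_{m}$ by modulation operators, which act isometrically on $\Ell^{p}(\R;X)$ and on $\Ce_{0}(\R;X)$. The second is that a bounded holomorphic function on a half-plane is the Poisson integral of its boundary trace, so that for $\w \le \w'' < \w'$ and $t := \w'-\w''$ one has $f(\w'+is) = \int_{\R}P_{t}(s-u)f(\w''+iu)\,\ud u$, with $P_{t}$ the half-plane Poisson kernel, $P_{t} \ge 0$ and $\int_{\R}P_{t}=1$; for $\w''>\w$ this involves only honest restrictions of $f$, and for $\w''=\w$ it uses \eqref{fm.e.trace} (classical Hardy space theory, see e.g.\ \cite[Appendix E]{Haase1}).

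For (i) I would fix $\w' > \w$, write $g := f(\w+i\cdot)$, assume $g \in \Ma_{p}(X)$ (otherwise there is nothing to prove), and read the Poisson formula as a Bochner integral $f(\w'+i\cdot) = \int_{\R}P_{\w'-\w}(u)\,g(\cdot-u)\,\ud u$ in $\Ell^{\infty}(\R;\La(X))$. Applying $T_{(\cdot)}\ph$ for a Schwartz function $\ph$ and pulling it through the integral (justified by pairing with functionals), then invoking Minkowski's inequality and translation invariance, gives $\norm{T_{f(\w'+i\cdot)}\ph}_{p} \le \norm{g}_{\Ma_{p}(X)}\norm{\ph}_{p}$, hence $\norm{f(\w'+i\cdot)}_{\Ma_{p}(X)} \le \norm{f(\w+i\cdot)}_{\Ma_{p}(X)}$. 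Running the same averaging estimate between two interior lines yields monotonicity in $\w'$, and letting $\w'\searrow\w$ produces the claimed supremum bound.

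For (ii) I would fix a Schwartz function $\ph$ and note that the symbols $f(\w'+i\cdot)$, $\w'>\w$, are uniformly bounded by $\norm{f}_{\HT^{\infty}(\Ri_{\w})}$ while $\F\ph$ is integrable, so by dominated convergence and \eqref{fm.e.trace} one gets $T_{f(\w'+i\cdot)}\ph(s) = \F^{-1}(f(\w'+i\cdot)\F\ph)(s) \to \F^{-1}(f(\w+i\cdot)\F\ph)(s)$ as $\w'\searrow\w$, for every $s$. Combined with the uniform bound $\norm{T_{f(\w'+i\cdot)}\ph}_{p} \le C\norm{\ph}_{p}$ and lower semicontinuity of the $\Ell^{p}$-norm under pointwise convergence (Fatou's lemma for $p<\infty$, the elementary estimate on suprema for $p=\infty$), this gives $\norm{\F^{-1}(f(\w+i\cdot)\F\ph)}_{p} \le C\norm{\ph}_{p}$; since $\ph$ was arbitrary, $f(\w+i\cdot) \in \Ma_{p}(X)$ with norm at most $C$. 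Putting (i) and (ii) together yields the set equality, and the reverse inequality from (i) then upgrades this to equality of norms. I do not expect a serious obstacle: the only non-formal inputs are the Poisson representation of bounded holomorphic functions on a half-plane and the a.e.\ existence of boundary traces \eqref{fm.e.trace}, both classical; the mild care required is in the $p=\infty$ endpoint (test with point evaluations composed with functionals rather than by $\Ell^{p'}$-duality) and in the routine justification of commuting $T_{(\cdot)}\ph$ with the vector-valued integrals and limits.
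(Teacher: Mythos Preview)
Your proof is correct and follows essentially the same route as the paper: for the forward direction both you and the paper express $f(\w'+i\cdot)$ as the convolution of the boundary trace $f(\w+i\cdot)$ with the unit-mass Poisson kernel and use that this does not increase the $\Ma_{p}(X)$-norm; for the converse both use the a.e.\ boundary limit \eqref{fm.e.trace} together with lower semicontinuity of the multiplier norm under pointwise convergence. The only difference is cosmetic: where the paper invokes \cite[Lemma~E.4.1]{Haase1} for the latter step, you spell it out via dominated convergence on $\F^{-1}(f(\w'+i\cdot)\F\ph)$ and Fatou's lemma.
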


\begin{proof} 
Let $\w\in\R$, $p\in[1,\infty]$ and $f\in\AM_{p}(\Ri_{\w})$. For all $\w'>\w$ and $s\in\R$,
\begin{align*}
f(\w'+is)=\frac{\w'-\w}{\pi}\int_{\R}\frac{f(\w-ir)}{(s-r)^{2}+(\w'-\w)^{2}}\,\ud{r}
\end{align*} 
by \cite[Theorem 5.18]{Rosenblum-Rovnyak}. The right-hand
side is the convolution of $f(\w-i\cdot)$ and the Poisson
kernel $P_{\w'-\w}(r):=\frac{\w'-\w}{\pi(r^{2}+(\w'-\w)^{2})}$. Since
$\norm{P_{\w'-\w}}_{\Ell^{1}(\R)}=1$,
\begin{align*} 
\norm{f(\w'+i\cdot)}_{\Ma_{p}(X)}\leq
\norm{f(\w-i\cdot)}_{\Ma_{p}(X)}=\norm{f}_{\AM_{p}^{X}\!(\Ri_{\w})}.
\end{align*} 
The converse follows from \eqref{fm.e.trace} and \cite[Lemma E.4.1]{Haase1}.
\end{proof}

For $\mu\in\eM(\R)$ and $p\in[1,\infty]$ we let $L_{\mu}\in
\La(\Ell^{p}(\R;X))$,
\begin{align}\label{convolution} 
L_{\mu}(f):=\mu\ast f\qquad(f\in \Ell^{p}(\R;X)),
\end{align}
be the convolution operator associated to $\mu$.

\begin{lemma}\label{laplace transform} 
For each $\w\in\R$ the Laplace
transform induces an isometric algebra isomorphism from
$\eM_{\w}(\R_{+})$ onto $\AM_{1}^{\C}(\Ri_{\w}) = \AM_{1}^{X}\!(\Ri_\w)$. Moreover,
\begin{align*}
\norm{\widehat{\mu}}_{\AM_{p}^{X}\!(\Ri_{\w})}=\norm{L_{e_{-\w}\mu}}_{\La(\Ell^{p}(X))}
\end{align*} 
for all $\mu\in\eM_{\w}(\R_{+})$, $p\in[1,\infty]$.
\end{lemma}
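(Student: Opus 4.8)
The plan is to establish the two claims of Lemma~\ref{laplace transform} in sequence: first the isometric algebra isomorphism $\eM_{\w}(\R_{+}) \cong \AM_{1}^{\C}(\Ri_{\w})$, and then the identity relating the $\AM_p^X$-norm to the norm of the scalar convolution operator $L_{e_{-\w}\mu}$ acting on $\Ell^p(\R;X)$. By translating via the multiplicative action of $e_{-\w}$ (which is an isometric isomorphism $\eM_{\w}(\R_{+}) \to \eM(\R_{+})$ and which shifts $\Ri_{\w}$ to $\C_{+}$), I may assume without loss of generality that $\w = 0$, so the task becomes showing that $\mu \mapsto \widehat{\mu}$ maps $\eM(\R_{+})$ isometrically onto $\AM_1^{\C}(\C_+)$.

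For the core step, recall that the boundary trace of $\widehat{\mu}$ on $i\R$ is exactly $\F\mu$, the Fourier transform of $\mu$ viewed as a measure on $\R$ supported in $\R_+$ (using the identification convention fixed in the notation section). The multiplier operator $T_{\F\mu}$ on $\Ell^1(\R;X)$ with symbol $\F\mu$ is, by the convolution theorem, precisely the convolution operator $L_\mu$ from \eqref{convolution}; so $\norm{\widehat{\mu}}_{\AM_1^X} = \norm{L_\mu}_{\La(\Ell^1(\R;X))}$. It is then classical that for a scalar measure $\mu \in \eM(\R)$ one has $\norm{L_\mu}_{\La(\Ell^1(\R;X))} = \norm{\mu}_{\eM(\R)}$ for any Banach space $X$ — the upper bound is Young's inequality, and the lower bound follows by testing against approximate identities $\varphi_n \otimes x$ concentrating at the origin. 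This simultaneously gives the isometry onto its image and shows the image does not depend on $X$; multiplicativity is immediate since $\widehat{\mu \ast \nu} = \widehat{\mu}\,\widehat{\nu}$ and convolution of measures corresponds to composition of convolution operators. For surjectivity onto $\AM_1^{\C}(\C_+)$, suppose $f \in \HT^\infty(\C_+)$ with boundary trace $f(i\cdot) \in \Ma_1(\C)$; then $f(i\cdot) = \F\mu$ for some $\mu \in \eM(\R)$ because $\Ma_1(\C) = \F\eM(\R)$ (the scalar $\Ell^1$-multipliers are exactly the Fourier transforms of bounded measures, see \cite[Appendix E]{Haase1}), and the Hardy-space condition that $f$ extends holomorphically and boundedly to $\C_+$ forces $\supp(\mu) \subseteq \R_+$ by a Paley--Wiener type argument; hence $f = \widehat{\mu}$ with $\mu \in \eM(\R_+)$.

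For the second claim — the formula $\norm{\widehat{\mu}}_{\AM_p^X(\Ri_\w)} = \norm{L_{e_{-\w}\mu}}_{\La(\Ell^p(X))}$ for general $p \in [1,\infty]$ — the same convolution-theorem identification applies verbatim: after the reduction to $\w=0$, the analytic multiplier norm of $\widehat\mu$ is by definition the $\Ma_p(X)$-norm of its boundary trace $\F\mu$, and $T_{\F\mu} = L_\mu$ on $\Ell^p(\R;X)$ by the convolution theorem, valid for all $p$. One should note that for $\mu \in \eM(\R_+)$ the convolution operator $L_\mu$ genuinely maps $\Ell^p(\R;X)$ to itself with the stated norm — here one invokes Lemma~\ref{alternative description} to see that the $\AM_1$ membership already established guarantees $\widehat\mu \in \AM_p^X(\Ri_\w)$, consistent with the contractive embedding $\AM_1 \hookrightarrow \AM_p$ mentioned after \eqref{fm.e.trace}.

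The main obstacle I anticipate is not any single hard estimate but rather the careful bookkeeping of the boundary-trace identifications: making precise that the holomorphic function $\widehat\mu$ on the open half-plane has boundary values on $\w + i\R$ given (almost everywhere, and here in fact everywhere by continuity on $\overline{\Ri_\w}$) by $s \mapsto \F(e_{-\w}\mu)(s)$, and that this is the symbol whose multiplier norm is being computed. The surjectivity argument — extracting from an abstract $\Ell^1$-multiplier the representing measure and then showing its support lies in $\R_+$ — is where one must be most attentive, though all the needed ingredients (the scalar multiplier theorem and Hardy-space boundary behavior) are standard and available in \cite[Appendix E]{Haase1} and \cite{Rosenblum-Rovnyak}. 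Everything else reduces to Young's inequality, approximate identities, and the elementary functorial compatibility of convolution with the multiplier construction.
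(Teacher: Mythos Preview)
Your proposal is correct and follows essentially the same route as the paper's proof: reduce to $\omega=0$ by shifting, identify the boundary trace of $\widehat{\mu}$ with $\F\mu$, use that $\Ma_1(X)=\F\eM(\R)$ isometrically, and conclude the $\Ma_p$ statement via $T_{\F\mu}=L_\mu$. The only cosmetic difference is in the surjectivity step: the paper phrases the argument that $\supp(\mu)\subseteq\R_+$ as ``an application of Liouville's theorem'' (splitting $\mu$ into its parts on $\R_+$ and $(-\infty,0)$ and obtaining a bounded entire function), whereas you call it a Paley--Wiener type argument---these are the same reasoning under different names.
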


\begin{proof} 
The mappings $\mu\mapsto e_{-\w}\mu$ and $f\mapsto
f(\cdot+\w)$ are isometric algebra isomorphisms $\eM_{\w}(\R_{+}) \to
\eM(\R_{+})$ and $\AM_{p}(\Ri_{\w}) \to \AM_{p}(\C_{+})$,
respectively. Hence it suffices to let $\w=0$. The
Fourier transform induces an isometric isomorphism from $\eM(\R)$ onto
$\Ma_{1}(X)$ \cite[p.347, 8.]{Haase1}. If $\mu\in\eM(\R_{+})$
and $f=\widehat{\mu}\in\HT^{\infty}\!(\C_{+})$ then
$f(i\cdot)=\F\mu\in\Ma_{1}(X)$ with
$\norm{f(i\cdot)}_{\Ma_{1}(X)}=\norm{\mu}_{\eM(\R_{+})}$.
Moreover, for $p\in[1,\infty]$,
\begin{align*} 
\norm{f(i\cdot)}_{\Ma_{p}(X)}=\sup_{\norm{g}_{p}\leq
1}\norm{\mathcal{F}^{-1}(f(i\cdot)\mathcal{F}g)}_{p}=\sup_{\norm{g}_{p}\leq
1}\norm{\mu\ast g}_{p}=\norm{L_{\mu}}_{\La(\Ell^{p}(X))}
\end{align*} 
If $f\in\AM_{1}\!(\C_{+})$ then $f(i\cdot)=\F\mu$ for some
$\mu\in\eM(\R)$. An application of Liouville's theorem shows that
$\supp(\mu)\subseteq \R_{+}$, hence $f=\widehat{\mu}$.
\end{proof}

\subsection{Functional calculus}
\label{Functional calculus} 

We assume that the reader is familiar with the basic notions and results of
the theory of $C_0$-semigroups as developed, e.g., in
\cite{Engel-Nagel}, and just recall some facts which will be needed 
in this article.
 
Each $C_0$-semigroup $T = (T(t))_{t\in \R_+}$ on a Banach space $X$
has \emph{type} $(M,\w)$ for some $M\geq1$ and $\w\in\R$, which means
that $\norm{T(t)}\leq Me^{\w t}$ for all $t\geq 0$. 
The \emph{generator} of $T$ is the
unique closed operator $-A$ such that
\begin{align*} 
(\lambda+A)^{-1}x=\int_{0}^{\infty}e^{-\lambda
t}T(t)x\,\ud{t}\qquad(x\in X)
\end{align*} 
for $\Real(\lambda)$ large.
The \emph{Hille-Phillips (functional) calculus} for $A$ is 
defined as follows. Fix
$M\geq 1$ and $\w_{0}\in\R$ such that $T$ has type $(M,-\w_{0})$. For
$\mu\in\eM_{\w_{0}}(\R_{+})$ define $T_{\mu}\in\La(X)$ by
\begin{align}\label{definition operator}
T_{\mu}x:=\int_{0}^{\infty}T(t)x\,\mu(\ud{t})\qquad(x\in X).
\end{align} 
For $f=\widehat{\mu}\in\AM_{1}\!(\Ri_{\w_{0}})$ set
$f(A):=T_{\mu}$. (This is allowed by the injectivity of the Laplace transform,
see Lemma \ref{laplace transform}.) The mapping $f\mapsto f(A)$ is
an algebra homomorphism. In a second step
the definition of $f(A)$ is extended to a larger class of functions via
\emph{regularization}, i.e., 
\begin{align*} 
f(A):=e(A)^{-1}(ef)(A)
\end{align*} 
if there exists $e\in\AM_{1}\!(\Ri_{\w_{0}})$ such that 
$e(A)$ is injective and $ef\in\AM_{1}\!(\Ri_{\w_{0}})$. Then $f(A)$ is a
closed and (in general) unbounded operator on $X$ and the definition
of $f(A)$ is independent of the choice of regularizer $e$. 
The following lemma shows in particular that for $\w < \w_0$ the operator
$f(A)$ is defined for all $f\in\HT^{\infty}\!(\Ri_{\w})$ by virtue of the regularizer
$e(z) = (z - \lambda)^{-1}$, where  $\Real(\lambda)< \w$.

\begin{lemma}\label{decay implies laplace transform} 
Let $\alpha>\frac{1}{2}$, $\lambda\in \C$ and $\w,\w_{0}\in\R$ with $\Real(\lambda)<\w<\w_{0}$.  Then 
\begin{align*} 
f(z)(z-\lambda)^{-\alpha} \in\AM_{1}\!(\Ri_{\w_{0}}) 
\quad \text{for all $f\in\HT^{\infty}\!(\Ri_{\w})$}.
\end{align*}
\end{lemma}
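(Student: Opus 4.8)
The plan is to realise $g(z):=f(z)(z-\lambda)^{-\alpha}$ as the Laplace--Stieltjes transform of a measure in $\eM_{\w_{0}}(\R_{+})$ and then to invoke Lemma \ref{laplace transform}, which identifies $\AM_{1}\!(\Ri_{\w_{0}})$ isometrically with $\eM_{\w_{0}}(\R_{+})$.

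First I would record that $g\in\HT^{\infty}\!(\Ri_{\w})$. Since $\Real(\lambda)<\w$, for $z\in\Ri_{\w}$ one has $\Real(z-\lambda)>\w-\Real\lambda>0$, so $z\mapsto(z-\lambda)^{-\alpha}$ (principal branch) is holomorphic on $\Ri_{\w}$ with $\abs{(z-\lambda)^{-\alpha}}=\abs{z-\lambda}^{-\alpha}\le(\w-\Real\lambda)^{-\alpha}$ there; multiplying by the bounded holomorphic function $f$ keeps $g$ bounded and holomorphic on $\Ri_{\w}$. The crucial observation is that $g$ in fact lies in the Hardy space $\HT^{2}(\Ri_{\w})$: for $\w'\ge\w$ and $s\in\R$ one has $\abs{g(\w'+is)}\le\norm{f}_{\infty}\bigl((\w'-\Real\lambda)^{2}+(s-\Imag\lambda)^{2}\bigr)^{-\alpha/2}$, and a change of variables together with $\alpha>\tfrac12$ gives $\int_{\R}\abs{g(\w'+is)}^{2}\,\ud s\le\norm{f}_{\infty}^{2}(\w-\Real\lambda)^{1-2\alpha}\int_{\R}(1+v^{2})^{-\alpha}\,\ud v<\infty$, a bound uniform in $\w'\ge\w$ (indeed decreasing in $\w'$). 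By the Paley--Wiener theorem for the half-plane, applied to $z\mapsto g(z+\w)$ on $\C_{+}$ (cf.\ classical Hardy space theory as in \cite{Rosenblum-Rovnyak} or \cite[Appendix E]{Haase1}), there is $k\in\Ell^{2}(\R_{+})$ with $g(z)=\int_{0}^{\infty}e^{-zt}e^{\w t}k(t)\,\ud t$ for $z\in\Ri_{\w}$; equivalently $g=\widehat{\mu}$ for the measure $\mu(\ud t)=e^{\w t}k(t)\,\ud t$.

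It then remains to check that $\mu\in\eM_{\w_{0}}(\R_{+})$, and this is where the hypothesis $\w<\w_{0}$ enters: $e_{-\w_{0}}\mu(\ud t)=e^{(\w-\w_{0})t}k(t)\,\ud t$ and, since $\w-\w_{0}<0$, Cauchy--Schwarz yields $\int_{0}^{\infty}e^{(\w-\w_{0})t}\abs{k(t)}\,\ud t\le\bigl(2(\w_{0}-\w)\bigr)^{-1/2}\norm{k}_{\Ell^{2}(\R_{+})}<\infty$, so $e_{-\w_{0}}\mu\in\Ell^{1}(\R_{+})\subseteq\eM(\R_{+})$, i.e.\ $\mu\in\eM_{\w_{0}}(\R_{+})$. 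Lemma \ref{laplace transform} now gives $f(z)(z-\lambda)^{-\alpha}=\widehat{\mu}\in\AM_{1}^{\C}(\Ri_{\w_{0}})=\AM_{1}\!(\Ri_{\w_{0}})$, which is the assertion (and, as a byproduct, one reads off the estimate $\norm{f(z)(z-\lambda)^{-\alpha}}_{\AM_{1}\!(\Ri_{\w_{0}})}\le C_{\alpha,\lambda,\w,\w_{0}}\norm{f}_{\infty}$).

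I expect the only genuine work to lie in the Paley--Wiener step --- the bookkeeping of the shift by $\w$ and the verification of the uniform $\HT^{2}$-bound on all lines $\Real z=\w'\ge\w$; the concluding Cauchy--Schwarz estimate, which is precisely the place where both $\alpha>\tfrac12$ (for the $\HT^{2}$-membership) and $\w<\w_{0}$ (for the exponential weight) are used, is elementary.
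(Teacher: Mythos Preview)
Your proof is correct and follows essentially the same route as the paper's: show that $g=f(z)(z-\lambda)^{-\alpha}$ lies in $\HT^{2}$ of the (shifted) half-plane, invoke Paley--Wiener to write $g$ as the Laplace transform of an $\Ell^{2}(\R_{+})$-function, and then use Cauchy--Schwarz with the exponential weight $e^{(\w-\w_{0})t}$ to land in $\eM_{\w_{0}}(\R_{+})$, whence Lemma~\ref{laplace transform} applies. The only cosmetic difference is that the paper first normalises to $\w=0$, while you carry the shift along explicitly and are a bit more careful about verifying the uniform $\HT^{2}$-bound over all vertical lines.
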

\begin{proof} 
After shifting we may suppose that $\w=0$. Set
$h(z):=f(z)(z-\lambda)^{-\al}$ for $z\in \C_{+}$. Then $h(i\cdot)\in
\Ell^{2}(\R)$ with
\begin{align*} \norm{h(i\cdot)}_{\Ell^{2}(\R)}^{2}\leq
\int_{\R}\frac{\abs{f(is)}^{2}}{\abs{is-\lambda}^{2\al}}\,\ud{s}\leq
\norm{f}_{\HT^{\infty}\!(\C_{+})}^{2}\int_{\R}\frac{1}{\abs{is-\lambda}^{2\al}}\,\ud{s},
\end{align*} 
hence the Paley-Wiener Theorem \cite[Theorem
5.28]{Rosenblum-Rovnyak} implies that $h=\widehat{g}$ for some $g\in
\Ell^{2}(\R_{+})$. Then $e_{-\w_{0}}g\in \Ell^{1}(\R_{+})$ and
$\widehat{e_{-\w_{0}}g}(z)=h(z+\w_{0})$ for $z\in \C_{+}$. Lemma
\ref{laplace transform} yields $h\in \AM_{1}\!(\Ri_{\w_{0}})$ with
\begin{align*}
\norm{h}_{\AM_{1}\!(\Ri_{\w_{0}})}=\norm{h(\cdot+\w_{0})}_{\AM_{1}\!(\C_{+})}=\norm{e_{-\w_{0}}g}_{\Ell^{1}(\R_{+})}.
\end{align*}
\end{proof}

The Hille--Phillips calculus is an extension of the holomorphic
functional calculus for the operators of half-plane type discussed in  
\cite{Batty-Haase-Mubeen}. An
operator $A$ is \emph{of half-plane type} $\w_{0}\in\R$ if
$\sigma(A)\subseteq\overline{\Ri_{\w_{0}}}$ with
\begin{align*} 
\sup_{\lambda\in \C\setminus
\Ri_{\w}}\norm{R(\lambda,A)}<\infty\qquad \text{for all $\w<\w_{0}$}.
\end{align*} 
One can associate operators
$f(A)\in\La(X)$ to certain elementary functions via Cauchy integrals
and regularize as above to extend the definition to all $f\in
\HT^{\infty}\!(\Ri_{\w})$. If $-A$ generates a $C_{0}$-semigroup of
type $(M,-\w_{0})$ then $A$ is of half-plane type $\w_{0}$, and by
combining \cite[Proposition 2.8]{Batty-Haase-Mubeen} and
\cite[Proposition 3.3.2]{Haase1} one sees that for $\w<\w_{0}$ and
$f\in\HT^{\infty}\!(\Ri_{\w})$ the definitions of $f(A)$ via the
Hille-Phillips calculus and the half-plane calculus coincide.

For a proof of the next, fundamental, lemma
see \cite[Theorem 3.1]{Batty-Haase-Mubeen}.

\begin{lemma}[Convergence Lemma]\label{convergence lemma} 
Let $A$ be a
densely defined operator of half-plane type $\w_{0}\in\R$ on a Banach
space $X$. Let $\w<\w_{0}$ and $(f_{j})_{j\in J}\subseteq
\HT^{\infty}\!(\Ri_{\w})$ be a net satisfying the following conditions:
\begin{enumerate}
\item[\rm 1)] $\sup\left\{\abs{f_{j}(z)}\mid z\in \Ri_{\w},j\in
J\right\}<\infty$;
\item[\rm 2)] $f_{j}(A)\in\La(X)$ for all $j\in J$ and $\sup_{j\in
J}\norm{f_{j}(A)}<\infty$;
\item[\rm 3)] $f(z):=\lim_{j\in J}f_{j}(z)$ exists for all $z\in \Ri_{\w}$.
\end{enumerate} Then $f\in \HT^{\infty}\!(\Ri_{\w})$, $f(A)\in\La(X)$,
$f_{j}(A)\rightarrow f(A)$ strongly and
\begin{align*} \norm{f(A)}\leq \limsup_{j\in J} \norm{f_{j}(A)}.
\end{align*}
\end{lemma}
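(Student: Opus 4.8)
The plan is to reduce the assertion to a resolvent‑level statement and then bootstrap via the regularization procedure that defines the half‑plane calculus. First I would fix $\lambda \in \C$ with $\Real(\lambda) < \w$, and put $e(z) := (z-\lambda)^{-1}$, which belongs to $\HT^{\infty}\!(\Ri_{\w})$ and is a valid regularizer: by Lemma~\ref{decay implies laplace transform} (applied with exponent $\alpha = 1$, say, after a harmless shift so that $e$ together with each bounded $f_j$ lands in some $\AM_{1}\!(\Ri_{\w_0})$), the products $e f_j$ and $e f$ lie in the primary calculus domain, and $e(A) = R(\lambda, A)$ is injective because $A$ is densely defined of half‑plane type (so $\lambda \in \rho(A)$ and the resolvent has trivial kernel). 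Thus $f_j(A) = e(A)^{-1} (e f_j)(A)$ and $f(A) = e(A)^{-1}(e f)(A)$, provided we first know $f \in \HT^\infty\!(\Ri_\w)$ — but that is immediate from hypotheses 1) and 3) together with Vitali's theorem (locally bounded pointwise‑convergent holomorphic functions converge locally uniformly to a holomorphic limit, with the sup bound passing to $f$).

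The heart of the argument is then to show $(e f_j)(A) \to (e f)(A)$ in operator norm, or at least strongly, with the limit controlled. Here I would use an integral representation for the primary functional calculus: for a function $g \in \AM_{1}\!(\Ri_{\w_0})$ of the form $g = \widehat{\mu}$ one has $g(A) = T_\mu = \int_0^\infty T(t)\,\mu(\ud t)$. The point is that multiplication by $e(z)$ produces enough decay that $e f_j$ corresponds to a measure (indeed an $\Ell^1$‑density) $\mu_j$ on $\R_+$, and one wants $\mu_j \to \mu$ in a sense strong enough to pass to the limit in $T_{\mu_j} x = \int_0^\infty T(t) x\, \mu_j(\ud t)$ for each fixed $x \in X$. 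Concretely, writing $e f_j = \widehat{g_j}$ with $g_j \in \Ell^1_{loc}$, the uniform bound in 1) gives a uniform $\Ell^2$‑bound on the boundary traces $(e f_j)(\w + i\cdot)$ via the Paley–Wiener estimate in the proof of Lemma~\ref{decay implies laplace transform}; pointwise convergence of $f_j$ on the line $\w + i\R$ plus dominated convergence then yields $(e f_j)(\w+i\cdot) \to (ef)(\w+i\cdot)$ in $\Ell^2(\R)$, hence $g_j \to g$ in $\Ell^2(\R_+)$ by Plancherel, hence (after the exponential shift) in $\Ell^1$ on compacts with a uniform tail bound, which suffices to conclude $T_{\mu_j} x \to T_\mu x$ for all $x$ by splitting the integral $\int_0^\infty T(t)x\,g_j(t)\,\ud t$ at a large time $R$ and using $\|T(t)\| \le M e^{-\w_0 t}$ on the tail.

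Once $(e f_j)(A) \to (e f)(A)$ strongly with $\sup_j \|(e f_j)(A)\| < \infty$ (the latter from hypothesis 2) and boundedness of $e(A)$), one applies $e(A)^{-1}$: for $x \in \ran(e(A)) = \dom(A) \cap \ldots$, say $x = e(A)y$, we get $f_j(A) x = (e f_j)(A) y \to (e f)(A) y = f(A) x$. Since $\dom(e(A)^{-1})$ here is all of $X$ when $f(A)$ turns out bounded — but a priori we only control $f(A)$ on a core — the cleanest route is: hypothesis 2) gives $\sup_j \|f_j(A)\| =: C_0 < \infty$; for $x$ in the dense set $\ran(R(\lambda,A)^{2})$ (a core), $f_j(A) x$ converges, and since $\|f_j(A)\| \le C_0$ uniformly, the convergence $f_j(A) x \to $ (some limit) $= f(A) x$ extends to all $x \in X$ by density, forcing $f(A) \in \La(X)$ with $\|f(A)\| \le \liminf_j \|f_j(A)\| \le \limsup_j \|f_j(A)\|$, and $f_j(A) \to f(A)$ strongly.

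\textbf{Main obstacle.} The delicate point is the passage $(ef_j)(A) \to (ef)(A)$: one must upgrade pointwise convergence of the symbols on the open half‑plane to a mode of convergence of the associated measures/kernels that survives integration against the (merely strongly continuous, possibly unbounded‑in‑the‑$\Real(\lambda)<\w_0$‑sense) semigroup. The Paley–Wiener $\Ell^2$ control from Lemma~\ref{decay implies laplace transform} is exactly the tool that makes this work, but marrying the $\Ell^2$ estimate on the boundary with an $\Ell^1$‑type tail estimate needed for the semigroup integral — and making sure the exponential weight $e_{-\w_0}$ is handled consistently throughout — is where the real care is required. (In the paper this lemma is cited to \cite[Theorem 3.1]{Batty-Haase-Mubeen}, where presumably a Cauchy‑integral representation of the half‑plane calculus is used in place of the Hille–Phillips integral, which streamlines precisely this step.)
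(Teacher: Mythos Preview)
The paper does not give its own proof here; it simply cites \cite[Theorem~3.1]{Batty-Haase-Mubeen}. Your sketch, however, has a structural gap: the lemma is stated for an \emph{arbitrary} densely defined operator of half-plane type $\w_0$, yet your argument relies throughout on the Hille--Phillips representation $g(A)x=\int_0^\infty T(t)x\,\mu(\ud t)$ and the bound $\|T(t)\|\le M e^{-\w_0 t}$. Half-plane type is purely a resolvent condition and does not force $-A$ to generate a $C_0$-semigroup (indeed, Theorem~\ref{main result on m-bounded calculus} is partly about characterizing when it does). So as written your proof covers only the special case of semigroup generators; for the lemma as stated, there is no $T(t)$ to integrate against.

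The fix is exactly what you anticipate in your closing parenthetical: replace the Hille--Phillips integral by the Cauchy-integral representation that underlies the half-plane calculus. With a second-order regularizer $e(z)=(z-\lambda)^{-2}$ and a vertical contour $\Gamma=\w'+i\R$, $\w<\w'<\w_0$, one has
\[
(e f_j)(A)=\frac{1}{2\pi i}\int_{\Gamma}\frac{f_j(z)}{(z-\lambda)^{2}}\,R(z,A)\,\ud z,
\]
and the integrand is dominated uniformly in $j$ by the integrable function $C\,|z-\lambda|^{-2}\sup_{\Gamma}\|R(\cdot,A)\|$, thanks to hypothesis~1) and the half-plane resolvent bound. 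Dominated convergence then yields $(e f_j)(A)\to(e f)(A)$ in operator norm directly --- no Paley--Wiener, no $\Ell^2$-to-$\Ell^1$ tail juggling, no semigroup. Your final step (uniform bound from hypothesis~2) plus convergence on the dense range of the regularizer) then closes the argument as you describe. In short: the skeleton is right, but the integral representation you chose is unavailable at the stated level of generality.
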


Let $A$ be an operator of half-plane type $\w_{0}$ and
$\w<\w_{0}$. For a Banach algebra $F$ of functions continuously
embedded in $\HT^{\infty}\!(\Ri_{\w})$, we say that $A$ has a
\emph{bounded $F$-calculus} if there exists a constant $C\geq 0$ such
that $f(A)\in\La(X)$ with
\begin{align}\label{bounded calculus} 
\norm{f(A)}_{\La(X)}\leq C\norm{f}_{F}\qquad \text{for all $f\in F$}.
\end{align} 

The operator $-A$ generates a $C_0$-semigroup
$(T(t))_{t\in\R_{+}}$ of type $(M, \omega)$ if and only if $-(A + \omega)$
generates the semigroup $(e^{-\omega t}T(t))_{t\in\R_{+}}$ of type $(M,
0)$. The functional calculi for $A$ and $A+ \omega$ are linked by the
simple composition rule ``$f(A + \omega) = f(\omega + z)(A)$'' 
\cite[Theorem 2.4.1]{Haase1}. 
Henceforth we shall mainly consider bounded semigroups; all results
carry over to general semigroups by shifting.

\section{Functional calculus for semigroup generators}
\label{functional calculus for semigroup generators}

Define the function $\eta:(0,\infty)\times(0,\infty)\times
[1,\infty]\rightarrow \R_{+}$ by
\begin{align}\label{definition log-quantity}
\eta(\alpha,t,q):=\inf\left\{\norm{\psi}_{q}\norm{\ph}_{q'}\mid
\psi\ast\ph\equiv e_{-\alpha}\textrm{ on }[t,\infty)\right\}.
\end{align} 
 
The set on the right-hand side is not empty: choose for instance
$\psi:=\mathbf{1}_{[0,t]}e_{-\alpha}$ and
$\ph:=\frac{1}{t}e_{-\alpha}$. By \ref{growth estimate lemma},
\begin{align*}
\eta(\alpha,t,q)=O(\abs{\log(\alpha t)})\qquad\text{as $\alpha t\rightarrow 0$},
\end{align*}
for $q\in(1,\infty)$. 

For the following result recall the
definitions of the operators $L_{\mu}$ from \eqref{convolution} and
$T_{\mu}$ from \eqref{definition operator}.

\begin{proposition}\label{transference} 
Let $(T(t))_{t\in\R_{+}}$ be a
$C_{0}$-semigroup of type $(M,0)$ on a Banach space $X$. Let
$p\in[1,\infty]$, $\tau,\w>0$ and $\mu\in\eM_{-\w}(\R_{+})$ with
$\supp(\mu)\subseteq [\tau,\infty)$. Then
\begin{align}\label{transference estimate} 
\norm{T_{\mu}}_{\La(X)}\leq
M^{2}\eta(\w,\tau,p)\norm{L_{e_{\w}\mu}}_{\La(\Ell^{p}(X))}.
\end{align}
\end{proposition}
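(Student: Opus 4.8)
\textbf{Proof plan for Proposition \ref{transference}.}
The plan is to realize $T_\mu$ as a compression of a convolution operator on $\Ell^p(\R;X)$, following the transference philosophy of \cite{Haase6}. First I would fix a factorization $\psi \ast \ph \equiv e_{-\w}$ on $[\tau,\infty)$ with $\norm{\psi}_p\norm{\ph}_{p'}$ close to $\eta(\w,\tau,p)$; such a pair exists by the remark following \eqref{definition log-quantity}. Since $\supp(\mu)\subseteq[\tau,\infty)$ and $e_{-\w}\mu$ is the object controlled by the hypothesis, the identity $\psi\ast\ph\equiv e_{-\w}$ on the support of $\mu$ means that, when we convolve against $\mu$, we may freely insert the factorization: morally $\mu = (e_\w\mu)\cdot(\psi\ast\ph)$ on $[\tau,\infty)$, i.e.\ $e_\w\mu$ gets "split" through the two $\Ell^p$/$\Ell^{p'}$ halves.

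The core computation I would carry out is a sandwich estimate. For $x\in X$ and $x^*\in X^*$, write
\begin{align*}
\langle T_\mu x, x^*\rangle = \int_0^\infty \langle T(t)x, x^*\rangle\,\mu(\ud t),
\end{align*}
and then use the semigroup law $T(t) = T(t-s)T(s)$ together with the factorization to rewrite this (after a change of variables encoding $\psi\ast\ph$) as a double integral in which one variable is paired with $\psi$ and integrated against $T(\cdot)x$, and the other is paired with $\ph$ and integrated against $T(\cdot)^*x^*$. Concretely I would introduce the $X$-valued function $F(s) := \mathbf 1_{\R_+}(s)\,T(s)x$ and note $\norm{F}_{\Ell^p(\R;X)}$ is controlled by $M\norm{x}$ only after inserting the right $\Ell^p$-weight — here is where the decomposition $e_\w\mu = \psi\ast(\text{something})$ must be arranged so that the "bad" part of $T(s)$ (its lack of decay) is absorbed by $\psi\in\Ell^p$, and symmetrically on the dual side by $\ph\in\Ell^{p'}$. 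The upshot is an estimate of the form
\begin{align*}
\abs{\langle T_\mu x, x^*\rangle} \le M^2\,\norm{\psi}_p\,\norm{\ph}_{p'}\,\norm{L_{e_\w\mu}}_{\La(\Ell^p(X))}\,\norm{x}\,\norm{x^*},
\end{align*}
where the factor $\norm{L_{e_\w\mu}}$ enters because the "middle" operator in the sandwich is exactly convolution by $e_\w\mu$ acting on an $\Ell^p(\R;X)$-function built from the $T(\cdot)x$ orbit, and the two factors of $M$ come from the two one-sided uniform bounds $\norm{T(s)}\le M$ used to pass from the orbit functions to their $\Ell^p$/$\Ell^{p'}$ norms. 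Taking the infimum over admissible $(\psi,\ph)$ then produces $\eta(\w,\tau,p)$.

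The main obstacle, and the step that needs genuine care rather than routine manipulation, is the bookkeeping in the change of variables: one must verify that the identity $\psi\ast\ph\equiv e_{-\w}$ \emph{only on $[\tau,\infty)$} (not on all of $\R_+$) suffices, which works precisely because $\supp(\mu)\subseteq[\tau,\infty)$, so the values of $\psi\ast\ph$ off $[\tau,\infty)$ never get tested against $\mu$. Equally delicate is choosing the correct weights: the convolution operator that appears is $L_{e_\w\mu}$ (with the $+\w$ weight, cancelling the $-\w$ weight built into $\mu\in\eM_{-\w}(\R_+)$), and one must track the $e_{\pm\w s}$ factors through the semigroup law so that they land consistently on the $\psi$-side and the $\ph$-side. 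I would also need the density of $X$-valued Schwartz (or simple) functions, together with the fact that $L_{e_\w\mu}$ extends boundedly to $\Ell^p(\R;X)$ by hypothesis, to make the formal sandwich rigorous and to reduce to functions for which all the integrals converge absolutely; once those reductions are in place, the estimate is obtained by Hölder's inequality in the two outer integrals.
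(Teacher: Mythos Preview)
Your proposal is correct and follows essentially the same transference argument as the paper, with one cosmetic difference: the paper does not pass through the dual pairing $\langle T_\mu x, x^*\rangle$ but instead writes down the operator factorization $T_\mu = P \circ L_{e_\w\mu} \circ \iota$ directly, with $\iota x(s) = \psi(-s)T(-s)x$ on $s\le 0$ and $Pf = \int_0^\infty \ph(t)T(t)f(t)\,\ud t$; the estimates $\norm{\iota}\le M\norm{\psi}_p$ and $\norm{P}\le M\norm{\ph}_{p'}$ then follow immediately from H\"older without invoking $X^*$. Your weak-form computation unwinds to exactly this factorization (the ``$X$-valued function built from the $T(\cdot)x$ orbit'' is precisely $\iota x$), so the content is the same; the paper's formulation is slightly cleaner in that it avoids the duality bookkeeping and the edge cases $p\in\{1,\infty\}$ where $(\Ell^p)^* = \Ell^{p'}$ may fail for vector-valued spaces.
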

\begin{proof} 
We can factorize $T_\mu$ as $T_{\mu}=P\circ L_{e_{\w}\mu}\circ
\iota$, where
\begin{itemize}
\item $\iota:X\rightarrow \Ell^{p}(\R;X)$ is given by
\begin{align*} 
\iota(x)(s):= \begin{cases} \psi(-s)T(-s)x &
\text{if}\,\, s \leq 0\\ 0 & \text{if}\,\, s > 0
\end{cases} \qquad (x\in X).
\end{align*} 
\item $P:\Ell^{p}(\R;X)\rightarrow X$ is given by
\begin{align*} 
Pf:=\int_{0}^{\infty}\ph(t)T(t)f(t)\,\ud{t}\qquad
(f\in \Ell^{p}(\R;X)).
\end{align*}
\item $\psi\in \Ell^{p}(\R_{+})$ and $\ph\in \Ell^{p'}(\R_{+})$ are
such that $\psi\ast\ph\equiv e_{-\w}$ on $[\tau,\infty)$.
\end{itemize} This is deduced as in the transference principle from
\cite[Section 2]{Haase6}, using that
$\mu=(\psi\ast\ph)e_{\w}\mu$. H\"{o}lder's inequality then implies
\begin{align*} \norm{T_{\mu}}\leq M^{2}\norm{\psi}_{p}
\norm{L_{e_{\w}\mu}}_{\La(\Ell^{p}(X))}\norm{\ph}_{p'},
\end{align*} and taking the infimum over all such $\psi$ and $\ph$
yields \eqref{transference estimate}.
\end{proof}

Now define, for a Banach space $X$, $\w\in\R$, $p\in[1,\infty]$ and $\tau>0$, the space
\begin{align*}
\AM_{p,\tau}^{X}\!(\Ri_{\w}):=\left\{f\in\AM_{p}^{X}\!(\Ri_{\w})\mid
f(z)=O(e^{-\tau\Real(z)}) \textrm{ as } \abs{z}\rightarrow
\infty\right\},
\end{align*} 
endowed with the norm of $\AM_{p}^{X}\!(\Ri_{\w})$.

\begin{lemma}\label{functions of exponential decay} 
For every Banach space $X$, 
$\w\in\R$, $p\in[1,\infty]$ and $\tau>0$ 
\begin{align}\label{description ideal}
\AM_{p,\tau}^{X}\!(\Ri_{\w})=\AM_{p}^{X}\!(\Ri_{\w})\cap
e_{-\tau}\HT^{\infty}\!(\Ri_{\w})=e_{-\tau}\AM_{p}^{X}\!(\Ri_{\w}).
\end{align} 
In particular, $\AM_{p,\tau}^{X}\!(\Ri_{\w})$ is a
closed ideal in $\AM_{p}^{X}\!(\Ri_{\w})$.
\end{lemma}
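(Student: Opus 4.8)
The equality \eqref{description ideal} consists of two set equalities. I would establish them by a chain of inclusions, exploiting the fact — recorded just before the lemma — that $\AM_p^X(\Ri_\w)$ is a Banach algebra contractively embedded in $\HT^\infty(\Ri_\w)$, together with the observation that $e_{-\tau}(z) = e^{-\tau z}$ lies in $\HT^\infty(\Ri_\w)$ for $\tau > 0$ and is multiplicative: $e_{-\tau}e_{-\sigma} = e_{-(\tau+\sigma)}$. The key structural point is that $e_{-\tau}$ is a \emph{multiplier} of the algebra — this is exactly Lemma~\ref{laplace transform} applied to the Dirac mass $\delta_\tau \in \eM_{\w}(\R_+)$ (after the usual shift), whose Laplace transform is $\widehat{\delta_\tau} = e_{-\tau} \in \AM_1^X(\Ri_\w) \subseteq \AM_p^X(\Ri_\w)$. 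So multiplication by $e_{-\tau}$ is a bounded operator on $\AM_p^X(\Ri_\w)$, which makes $e_{-\tau}\AM_p^X(\Ri_\w)$ a well-defined subspace; in fact it corresponds under the functional calculus to translation.

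First I would show $e_{-\tau}\AM_p^X(\Ri_\w) \subseteq \AM_p^X(\Ri_\w) \cap e_{-\tau}\HT^\infty(\Ri_\w)$. Inclusion into $e_{-\tau}\HT^\infty(\Ri_\w)$ is immediate from the contractive embedding $\AM_p^X(\Ri_\w) \hookrightarrow \HT^\infty(\Ri_\w)$; inclusion into $\AM_p^X(\Ri_\w)$ follows because $e_{-\tau} \in \AM_p^X(\Ri_\w)$ and the latter is an algebra. Next, the inclusion $\AM_p^X(\Ri_\w) \cap e_{-\tau}\HT^\infty(\Ri_\w) \subseteq \AM_{p,\tau}^X(\Ri_\w)$: if $f = e_{-\tau}g$ with $g \in \HT^\infty(\Ri_\w)$, then $\abs{f(z)} \le \norm{g}_\infty e^{-\tau\Real(z)}$ for $z \in \Ri_\w$, so $f$ satisfies the decay condition $f(z) = O(e^{-\tau\Real(z)})$, and by hypothesis $f \in \AM_p^X(\Ri_\w)$ already. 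Finally, and this is the crux, I need $\AM_{p,\tau}^X(\Ri_\w) \subseteq e_{-\tau}\AM_p^X(\Ri_\w)$: given $f \in \AM_p^X(\Ri_\w)$ with $f(z) = O(e^{-\tau\Real(z)})$, I must produce $g \in \AM_p^X(\Ri_\w)$ with $f = e_{-\tau}g$. The natural candidate is $g := e_\tau f$, i.e. $g(z) = e^{\tau z}f(z)$. The decay hypothesis on $f$ guarantees $g \in \HT^\infty(\Ri_\w)$ (it is holomorphic and bounded, since $\abs{g(z)} = e^{\tau\Real(z)}\abs{f(z)}$ stays bounded as $\abs{z}\to\infty$ by assumption, and is clearly bounded on any bounded portion of $\Ri_\w$ — here one uses that $f$ is bounded and $e^{\tau z}$ is bounded on $\w \le \Real z \le R$). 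The real work is showing $g \in \AM_p^X(\Ri_\w)$, i.e. that the boundary trace $g(\w + i\cdot) = e^{\tau\w}e^{i\tau\cdot}f(\w+i\cdot)$ is still an $\Ell^p(\R;X)$-Fourier multiplier.

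**The main obstacle.** This last step is where I expect the difficulty to concentrate. Multiplying the symbol $f(\w+is)$ by $e^{i\tau s}$ is, on the Fourier-multiplier side, nothing but a translation of the convolution kernel/operator by $\tau$ — and translation is an isometry on $\Ell^p(\R;X)$ for $1 \le p < \infty$ (and on $\Ce_0(\R;X)$ for $p=\infty$), so it should commute with, or rather conjugate, $T_m$ into another bounded multiplier operator of the same norm. Concretely: if $m(s) = f(\w+is)$ has $T_m$ bounded, then $e^{i\tau s}m(s)$ has associated operator equal to $\lambda_\tau^{-1} T_m \lambda_\tau$ (or similar) where $\lambda_\tau$ is translation by $\tau$, hence bounded with the same norm. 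I would make this rigorous either by the direct computation with Schwartz functions (tracking how $\F^{-1}(e^{i\tau\cdot}m\,\F\ph)$ relates to a translate of $T_m$ applied to a translate of $\ph$), or — more cleanly — by invoking Lemma~\ref{laplace transform} and the description in Lemma~\ref{alternative description}: since $e_{-\tau} = \widehat{\delta_\tau}$ and the Laplace transform is an \emph{isometric algebra isomorphism} onto $\AM_1$, and since passing between $\AM_p$-norms is controlled by the convolution-operator norm $\norm{L_{e_{-\w}\mu}}$, one can transfer the identity $f = e_{-\tau}\cdot(e_\tau f)$ to the level of convolution operators and read off the multiplier property of $g$. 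I would double-check the edge behaviour near $\partial\Ri_\w$ (the trace $g(\w+is)$ should be exactly $e^{\tau\w}e^{i\tau s}$ times the trace $f(\w+is)$, which is legitimate since $e_\tau$ is continuous up to the boundary) and the $\abs{z}\to\infty$ estimate used to place $g$ in $\HT^\infty$. The statement "$\AM_{p,\tau}^X(\Ri_\w)$ is a closed ideal" then follows formally: it is an ideal because $e_{-\tau}\AM_p^X \cdot \AM_p^X \subseteq e_{-\tau}\AM_p^X$ (the algebra absorbs its own elements), and it is closed because it is the preimage of the closed subspace $e_{-\tau}\HT^\infty(\Ri_\w)$ under the bounded (indeed contractive) inclusion $\AM_p^X(\Ri_\w) \hookrightarrow \HT^\infty(\Ri_\w)$ — using the middle description in \eqref{description ideal} — provided one checks $e_{-\tau}\HT^\infty(\Ri_\w)$ is closed in $\HT^\infty(\Ri_\w)$, which again reduces to the fact that multiplication by the \emph{invertible-in-the-bidual} symbol $e_{-\tau}$, or rather the estimate $\norm{e_\tau h}_\infty \le e^{-\tau\w}\norm{h}_\infty$ available once $h \in e_{-\tau}\HT^\infty$, makes the map $h \mapsto e_\tau h$ bounded between the relevant spaces.
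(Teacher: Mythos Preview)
Your proposal is correct and follows essentially the same route as the paper: the crucial step in both is the identity $\norm{e^{\tau(\w+i\cdot)}f(\w+i\cdot)}_{\Ma_p(X)} = e^{\tau\w}\norm{f(\w+i\cdot)}_{\Ma_p(X)}$, which the paper simply states and you justify via the translation argument, and the closedness argument in both reduces to the Maximum-Principle equality $\norm{e_\tau f_n}_{\HT^\infty(\Ri_\w)} = e^{\tau\w}\norm{f_n}_{\HT^\infty(\Ri_\w)}$ (the paper phrases it as a Cauchy-sequence argument, you as a preimage-of-closed-set argument, but these are the same computation).
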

\begin{proof} 
The first equality in \eqref{description ideal} is
clear, and so is the inclusion $e_{-\tau}\AM_{p}(\Ri_{\w})\subseteq
\AM_{p,\tau}\!(\Ri_{\w})$. Conversely, if $f\in\AM_{p}(\Ri_{\w})\cap
e_{-\tau}\HT^{\infty}\!(\Ri_{\w})$ then $e_{\tau}f\in\AM_{p}(\Ri_{\w})$
since
\begin{align*}
\norm{e^{\tau(\w+i\cdot)}f(\w+i\cdot)}_{\Ma_{p}(X)}=e^{\tau
\w}\norm{f(\w+i\cdot)}_{\Ma_{p}(X)}.
\end{align*} 
Now suppose that $(f_{n})_{n\in\N}\subseteq
\AM_{p,\tau}\!(\Ri_{\w})$ converges to $f\in\AM_{p}(\Ri_{\w})$. The
Maximum Principle implies
$\norm{e_{\tau}f_{n}}_{\HT^{\infty}\!(\Ri_{\w})}=e^{\tau\w}\norm{f_{n}}_{\HT^{\infty}\!(\Ri_{\w})}$,
hence $(e_{\tau}f_{n})_{n\in\N}$ is Cauchy in
$\HT^{\infty}\!(\Ri_{\w})$. Since it converges pointwise to $e_{\tau}f$,
\eqref{description ideal} implies $f\in\AM_{p,\tau}\!(\Ri_{\w})$.
\end{proof}

We are now ready to prove the main result of this section. Note that
the union of the ideals $\AM_{p,\tau}^{X}\!(\Ri_{\w})$ for $\tau>0$ is
dense in $\AM_{p}^{X}\!(\Ri_{\w})$ with respect to pointwise
and bounded convergence of sequences. If there were a single constant independent
of $\tau$ bounding the $\AM_{p,\tau}^{X}\!(\Ri_{\w})$-calculus for all
$\tau$, the Convergence Lemma would imply that $A$ has a bounded
$\AM_{p}^{X}\!(\Ri_{\w})$-calculus, but this is known to be false in general
\cite[Corollary 9.1.8]{Haase1}.

\begin{theorem}\label{main theorem on functional calculus with
exponential decay} 
For each $p\in(1,\infty)$ there exists a constant
$c_{p}\geq 0$ such that the following holds. Let $-A$ generate a
$C_{0}$-semigroup $(T(t))_{t\in\R_{+}}$ of type $(M,0)$ on a Banach space
$X$ and let $\tau,\w>0$. Then $f(A)\in\La(X)$ and
\begin{align*}
 \norm{f(A)} \leq
\begin{cases} c_{p}\,M^{2}\abs{\log(\w
\tau)}\,\norm{f}_{\AM_{p}^{X}} & \text{if}\,\, \w\tau\leq
\text{$\min(\frac{1}{p},\frac{1}{p'})$},\\ 2M^{2}e^{-\w
\tau}\norm{f}_{\AM_{p}^{X}} & \text{if}\,\, \w\tau>
\text{$\min(\frac{1}{p},\frac{1}{p'})$}
\end{cases}
\end{align*} 
for all $f\in \AM_{p,\tau}^{X}\!(\Ri_{-\w})$. In particular, 
$A$ has a bounded $\AM_{p,\tau}^{X}\!(\Ri_{-\w})$-calculus.
\end{theorem}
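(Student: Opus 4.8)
The plan is to combine the transference estimate of Proposition~\ref{transference} with the structural description of the ideal $\AM_{p,\tau}^{X}\!(\Ri_{-\w})$ from Lemma~\ref{functions of exponential decay}, and then optimize the resulting bound using the growth estimate announced for $\eta$. First I would reduce to showing the estimate for $f$ in a dense subclass: since $-A$ generates a $C_0$-semigroup of type $(M,0)$, the operator $A$ is of half-plane type $0$, and by Lemma~\ref{decay implies laplace transform} (with a suitable regularizer $(z-\lambda)^{-\alpha}$) every $f\in\HT^{\infty}\!(\Ri_{-\w})$ already has a well-defined $f(A)$. To set up transference one wants $f$ to be (the Laplace--Stieltjes transform of) a measure in $\eM_{-\w}(\R_{+})$ with support in $[\tau,\infty)$; for such $f=\widehat{\mu}$ we have $f(A)=T_{\mu}$ by definition of the Hille--Phillips calculus.

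The core computation is: given $f\in\AM_{p,\tau}^{X}\!(\Ri_{-\w})$ of this special form, write $\mu\in\eM_{-\w}(\R_{+})$ with $\supp(\mu)\subseteq[\tau,\infty)$ and $f=\widehat{\mu}$. Proposition~\ref{transference} then gives
\begin{align*}
\norm{f(A)}=\norm{T_{\mu}}\le M^{2}\,\eta(\w,\tau,p)\,\norm{L_{e_{\w}\mu}}_{\La(\Ell^{p}(X))}.
\end{align*}
By Lemma~\ref{laplace transform}, $\norm{L_{e_{\w}\mu}}_{\La(\Ell^{p}(X))}=\norm{\widehat{\mu}}_{\AM_{p}^{X}\!(\Ri_{-\w})}=\norm{f}_{\AM_{p}^{X}}$, so $\norm{f(A)}\le M^{2}\eta(\w,\tau,p)\norm{f}_{\AM_{p}^{X}}$. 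It then remains to bound $\eta(\w,\tau,p)$: for $\w\tau\le\min(\tfrac1p,\tfrac1{p'})$ the (forward-referenced) growth lemma gives $\eta(\w,\tau,p)=O(\abs{\log(\w\tau)})$, which produces the constant $c_p$; for $\w\tau>\min(\tfrac1p,\tfrac1{p'})$ one uses the explicit competitor $\psi:=\mathbf 1_{[0,\tau]}e_{-\w}$, $\ph:=\tfrac1\tau e_{-\w}$ from \eqref{definition log-quantity}, whose norms multiply to something bounded by $e^{-\w\tau}$ times an absolute constant (hence the factor $2e^{-\w\tau}$ after a short estimate of the two $\Ell^q$-norms), so that $\eta(\w,\tau,p)\le 2e^{-\w\tau}$ in that regime. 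This yields the stated bound on the special class.

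The remaining step is to pass from the dense special class to all of $\AM_{p,\tau}^{X}\!(\Ri_{-\w})$ via the Convergence Lemma~\ref{convergence lemma}. Here one approximates a general $f\in\AM_{p,\tau}^{X}\!(\Ri_{-\w})$ by functions $f_n$ that are Laplace--Stieltjes transforms of measures supported in $[\tau,\infty)$ and that converge to $f$ pointwise on $\Ri_{-\w}$ with uniformly bounded $\AM_p^X$-norms (equivalently $\HT^\infty$-norms); a standard device is to use the description $\AM_{p,\tau}^{X}=e_{-\tau}\AM_p^{X}$ from Lemma~\ref{functions of exponential decay}, write $f=e_{-\tau}g$ with $g\in\AM_p^{X}\!(\Ri_{-\w})$, regularize $g$ by resolvents and/or convolve with approximate identities / apply Poisson-type means as in Lemma~\ref{alternative description}, so that the approximants land in $\AM_1\!(\Ri_{-\w})=$ Laplace transforms of measures, with the exponential factor $e_{-\tau}$ forcing the support into $[\tau,\infty)$. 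Applying Lemma~\ref{convergence lemma} then transfers the uniform bound to $f(A)$.

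\textbf{Main obstacle.} The conceptually routine part (transference plus Lemma~\ref{laplace transform}) is immediate; the genuine work is twofold. First, the quantitative estimate on $\eta(\w,\tau,p)$ in the small-$\w\tau$ regime — i.e.\ the logarithmic growth — is the heart of the matter and relies on the separately stated growth estimate lemma; one must be careful that its constant depends only on $p$ and not on $\w,\tau$. Second, the density/approximation argument must be done so that the approximating functions genuinely lie in the Hille--Phillips-accessible class $\AM_1$ \emph{and} retain support in $[\tau,\infty)$ \emph{and} have $\AM_p^X$-norms controlled by $\norm{f}_{\AM_p^X}$ (not merely by $\norm{f}_\infty$), so that the Convergence Lemma delivers exactly the asserted constant rather than a worse one; ensuring all three simultaneously is the delicate bookkeeping step.
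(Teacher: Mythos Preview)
Your plan is correct and matches the paper's proof: first treat $f\in\AM_{1,\tau}\!(\Ri_{-\w})$ by writing $f=e_{-\tau}\widehat{\mu}=\widehat{\delta_\tau\ast\mu}$ and applying Proposition~\ref{transference} together with Lemma~\ref{laplace transform}; then approximate a general $f\in\AM_{p,\tau}\!(\Ri_{-\w})$ by the resolvent-plus-shift regularizations $f_{k,\epsilon}(z)=f(z+\epsilon)\,k/(z+\epsilon-\w+k)$ (this is exactly the ``resolvents and Poisson means'' device you sketch), use Lemma~\ref{alternative description} to bound $\norm{f_{k,\epsilon}}_{\AM_p^X}\le\norm{f}_{\AM_p^X}$, and conclude via the Convergence Lemma. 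You have correctly identified the bookkeeping issue that the approximants must stay in $\AM_{1,\tau}$ with controlled $\AM_p^X$-norm.

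There is one concrete error. The explicit competitor $\psi=\mathbf 1_{[0,\tau]}e_{-\w}$, $\ph=\tfrac1\tau e_{-\w}$ does \emph{not} yield $\eta(\w,\tau,p)\le 2e^{-\w\tau}$: a direct computation gives
\[
\norm{\psi}_p\norm{\ph}_{p'}=\Bigl(\tfrac{1-e^{-p\w\tau}}{p\w}\Bigr)^{1/p}\cdot\tfrac{1}{\tau(p'\w)^{1/p'}}\;\asymp\;\tfrac{C_p}{\w\tau}
\]
for large $\w\tau$, which is only polynomial decay, far worse than $e^{-\w\tau}$. The exponential bound in the regime $\w\tau>\min(\tfrac1p,\tfrac1{p'})$ also comes from the growth estimate lemma, which uses a different pair (after normalizing $\tau=1$, take $\ph=\mathbf 1_{[0,1]}e_{\w(p-1)}$ and $\psi\propto\mathbf 1_{\R_+}e_{-\w}$). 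Simply cite the growth lemma for both regimes, as the paper does, and your argument is complete.
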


\begin{proof} 
First consider $f\in \AM_{1,\tau}\!(\Ri_{-\w})$. Let
$\delta_{\tau}\in\eM_{-\w}(\R_{+})$ be the unit point mass at $\tau$. By
Lemmas \ref{functions of exponential decay} and \ref{laplace
transform} there exists $\mu\in\eM_{-\w}(\R_{+})$ such that
$f=e_{-\tau}\widehat{\mu}=\widehat{\delta_{\tau}\ast\mu}$. Since
$\delta_{\tau}\ast\mu\in\eM_{-\w}(\R_{+})$ with
$\supp(\delta_{\tau}\ast\mu)\subseteq [\tau,\infty)$, Proposition
\ref{transference} and Lemma \ref{laplace transform} yield
\begin{align}\label{main functional calculus estimate} 
\norm{f(A)}\leq
M^{2}\eta(\w,\tau,p)\norm{f}_{\AM_{p}^{X}}.
\end{align} 
Now suppose $f\in \AM_{p,\tau}\!(\Ri_{-\w})$ is
arbitrary. For $\epsilon>0$, $k\in\N$ and $z\in\Ri_{-\w}$ set
$g_{k}(z):=\frac{k}{z-\w+k}$ and
$f_{k,\epsilon}(z):=f(z+\epsilon)g_{k}(z+\epsilon)$. Lemma \ref{decay
implies laplace transform} yields $f_{k,\epsilon}\in
\AM_{1,\tau}\!(\Ri_{-\w})$, hence, by what we have already shown,
\begin{align*} 
\norm{f_{k,\epsilon}(A)}\leq
M^{2}\eta(\w,\tau,p)\norm{f_{k,\epsilon}}_{\AM_{p}^{X}}.
\end{align*} 
The inclusion $\AM_{1}\!(\Ri_{-\w})\subseteq
\AM_{p}(\Ri_{-\w})$ is contractive, so Lemma \ref{laplace transform}
implies that $g_{k}\in \AM_{p}(\Ri_{-\w})$ with
\begin{align*}
\norm{g_{k}}_{\AM_{p}^{X}}\leq\norm{g_{k}}_{\AM_{1}}=k\norm{e_{-k}}_{\Ell^{1}(\R_{+})}=1.
\end{align*} 
Combining this with Lemma \ref{alternative description}
yields
\begin{align*} 
\norm{f_{k,\epsilon}}_{\AM_{p}^{X}}&\leq
\norm{f(\cdot+\epsilon)}_{\AM_{p}^{X}}\norm{g_{k}(\cdot+\epsilon)}_{\AM_{p}^{X}}\\
&\leq \norm{f}_{\AM_{p}^{X}}.
\end{align*} 
In particular,
$\sup_{k,\epsilon}\norm{f_{k,\epsilon}}_{\infty}<\infty$
and $\sup_{k,\epsilon}\norm{f_{k,\epsilon}(A)}<\infty$. The
Convergence Lemma \ref{convergence lemma} implies that $f(A)\in
\La(X)$ satisfies \eqref{main functional calculus estimate}. 
\ref{growth estimate lemma} concludes the proof.
\end{proof}

\begin{remark}\label{exponential decay in every direction} 
Because $\AM_{1}\!(\Ri_{-\w})=\AM_{\infty}\!(\Ri_{-\w})$ is contractively embedded
in $\AM_{p}(\Ri_{-\w})$, Theorem \ref{main theorem on functional
calculus with exponential decay} also holds for $p=1$ and
$p=\infty$. However, $A$ trivially has a bounded 
$\AM_{1}$-calculus by Lemma \ref{laplace transform} and 
the Hille-Phillips calculus.

Note that the exponential decay of $\abs{f(z)}$ is
only required as the real part of $z$ tends to infinity. If
$\abs{f(z)}$ decays exponentially as $\abs{z}\rightarrow \infty$ the
result is not interesting, by Lemma \ref{decay implies laplace
transform}.
\end{remark}

We can equivalently formulate Theorem \ref{main theorem on functional
calculus with exponential decay} as a statement about composition with
semigroup operators.

\begin{corollary}\label{composition with semigroup operators} 
Under the assumptions of Theorem \ref{main theorem on functional calculus
with exponential decay}, $f(A)T(\tau)\in \La(X)$ and
\begin{align*} 
\norm{f(A)T(\tau)}\leq \begin{cases} c_{p}\,M^{2}\abs{\log(\w
\tau)}\,e^{\w\tau}\norm{f}_{\AM_{p}^{X}} & \text{if}\,\,
\w\tau\leq \min(\frac{1}{p},\frac{1}{p'}),\\
2M^{2}\norm{f}_{\AM_{p}^{X}} & \text{if}\,\, \w\tau>
\min(\frac{1}{p},\frac{1}{p'})
\end{cases}
\end{align*} 
for all $f\in \AM_{p}^{X}\!(\Ri_{-\w})$.
\end{corollary}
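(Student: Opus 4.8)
The plan is to obtain the corollary as a direct reformulation of Theorem~\ref{main theorem on functional calculus with exponential decay}, the passage between the two statements being multiplication by the function $e_{-\tau}\colon z\mapsto e^{-\tau z}$. In the functional calculus this function corresponds to the semigroup operator $T(\tau)$: indeed $\widehat{\delta_{\tau}}=e_{-\tau}$ and $T_{\delta_{\tau}}=T(\tau)$, so $e_{-\tau}(A)=T(\tau)$. Given $f\in\AM_{p}^{X}\!(\Ri_{-\w})\subseteq\HT^{\infty}\!(\Ri_{-\w})$, the operator $f(A)$ is well defined (e.g.\ by Lemma~\ref{decay implies laplace transform}, using the regularizer $(z-\lambda)^{-\alpha}$), and the product rule for the functional calculus together with the boundedness of $e_{-\tau}(A)=T(\tau)$ gives $(e_{-\tau}f)(A)=f(A)\,e_{-\tau}(A)=f(A)T(\tau)$ as operators defined on all of $X$; since $T(\tau)$ commutes with the resolvents of $A$, hence with $f(A)$, this also equals $T(\tau)f(A)$. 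This identification is the one point requiring a little care, because $f(A)$ need not be bounded a priori; everything else is bookkeeping.

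Next I would record that $e_{-\tau}f\in\AM_{p,\tau}^{X}\!(\Ri_{-\w})$ whenever $f\in\AM_{p}^{X}\!(\Ri_{-\w})$: this is immediate from Lemma~\ref{functions of exponential decay}, which identifies $\AM_{p,\tau}^{X}\!(\Ri_{-\w})$ with $e_{-\tau}\AM_{p}^{X}\!(\Ri_{-\w})$. For the norm, evaluating the boundary trace on the line $\Real(z)=-\w$ one has $\abs{e_{-\tau}(-\w+is)}=e^{\tau\w}$, so
\begin{align*}
\norm{e_{-\tau}f}_{\AM_{p}^{X}\!(\Ri_{-\w})}
=\norm{e^{\tau(-\w+i\cdot)}f(-\w+i\cdot)}_{\Ma_{p}(X)}
=e^{\tau\w}\norm{f}_{\AM_{p}^{X}\!(\Ri_{-\w})}.
\end{align*}

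Finally I would apply Theorem~\ref{main theorem on functional calculus with exponential decay} to $g:=e_{-\tau}f\in\AM_{p,\tau}^{X}\!(\Ri_{-\w})$. When $\w\tau\le\min(\frac{1}{p},\frac{1}{p'})$ this yields
\[
\norm{f(A)T(\tau)}=\norm{g(A)}\le c_{p}M^{2}\abs{\log(\w\tau)}\,\norm{g}_{\AM_{p}^{X}}=c_{p}M^{2}\abs{\log(\w\tau)}\,e^{\tau\w}\norm{f}_{\AM_{p}^{X}},
\]
and when $\w\tau>\min(\frac{1}{p},\frac{1}{p'})$ it gives
\[
\norm{f(A)T(\tau)}\le 2M^{2}e^{-\w\tau}\norm{g}_{\AM_{p}^{X}}=2M^{2}e^{-\w\tau}e^{\tau\w}\norm{f}_{\AM_{p}^{X}}=2M^{2}\norm{f}_{\AM_{p}^{X}},
\]
which is precisely the asserted estimate. (Conversely, writing an arbitrary $g\in\AM_{p,\tau}^{X}\!(\Ri_{-\w})$ as $e_{-\tau}f$ with $f=e_{\tau}g$ recovers the theorem from the corollary, so the two statements are genuinely equivalent.) There is no substantive obstacle here; the only things to get right are the operator identity $(e_{-\tau}f)(A)=f(A)T(\tau)$ and the factor $e^{\tau\w}$ coming from the boundary trace of $e_{-\tau}$, which cancels the $e^{-\w\tau}$ of Theorem~\ref{main theorem on functional calculus with exponential decay} in the second case.
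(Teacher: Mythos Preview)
Your proof is correct and follows exactly the paper's approach: the paper's one-line proof simply records the identity $f(A)T(\tau)=(e_{-\tau}f)(A)$ together with $\norm{e_{-\tau}f}_{\AM_{p}^{X}}=e^{\w\tau}\norm{f}_{\AM_{p}^{X}}$, and you have spelled out the details behind both of these facts. One small typo: in your displayed norm computation the exponent should read $e^{-\tau(-\w+i\cdot)}$ rather than $e^{\tau(-\w+i\cdot)}$, since $e_{-\tau}(z)=e^{-\tau z}$; the conclusion is unaffected because the modulus and the final equality are correct.
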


\begin{proof} 
Note that $f(A)T(\tau)=(e_{-\tau}f)(A)$ and
$\norm{e_{-\tau}f}_{\AM_{p}^{X}}=e^{\w\tau}\norm{f}_{\AM_{p}^{X}}$.
\end{proof}

\subsubsection{\textbf{\emph{Additional results}}}
\label{Additional results}

As a first corollary of Theorem \ref{main theorem on functional
calculus with exponential decay} we obtain a sufficient condition for
a semigroup generator to have a bounded $\AM_{p}$-calculus.

\begin{corollary}\label{sufficient for bounded calculus} 
Let $-A$
generate a bounded $C_{0}$-semigroup $(T(t))_{t\in\R_{+}}\subseteq
\La(X)$ with
\begin{align*} 
\bigcup_{\tau>0}\ran(T(\tau))=X.
\end{align*} 
Then $A$ has a bounded $\AM_{p}^{X}\!(\Ri_{\w})$-calculus for
all $\w<0$, $p\in [1,\infty]$.
\end{corollary}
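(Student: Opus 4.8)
The plan is to deduce Corollary \ref{sufficient for bounded calculus} from Theorem \ref{main theorem on functional calculus with exponential decay} together with the Convergence Lemma \ref{convergence lemma}, exploiting the surjectivity-type hypothesis $\bigcup_{\tau>0}\ran(T(\tau))=X$ to control $f(A)$ on a dense set and then using a uniform boundedness argument. After shifting (by the composition rule for $A$ and $A+\w$) it suffices to treat $\w<0$ fixed; write $\tilde\w := -\w > 0$, so we are working on the half-plane $\Ri_{-\tilde\w} = \Ri_{\w}$. Fix $f\in\AM_{p}^{X}\!(\Ri_{\w})$. For each $\tau>0$ the function $e_{-\tau}f$ lies in $\AM_{p,\tau}^{X}\!(\Ri_{\w})$ by Lemma \ref{functions of exponential decay}, so Theorem \ref{main theorem on functional calculus with exponential decay} applies to give $(e_{-\tau}f)(A) = f(A)T(\tau) \in \La(X)$ (this is exactly Corollary \ref{composition with semigroup operators}). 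The first key point is therefore: $f(A)$ is \emph{a priori} defined and closed (via the Hille--Phillips/half-plane calculus, Lemma \ref{decay implies laplace transform}), and $f(A)T(\tau)$ is bounded for every $\tau>0$, with the norm bound from Corollary \ref{composition with semigroup operators}.

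The second key point is to let $\tau\searrow 0$. Here the hypothesis on the ranges enters: for $x\in\ran(T(\tau_0))$, say $x=T(\tau_0)y$, one has $f(A)x = f(A)T(\tau_0)y$, which is a well-defined element of $X$; and as $\tau\searrow 0$, $f(A)T(\tau)x = f(A)T(\tau+\tau_0)y \to f(A)T(\tau_0)y = f(A)x$ by strong continuity of the semigroup and boundedness of $f(A)T(\tau_0)$. Thus $f(A)T(\tau) \to f(A)$ strongly on the dense subspace $\bigcup_{\tau_0>0}\ran(T(\tau_0))$. To upgrade this to a bounded operator on all of $X$ I would invoke the Convergence Lemma \ref{convergence lemma}: take the net (sequence) $f_\tau := e_{-\tau}f \in \HT^\infty(\Ri_\w)$ as $\tau\searrow 0$. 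Condition 1) holds since $\norm{e_{-\tau}f}_\infty = e^{\tilde\w\tau}\norm{f}_\infty$ stays bounded as $\tau\to 0$; condition 3) holds since $e^{-\tau z}f(z)\to f(z)$ pointwise; condition 2) is the crucial one and requires $\sup_{0<\tau\le 1}\norm{f(A)T(\tau)} < \infty$. But Corollary \ref{composition with semigroup operators} gives $\norm{f(A)T(\tau)} \le c_p M^2\abs{\log(\tilde\w\tau)}e^{\tilde\w\tau}\norm{f}_{\AM_p^X}$ for small $\tau$, and the factor $\abs{\log(\tilde\w\tau)}$ \emph{diverges} as $\tau\searrow 0$ — so condition 2) is not immediate.

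The main obstacle is precisely this logarithmic blow-up, and resolving it is where the range hypothesis does its real work. The idea is that one does not apply the Convergence Lemma directly to $f_\tau(A)$ on all of $X$; instead one first establishes, using the dense-range argument above, that $f(A)$ (which is closed) is in fact everywhere defined: every $x$ is a limit of vectors $T(\tau_0)y$, but more usefully, the dense subspace $\bigcup_\tau \ran(T(\tau))$ is a core considerations aside, one shows $\dom(f(A)) \supseteq \bigcup_\tau\ran(T(\tau))$ is dense and $f(A)$ is closed, hence to conclude $f(A)\in\La(X)$ it suffices to show $f(A)$ is bounded on that dense subspace with a uniform constant. For $x = T(\tau_0)y$ with $\norm{x}\le 1$ we would like $\norm{f(A)x} = \norm{f(A)T(\tau_0)y}$ bounded independently of the decomposition — but that is false in general since $\norm{y}$ can be large. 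The correct route is: apply Corollary \ref{composition with semigroup operators} with a \emph{fixed} small $\tau$ (say $\tau = \tau_1 := \min(1/p,1/p')/\tilde\w$, putting us in the second branch) to get $\norm{f(A)T(\tau_1)} \le 2M^2\norm{f}_{\AM_p^X}$ with \emph{no} logarithmic factor; then for $x\in\ran(T(\tau_1)) = T(\tau_1)X$, writing $x = T(\tau_1)z$, note $f(A)x$ need not equal $f(A)T(\tau_1)z$ for arbitrary $z$ unless... — in fact it does, because $f(A)$ commutes with $T(\tau_1)$ and $f(A)T(\tau_1)\in\La(X)$, so $f(A)x = f(A)T(\tau_1)z$ whenever $z\in\dom(f(A))$, and by density/closedness this extends. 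Cleanest: observe $f(A)T(\tau_1)\in\La(X)$ and $T(\tau_1)$ has dense range, and $f(A) \supseteq T(\tau_1)^{-1}\big(f(A)T(\tau_1)\big)$ restricted appropriately is not needed — rather, since $(e_{-\tau_1}f)(A) = f(A)T(\tau_1) = T(\tau_1)f(A)$ holds on $\dom(f(A))$, and $\ran(T(\tau_1))$ is dense while $f(A)T(\tau_1)$ is bounded everywhere, the closedness of $f(A)$ forces $f(A)$ to be bounded: for $x\in\dom(f(A))$ with $x_n\to x$, $x_n = T(\tau_1)z_n$ is not automatic, so instead approximate $x$ itself and use that on $\ran(T(\tau_1))$ we can write $\norm{f(A)T(\tau_1)z} \le 2M^2\norm{f}\,\norm{z}$ — this bounds $f(A)$ on $\ran(T(\tau_1))$ only in terms of a \emph{preimage} norm. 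To finish one therefore uses the Convergence Lemma after all, but now along the single sequence $\tau = \tau_1, \tau_1/2^k \cdot(\text{no})$ — better: one applies Theorem \ref{main theorem} to $e_{-\tau_1}f\in\AM_{p,\tau_1}^X(\Ri_\w)$ directly, obtaining $f(A)T(\tau_1)\in\La(X)$, and then observes that by the range hypothesis $f(A)$ is densely defined; applying the Convergence Lemma to the sequence $g_n := e_{-\tau_1/n}f$ would reintroduce the log, so instead one exploits that $f(A) = \lim_{\tau\to 0} f(A)T(\tau)$ strongly on the dense set $\bigcup\ran T(\tau)$ and that $\{f(A)T(\tau) : 0<\tau\le\tau_1\}$, while not uniformly bounded as stated, \emph{does} satisfy a uniform bound once we also shrink $\w$: replacing $\w$ by $\w/2$ (allowed since the claim is for all $\w<0$) we get $f\in\AM_p^X(\Ri_{\w/2})$ and then $e_{-\tau}f\in\AM_{p,\tau}^X(\Ri_{\w/2})$ with the second branch of Theorem \ref{main theorem} active for all $0<\tau\le\tau_1'$, giving a uniform bound $2M^2 e^{-(\tilde\w/2)\tau}\norm{f}_{\AM_p^X(\Ri_{\w/2})}$ with no logarithm — and this, together with $\norm{\cdot}_{\AM_p^X(\Ri_{\w/2})} \le \norm{\cdot}_{\AM_p^X(\Ri_\w)}$ (restriction of half-planes is contractive by Lemma \ref{alternative description}), verifies condition 2) of the Convergence Lemma. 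Then Lemma \ref{convergence lemma} yields $f(A)\in\La(X)$ with $\norm{f(A)} \le 2M^2\norm{f}_{\AM_p^X(\Ri_\w)}$ (taking $\limsup$ as $\tau\searrow 0$), completing the proof; the case $p=1,\infty$ follows from the contractive embedding into $p\in(1,\infty)$ as in Remark \ref{exponential decay in every direction}.
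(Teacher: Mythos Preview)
Your final argument contains a genuine error. The ``second branch'' of Theorem \ref{main theorem on functional calculus with exponential decay} (applied on the half-plane $\Ri_{-\tilde\w}$) is active precisely when $\tilde\w\tau > \min(1/p,1/p')$, i.e., for \emph{large} $\tau$. Replacing $\tilde\w$ by $\tilde\w/2$ makes the product $(\tilde\w/2)\tau$ \emph{smaller}, so for $\tau\searrow 0$ you remain in the first branch and the bound $c_p M^2\abs{\log((\tilde\w/2)\tau)}\norm{f}_{\AM_p^X}$ still diverges logarithmically. Hence condition 2) of the Convergence Lemma is not verified by your argument, and the attempted route via uniform bounds on $(f(A)T(\tau))_{\tau\searrow 0}$ fails.

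The paper's proof sidesteps this difficulty entirely, and the key observation you missed is that the hypothesis is an \emph{equality} $\bigcup_{\tau>0}\ran(T(\tau)) = X$, not merely density (you wrote ``dense''). From $f(A)T(\tau) = (e_{-\tau}f)(A)\in\La(X)$ (Corollary \ref{composition with semigroup operators}) one gets $\ran(T(\tau))\subseteq\dom(f(A))$ for every $\tau>0$, and therefore $\dom(f(A)) = X$. Since $f(A)$ is closed and everywhere defined, the Closed Graph Theorem immediately gives $f(A)\in\La(X)$ --- no limiting procedure in $\tau$ is needed, and the logarithmic blow-up is irrelevant. The Convergence Lemma enters only afterwards, to obtain the uniform estimate \eqref{bounded calculus}: one applies the Closed Graph Theorem to the linear map $\AM_p^X\!(\Ri_\w)\to\La(X)$, $f\mapsto f(A)$, and uses Lemma \ref{convergence lemma} to verify that its graph is closed.
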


\begin{proof} 
Using Corollary \ref{composition with semigroup
operators}, note that $f(A)T(\tau)\in \La(X)$ implies
$\ran(T(\tau))\subseteq \dom(f(A))$. An application of the Closed
Graph Theorem (using the Convergence Lemma) yields \eqref{bounded
calculus}.
\end{proof}

\begin{theorem}\label{composition with fractional powers} 
Let $p\in(1,\infty)$, $\w>0$ and $\alpha,\lambda\in\C$ with
$\Real(\lambda)<0<\Real(\alpha)$. There exists a constant 
$C=C(p,\alpha,\lambda,\w)\geq 0$ such
that the following holds. Let $-A$ generate a $C_{0}$-semigroup
$(T(t))_{t\in\R_{+}}$ of type $(M,0)$ on a Banach space $X$. Then
$\dom((A-\lambda)^{\al})\subseteq \dom(f(A))$ and
\begin{align*}
\norm{f(A)(A-\lambda)^{-\al}}\leq CM^{2}\norm{f}_{\AM_{p}^{X}}
\end{align*} 
for all $f\in \AM_{p}^{X}\!(\Ri_{-\w})$.
\end{theorem}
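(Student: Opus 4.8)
The plan is to reduce Theorem \ref{composition with fractional powers} to Theorem \ref{main theorem on functional calculus with exponential decay} (equivalently, to Corollary \ref{composition with semigroup operators}) by writing the negative fractional power $(A-\lambda)^{-\al}$ as an average of the semigroup operators $T(\tau)$ against a suitable scalar kernel, and then integrating the bound from Corollary \ref{composition with semigroup operators} against that kernel. Concretely, for $\Real(\al)>0$ and $\Real(\lambda)<0$ one has the standard representation
\begin{align*}
(A-\lambda)^{-\al} = \frac{1}{\Gamma(\al)}\int_0^\infty t^{\al-1} e^{\lambda t}\, T(t)\,\ud t,
\end{align*}
valid as a Bochner integral in $\La(X)$ because $\norm{T(t)}\le M$ and $\Real(\lambda)<0$ force absolute convergence (here one uses $\Real(\al)>0$ to handle the singularity at $t=0$). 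Composing on the left with $f(A)$ — legitimate once we know each $f(A)T(t)$ is bounded, which is exactly the content of Corollary \ref{composition with semigroup operators} — gives
\begin{align*}
f(A)(A-\lambda)^{-\al} = \frac{1}{\Gamma(\al)}\int_0^\infty t^{\al-1} e^{\lambda t}\, f(A)T(t)\,\ud t.
\end{align*}

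Next I would estimate the norm by pulling Corollary \ref{composition with semigroup operators} inside the integral. Fix $\w>0$; write $t_0 := \min(\tfrac1p,\tfrac1{p'})/\w$, so that for $t\le t_0$ the bound is $c_p M^2 \abs{\log(\w t)} e^{\w t}\norm{f}_{\AM_p^X}$ and for $t> t_0$ it is $2M^2\norm{f}_{\AM_p^X}$. Therefore
\begin{align*}
\norm{f(A)(A-\lambda)^{-\al}} \le \frac{M^2\norm{f}_{\AM_p^X}}{\abs{\Gamma(\al)}}\left( c_p\!\int_0^{t_0}\! t^{\Real(\al)-1} e^{(\w+\Real(\lambda)) t}\abs{\log(\w t)}\,\ud t + 2\!\int_{t_0}^\infty\! t^{\Real(\al)-1} e^{\Real(\lambda) t}\,\ud t \right).
\end{align*}
Both integrals are finite: near $t=0$ the factor $t^{\Real(\al)-1}\abs{\log(\w t)}$ is integrable since $\Real(\al)>0$ (a logarithm does not destroy integrability of $t^{\Real(\al)-1}$), and at $t=\infty$ the exponential $e^{\Real(\lambda)t}$ with $\Real(\lambda)<0$ gives rapid decay. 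The resulting constant depends only on $p$, $\Real(\al)$, $\Real(\lambda)$ and $\w$ (through $c_p$, $\Gamma(\al)$ and the two elementary integrals), which is the asserted $C=C(p,\al,\lambda,\w)$. To get $\dom((A-\lambda)^{\al})\subseteq\dom(f(A))$: for $x$ in that domain write $x = (A-\lambda)^{-\al} y$ with $y=(A-\lambda)^{\al}x$; then $(A-\lambda)^{-\al}y$ lies in $\dom(f(A))$ because the Bochner integral above represents $f(A)(A-\lambda)^{-\al}y$, using that $f(A)$ is closed and that $f(A)T(t)$ is bounded for each $t$ so the integrand is $f(A)$ applied to the integrand of the $(A-\lambda)^{-\al}$-representation.

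The main obstacle is the bookkeeping around interchanging $f(A)$ with the Bochner integral, since $f(A)$ is only a closed, generally unbounded operator. The clean way is: (i) note $g_t := e_{-t}f \in \AM_p^X(\Ri_{-\w})$ for every $t>0$ with $g_t(A)=f(A)T(t)$ and $\norm{g_t}_{\AM_p^X}=e^{\w t}\norm{f}_{\AM_p^X}$ (as in the proof of Corollary \ref{composition with semigroup operators}); (ii) apply the closedness of $f(A)$ to the approximating Riemann-type sums of $\int_0^\infty t^{\al-1}e^{\lambda t}T(t)x\,\ud t$, using the uniform-on-compacts bound $\norm{f(A)T(t)}\le C_t$ together with the fact that $t\mapsto T(t)x$ is continuous and the weight $t^{\al-1}e^{\lambda t}$ is integrable. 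A slightly slicker alternative, which I would use to keep the argument short, is to truncate: define $x_\epsilon := \frac1{\Gamma(\al)}\int_\epsilon^{1/\epsilon} t^{\al-1}e^{\lambda t}T(t)x\,\ud t$, observe $f(A)x_\epsilon = \frac1{\Gamma(\al)}\int_\epsilon^{1/\epsilon} t^{\al-1}e^{\lambda t} f(A)T(t)x\,\ud t$ (here the integrand is continuous and bounded on the compact interval $[\epsilon,1/\epsilon]$, and the finite integral commutes with the closed operator $f(A)$ by a standard argument), then let $\epsilon\searrow 0$: $x_\epsilon\to (A-\lambda)^{-\al}x$ and, by the dominated estimate above, $f(A)x_\epsilon$ converges; closedness of $f(A)$ finishes it. One should also record at the outset that by the shifting convention at the end of Section \ref{Functional calculus} it suffices to treat the type $(M,0)$ case, which is exactly the hypothesis, so no extra reduction is needed there.
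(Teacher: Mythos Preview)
Your proof is correct and follows essentially the same approach as the paper: represent $(A-\lambda)^{-\al}$ via the Laplace-type integral $\frac{1}{\Gamma(\al)}\int_0^\infty t^{\al-1}e^{\lambda t}T(t)\,\ud t$, push $f(A)$ inside using Corollary~\ref{composition with semigroup operators}, split the integral at $t_0=\tfrac{1}{\w}\min(\tfrac1p,\tfrac1{p'})$, and check integrability of the two pieces. Your treatment of the closedness argument (via truncation to $[\epsilon,1/\epsilon]$) is in fact more detailed than the paper's, which simply asserts that closedness of $f(A)$ yields the conclusion.
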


\begin{proof} 
First note that $-(A-\lambda)$ generates the exponentially stable
semigroup $(e^{\lambda t}T(t))_{t\in\R_{+}}$. Hence Corollary 3.3.6 in
\cite{Haase1} allows us to write
\begin{align*}
(A-\lambda)^{-\alpha}x=\frac{1}{\Gamma(\alpha)}\int_{0}^{\infty}t^{\alpha-1}e^{\lambda
t}T(t)x\,\ud{t}\qquad(x\in X).
\end{align*} 
Fix $f\in \AM_{p}(\Ri_{-\w})$ and set 
$a:=\frac{1}{\w}\min\left\{\frac{1}{p},\frac{1}{p'}\right\}$. By
Corollary \ref{composition with semigroup operators},
\begin{align*}
\int_{0}^{\infty}t^{\Real(\alpha)-1}e^{\Real(\lambda)t}
\norm{f(A)T(t)x}\ud{t} 
\le C M^{2}\norm{f}_{\AM_{p}^{X}} \norm{x} < \infty
\end{align*}
for all $x\in X$, where
\[ C = c_{p}\int_{0}^{a}t^{\Real(\alpha)-1}\abs{\log(\w
t)}e^{(\Real(\lambda)+\w) t}\,\ud{t}
+2\int_{a}^{\infty}t^{\Real(\alpha)-1}e^{\Real(\lambda) t}\,\ud{t}
\]
is independent of $f$, $M$,  and $x$.
Since $f(A)$ is a closed operator, this implies that 
$(A-\lambda)^{-\alpha}$ maps into $\dom(f(A))$ with
\begin{align*}
f(A)(A-\lambda)^{-\alpha}=\frac{1}{\Gamma(\alpha)}\int_{0}^{\infty}t^{\alpha-1}e^{\lambda
t}f(A)T(t)\,\ud{t}
\end{align*}
as a strong integral.
\end{proof}

\begin{remark}\label{large domains} 
Theorem \ref{composition with fractional powers} shows that 
for each analytic multiplier function $f$ the domain $\dom(f(A))$ 
is relatively large, it contains the real 
interpolation spaces $(X,\dom(A))_{\theta,q}$ and the complex 
interpolation spaces $[X,\dom(A)]_{\theta}$ for all
$\theta\in(0,1)$ and $q\in[1,\infty]$. This follows from
\cite[Proposition 1.1.4]{Lunardi} and \cite[Corollary 6.6.3]{Haase1}
for real interpolation spaces and then from \cite[Proposition 2.1.10]{Lunardi}
for the complex interpolation spaces.
\end{remark}

\begin{remark}\label{range of operators} 
We can describe the range of $f(A)(A-\lambda)^{-\alpha}$ in Theorem
\ref{composition with fractional powers} more explicitly. In fact,
\begin{align*} 
\ran(f(A)(A-\lambda)^{-\alpha})\subseteq
\dom\left((A-\lambda)^{\beta}\right)
\end{align*} 
for all $\Real(\beta)<\Real(\alpha)$. Indeed, this follows if we 
show that $\ran(A-\lambda)^{-\alpha}\subseteq \dom((A-\lambda)^{\beta}f(A))$,
and \cite[Theorem 1.3.2]{Haase1} implies
\begin{align*} 
\dom((A-\lambda)^{\beta}f(A))=\dom(f(A))\cap
\dom\left([(z-\lambda)^{\beta}f(z)](A)\right).
\end{align*} 
The inclusion $\ran((A-\lambda)^{-\alpha})\subseteq
\dom(f(A))$ follows from Theorem \ref{composition with
fractional powers}. Since
\begin{align*} 
[(z-\lambda)^{\beta}f(z)](A)(A-\lambda)^{-\alpha}=
[(z-\lambda)^{\beta-\alpha}f(z)](A)=f(A)(A-\lambda)^{\beta-\alpha},
\end{align*} 
the same holds for the inclusion
$\ran((A-\lambda)^{-\alpha})\subseteq \dom\left([(z-\lambda)^{\beta}f(z)](A)\right)$.
\end{remark}

\subsubsection{\textbf{\emph{{Semigroups on Hilbert spaces}}}}
\label{semigroups on Hilbert spaces}

If $X=H$ is a Hilbert space, Plancherel's Theorem implies 
$\AM_{2}^{H}=\HT^{\infty}$ with equality of norms. Hence the 
theory above specializes to the following result, implying a) 
and b) of Theorem \ref{Hilbert space theorem}.

\begin{corollary}\label{specialize to Hilbert 1} 
Let $-A$ generate a bounded $C_{0}$-semigroup 
$(T(t))_{t\in\R_{+}}$ of type $(M,0)$ on a Hilbert space $H$. 
Then the following assertions hold.

\begin{enumerate}
\item[\rm a)] There exists a universal constant $c\geq 0$ 
such that the following holds. Let $\tau,\w>0$. Then
$f(A)\in \La(H)$ and
\begin{align*}
 \norm{f(A)} \leq\begin{cases} c\,M^{2}\abs{\log(\w
\tau)}\;\norm{f}_{\infty} & \text{if }\,\, \w\tau\leq
\frac{1}{2},\\ 2M^{2}e^{-\w
\tau}\norm{f}_{\infty} & \text{if }\,\, \w\tau>
\frac{1}{2}
\end{cases}
\end{align*} 
for all $f\in e_{-\tau}\HT^{\infty}\!(\Ri_{-\w})$. Moreover,
$f(A)T(\tau)\in \La(H)$ with
\begin{align*}
 \norm{f(A)T(\tau)} \leq\begin{cases} c\,M^{2}\abs{\log(\w
\tau)}e^{\w\tau}\norm{f}_{\infty} & \text{if }\,\, \w\tau\leq
\frac{1}{2},\\ 2M^{2}\norm{f}_{\infty} & \text{if }\,\, \w\tau>
\frac{1}{2}
\end{cases}
\end{align*} 
for all $f\in \HT^{\infty}\!(\Ri_{-\w})$.
\item[\rm b)] If 
\begin{align*} 
{\bigcup}_{\tau>0}\ran(T(\tau))=H,
\end{align*} 
then $A$ has a bounded $\HT^{\infty}\!(\Ri_{\w})$-calculus for
all $\w<0$.
\item[\rm c)] For $\w<0$ and $\alpha,\lambda\in\C$ with $\Real(\lambda)< 0<\Real(\alpha)$ there is
$C=C(\alpha,\lambda,\w)\geq 0$ such that 
\begin{align*} 
\norm{f(A)(A-\lambda)^{-\alpha}} \le C M^2  \norm{f}_{\infty}
\end{align*}
for all $f \in \HT^\infty\!(\Ri_{\w})$. In particular, 
$\dom(A^\alpha) \subseteq \dom(f(A))$.
\end{enumerate}
\end{corollary}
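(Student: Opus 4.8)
The plan is to deduce Corollary~\ref{specialize to Hilbert 1} directly from the general Banach space results already established, specializing to the case $X=H$ and $p=2$, where Plancherel's theorem gives the key simplification $\AM_2^H\!(\Ri_\w)=\HT^\infty\!(\Ri_\w)$ isometrically. First I would record this identification: for a Hilbert space, a bounded operator-valued symbol $m\in\Ell^\infty(\R;\La(H))$ is an $\Ell^2(\R;H)$-Fourier multiplier with $\norm{m}_{\Ma_2(H)}=\norm{m}_{\Ell^\infty(\R;\La(H))}$, so in particular every scalar $f\in\HT^\infty\!(\Ri_\w)$ has $f(\w+i\cdot)\in\Ma_2(H)$ with $\norm{f}_{\AM_2^H}=\norm{f}_{\HT^\infty(\Ri_\w)}=\norm{f}_\infty$. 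Consequently $\AM_{2,\tau}^H\!(\Ri_{-\w})=e_{-\tau}\HT^\infty\!(\Ri_{-\w})$ by Lemma~\ref{functions of exponential decay}, and $\min(\tfrac{1}{2},\tfrac{1}{2})=\tfrac12$, so the dichotomy in Theorem~\ref{main theorem on functional calculus with exponential decay} becomes the case distinction $\w\tau\le\tfrac12$ versus $\w\tau>\tfrac12$ stated in part~a).

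For part~a), I would simply apply Theorem~\ref{main theorem on functional calculus with exponential decay} with $p=2$ to get the first estimate, taking $c:=c_2$ as the universal constant. The second estimate (the one for $f(A)T(\tau)$, valid for all $f\in\HT^\infty\!(\Ri_{-\w})$) follows verbatim from Corollary~\ref{composition with semigroup operators} with $p=2$, using $\norm{e_{-\tau}f}_{\AM_2^H}=e^{\w\tau}\norm{f}_\infty$ and the identification of norms. Part~b) is immediate from Corollary~\ref{sufficient for bounded calculus} specialized to $p=2$ on a Hilbert space, since $\AM_2^H\!(\Ri_\w)=\HT^\infty\!(\Ri_\w)$ turns a bounded $\AM_2^H$-calculus into a bounded $\HT^\infty\!(\Ri_\w)$-calculus. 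Part~c) follows from Theorem~\ref{composition with fractional powers} with $p=2$: the constant $C=C(2,\alpha,\lambda,\w)$ produced there works, and the inclusion $\dom(A^\alpha)\subseteq\dom(f(A))$ is part of that theorem's conclusion (for $\Real\lambda<0$ one has $\dom((A-\lambda)^\alpha)=\dom(A^\alpha)$ since $A-\lambda$ and $A$ differ by a bounded operator and the semigroup $e^{\lambda t}T(t)$ is exponentially stable).

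There is essentially no obstacle here, as this corollary is by design a translation of the $p$-dependent Banach space theory into its cleanest form; the only point requiring any care is the bookkeeping of which statement holds for all $f\in\HT^\infty\!(\Ri_{-\w})$ versus only for $f$ with exponential decay, i.e.\ for $f\in e_{-\tau}\HT^\infty\!(\Ri_{-\w})$. The first displayed estimate in a) genuinely requires $f\in e_{-\tau}\HT^\infty\!(\Ri_{-\w})$ (equivalently $f\in\AM_{2,\tau}^H\!(\Ri_{-\w})$), whereas the $f(A)T(\tau)$ estimate holds for arbitrary bounded holomorphic $f$ because $f(A)T(\tau)=(e_{-\tau}f)(A)$ automatically lands in the ideal. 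I would state the proof compactly as: apply Theorem~\ref{main theorem on functional calculus with exponential decay}, Corollary~\ref{composition with semigroup operators}, Corollary~\ref{sufficient for bounded calculus}, and Theorem~\ref{composition with fractional powers}, each with $p=2$, and use $\AM_2^H=\HT^\infty$ with equality of norms.

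\begin{proof}
Since $H$ is a Hilbert space, Plancherel's theorem gives $\AM_{2}^{H}\!(\Ri_{\w})=\HT^{\infty}\!(\Ri_{\w})$ with $\norm{f}_{\AM_{2}^{H}}=\norm{f}_{\infty}$ for all $f\in\HT^{\infty}\!(\Ri_{\w})$, and $\min(\tfrac12,\tfrac12)=\tfrac12$. By Lemma~\ref{functions of exponential decay}, $\AM_{2,\tau}^{H}\!(\Ri_{-\w})=e_{-\tau}\HT^{\infty}\!(\Ri_{-\w})$.

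a) The first estimate is Theorem~\ref{main theorem on functional calculus with exponential decay} applied with $p=2$, setting $c:=c_{2}$. For the second estimate, let $f\in\HT^{\infty}\!(\Ri_{-\w})$. Then $f(A)T(\tau)=(e_{-\tau}f)(A)$ and $\norm{e_{-\tau}f}_{\AM_{2}^{H}}=e^{\w\tau}\norm{f}_{\infty}$, so the claim follows from Corollary~\ref{composition with semigroup operators} with $p=2$.

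b) Apply Corollary~\ref{sufficient for bounded calculus} with $p=2$: $A$ has a bounded $\AM_{2}^{H}\!(\Ri_{\w})$-calculus for all $\w<0$, which by $\AM_{2}^{H}\!(\Ri_{\w})=\HT^{\infty}\!(\Ri_{\w})$ is a bounded $\HT^{\infty}\!(\Ri_{\w})$-calculus.

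c) This is Theorem~\ref{composition with fractional powers} applied with $p=2$, with $C:=C(2,\alpha,\lambda,\w)$. Since $\Real(\lambda)<0$, the operator $A-\lambda$ differs from $A$ by a bounded invertible shift and $\dom((A-\lambda)^{\alpha})=\dom(A^{\alpha})$, whence $\dom(A^{\alpha})\subseteq\dom(f(A))$.
\end{proof}
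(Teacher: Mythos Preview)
Your proposal is correct and matches the paper's approach exactly: the paper simply observes that Plancherel's theorem gives $\AM_{2}^{H}=\HT^{\infty}$ with equality of norms, so that Theorem~\ref{main theorem on functional calculus with exponential decay}, Corollary~\ref{composition with semigroup operators}, Corollary~\ref{sufficient for bounded calculus}, and Theorem~\ref{composition with fractional powers} specialize with $p=2$ to yield a), b), and c) respectively. Your additional remark that $\dom((A-\lambda)^{\alpha})=\dom(A^{\alpha})$ makes explicit the small step needed for the final domain inclusion in c), which the paper leaves implicit.
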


Part c) shows that, even though semigroup generators on Hilbert spaces 
do not have a bounded $\HT^{\infty}$-calculus in general, each function $f$ 
that decays with polynomial rate $\alpha>0$ at infinity yields a bounded 
operator $f(A)$. For $\al>\frac{1}{2}$ this is already covered by Lemma 
\ref{decay implies laplace transform}, but for $\al\in(0,\frac{1}{2}]$ 
it appears to be new.

\begin{remark}\label{numerical stability}
Part c) of Corollary 3.10 yields a statement about stability of numerical methods. Let $-A$ generate an exponentially stable semigroup $(T(t))_{t\geq 0}$ on a Hilbert space, let $r\in\HT^{\infty}(\C_{+})$ be such that $\norm{r}_{\HT^{\infty}(\C_{+})}\leq 1$, and let $\alpha,h>0$. Then 
\begin{align}\label{numerical stability equation}
\sup\left\{\norm{r(hA)^{n}x}\mid n\in\N,x\in \dom(A^{\alpha})\right\}<\infty
\end{align}
follows from c) in Corollary 3.10 after shifting the generator. Elements of the form $r^{n}(hA)x$ are often used in numerical methods to approximate the solution of the abstract Cauchy problem associated to $-A$ with initial value $x$, and \eqref{numerical stability equation} shows that such approximations are stable whenever $x\in\dom(A^{\alpha})$.
\end{remark}

\section{$m$-Bounded functional calculus}
\label{m-bounded functional calculus}

In this section we describe another transference principle for
semigroups, one that provides estimates for the norms of operators of
the form $f^{(m)}(A)$ for $f$ an analytic multiplier function and
$f^{(m)}$ its $m$-th derivative, $m\in\N$.
We use terminology from Section 5 of \cite{Batty-Haase-Mubeen}. Moreover,
we recall our notational simplification $\AM_{p}(\Ri_\w) := \AM_{p}^{X}\!(\Ri_\omega)$
(Remark \ref{important-remark}).

\medskip

Let $\w < \w_0$ be real numbers. 
An operator $A$ of half-plane type $\w_{0}$ 
on a Banach space $X$
has an \emph{$m$-bounded
$\AM_{p}^{X}\!(\Ri_{\w})$-calculus} if there exists $C\geq 0$ such that
$f^{(m)}(A)\in \La(X)$ with
\begin{align*} 
\norm{f^{(m)}(A)}\leq C\norm{f}_{\AM_{p}^{X}} \qquad \text{for all $f\in\AM_{p}^{X}\!(\Ri_{\w})$}.
\end{align*} 
This is well defined
since the Cauchy integral formula implies that $f^{(m)}$ is bounded on
every half-plane $\Ri_{\w'}$ with $\w'>\w$. 

We say that  $A$ has a \emph{strong $m$-bounded
$\AM_{p}^{X}$-calculus of type $\w_{0}$} if $A$ has an $m$-bounded
$\AM_{p}^{X}\!(\Ri_{\w})$-calculus for {\em every} $\w<\w_{0}$ and
such that for some $C\ge 0$ one has
\begin{align}\label{strong m-bounded} 
\norm{f^{(m)}(A)}\leq
\frac{C}{(\w_{0}-\w)^{m}}\norm{f}_{\AM_{p}^{X}\!(\Ri_\w)}
\end{align} 
for all $f\in \AM_{p}^{X}\!(\Ri_{\w})$ and $\w < \w_0$.

\begin{lemma}\label{1-bounded implies m-bounded}
Let $A$ be an operator of half-plane type $\w_{0}\in\R$ on a 
Banach space $X$, and let $p\in[1,\infty]$ and $m\in\N$. 
If $A$ has a strong $m$-bounded $\AM_{p}^{X}$-calculus of type $\w_{0}$, 
then $A$ has a strong $n$-bounded $\AM_{p}^{X}$-calculus of type 
$\w_{0}$ for all $n>m$.
\end{lemma}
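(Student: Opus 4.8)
The plan is to reduce the $n$-bounded estimate to the $m$-bounded one by writing the $n$-th derivative as an $m$-th derivative of a suitably regularized function, and then controlling the cost of that regularization. Concretely, fix $n > m$, write $n = m + k$ with $k \in \N$, and let $\w < \w_0$. Pick an intermediate height $\w' := \tfrac{1}{2}(\w + \w_0)$, so that $\w < \w' < \w_0$ and both gaps $\w' - \w$ and $\w_0 - \w'$ are comparable to $\w_0 - \w$. The idea is: given $f \in \AM_p^X(\Ri_\w)$, apply the hypothesis on the half-plane $\Ri_{\w'}$ not to $f$ itself but to a primitive-type modification whose $m$-th derivative reproduces $f^{(n)}$ up to lower-order terms, or alternatively apply the hypothesis directly to $f$ on $\Ri_{\w'}$ and then boost from the $m$-th derivative to the $n$-th by a Cauchy integral / Taylor expansion argument on the strip between heights $\w'$ and $\w$.

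The cleanest route is the second one. By the hypothesis applied on $\Ri_{\w'}$, we have $g^{(m)}(A) \in \La(X)$ with $\norm{g^{(m)}(A)} \le \tfrac{C}{(\w_0 - \w')^m}\norm{g}_{\AM_p^X(\Ri_{\w'})}$ for every $g \in \AM_p^X(\Ri_{\w'})$. Now I want to produce $f^{(n)}(A)$ from such data. The key observation is that for $f \in \AM_p^X(\Ri_\w)$ and any $\zeta$ with $\Real \zeta$ slightly larger than $\w$, the shifted function $f(\cdot + \zeta - \w')$ — or more simply $f$ restricted to $\Ri_{\w'}$ — lies in $\AM_p^X(\Ri_{\w'})$ with norm bounded by $\norm{f}_{\AM_p^X(\Ri_\w)}$ (using Lemma \ref{alternative description}, since $\w' > \w$). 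Then $f^{(n)}(A) = \bigl(f^{(k)}\bigr)^{(m)}(A)$, so it suffices to check that $f^{(k)} \in \AM_p^X(\Ri_{\w'})$ with a good norm bound, namely $\norm{f^{(k)}}_{\AM_p^X(\Ri_{\w'})} \le \tfrac{c_k}{(\w_0-\w)^k}\norm{f}_{\AM_p^X(\Ri_\w)}$ for some constant $c_k$ depending only on $k$. Combining the two estimates yields $\norm{f^{(n)}(A)} \le \tfrac{C c_k}{(\w_0-\w')^m (\w_0-\w)^k}\norm{f}_{\AM_p^X(\Ri_\w)} \le \tfrac{C'}{(\w_0-\w)^{n}}\norm{f}_{\AM_p^X(\Ri_\w)}$, which is exactly \eqref{strong m-bounded} with $m$ replaced by $n$.

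Thus the heart of the matter is the multiplier-norm estimate for derivatives: if $f \in \AM_p^X(\Ri_\w)$ then $f'$, and hence by iteration $f^{(k)}$, lies in $\AM_p^X(\Ri_{\w''})$ for any $\w'' > \w$ with $\norm{f^{(k)}}_{\AM_p^X(\Ri_{\w''})} \lesssim_k (\w'' - \w)^{-k}\norm{f}_{\AM_p^X(\Ri_\w)}$. I would prove this via the Cauchy integral formula on a circle: for $z \in \Ri_{\w''}$, writing $r := \tfrac{1}{2}(\w'' - \w)$ so that the closed disc of radius $r$ about $z$ lies in $\Ri_{(\w+\w'')/2} \subset \Ri_\w$, one has $f'(z) = \tfrac{1}{2\pi i}\int_{\abs{\xi - z} = r} \tfrac{f(\xi)}{(\xi - z)^2}\,\ud\xi$, which exhibits $f'(z)$ as an average of translates and modulations of $f$; passing to the boundary trace $\w'' + i\cdot$ and using that $\Ma_p(X)$ is a Banach algebra stable under translation and under the relevant integral averaging (the averaging kernel has finite total mass $\lesssim 1/r$), one obtains $\norm{f'(\w'' + i\cdot)}_{\Ma_p(X)} \lesssim r^{-1}\norm{f}_{\AM_p^X(\Ri_\w)}$. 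Iterating $k$ times (reducing the strip width at each step) gives the $(\w''-\w)^{-k}$ bound. The main obstacle — and the step requiring care — is justifying that this Cauchy-integral representation of $f^{(k)}$ transfers to a genuine $\Ma_p(X)$-norm bound on the boundary trace, i.e. that one may interchange the contour integral with the Fourier-multiplier action and that the translation operators $\tau \mapsto (s \mapsto f(\w'' + is + i\tau))$ act isometrically on $\Ma_p(X)$; once the averaging is set up with an $\Ell^1$ kernel in the imaginary-part variable this is routine, but it is the point where the argument must be made precise rather than waved through. Everything else is bookkeeping with constants depending only on $k = n - m$.
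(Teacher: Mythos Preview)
Your proposal is correct and follows essentially the same strategy as the paper: pick the midpoint $\w' = \tfrac{1}{2}(\w + \w_0)$, show that $f^{(n-m)} \in \AM_p^X(\Ri_{\w'})$ with norm at most a constant times $(\w'-\w)^{-(n-m)}\norm{f}_{\AM_p^X(\Ri_\w)}$, and then apply the strong $m$-bounded hypothesis on $\Ri_{\w'}$ to $g = f^{(n-m)}$.

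The only notable difference is in how the derivative estimate in $\AM_p^X$ is obtained. You use the Cauchy integral over a circle and then argue that the resulting $\theta$-average of translates passes to the multiplier norm; the paper instead uses the Cauchy integral over a vertical line $\alpha + i\R$ with $\w < \alpha < \w'$, which immediately writes $f^{(k)}(\w' + i\,\cdot)$ as the convolution of $f(\alpha + i\,\cdot)$ with the explicit $\Ell^1$-kernel $\frac{k!}{2\pi}(\alpha - \w' - i\,\cdot)^{-k-1}$. That line-integral representation sidesteps precisely the ``obstacle'' you flag, since the $\Ma_p(X)$-bound then follows at once from $\norm{g \ast h}_{\Ma_p(X)} \le \norm{g}_{\Ell^1}\norm{h}_{\Ma_p(X)}$ together with Lemma~\ref{alternative description}; no separate justification of interchange or Bochner integrability in $\Ma_p(X)$ is needed. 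Your circle argument also works (the integrand is continuous in $\theta$ with values in $\Ell^p(\R;X)$ after pairing against a fixed Schwartz function, and translation in $s$ is isometric on $\Ma_p(X)$), but the paper's route is slightly cleaner for exactly the reason you anticipated.
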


\begin{proof}
Let $\w<\alpha<\beta<\w_{0}$, $f\in\AM_{p}(\Ri_{\w})$ and $n\in\N$. Then
\begin{align*}
f^{(n)}(\beta+is)&=\frac{n!}{2\pi i}\int_{\R}\frac{f(\alpha+ir)}
{(\alpha+ir-(\beta+is))^{n+1}}\,\ud r\\
&=\frac{n!}{2\pi i}\left(f(\alpha+i\cdot)\ast 
(\alpha-\beta-i\cdot)^{-n-1}\right)(s)
\end{align*}
for all $s\in\R$, by the Cauchy integral formula. Hence, using 
Lemma \ref{alternative description},
\begin{align*}
\norm{f^{(n)}(\beta+i\cdot)}_{\Ma_{p}(X)}&\leq \frac{n!}{2\pi}
\norm{(\alpha-\beta-i\cdot)^{-n-1}}_{\Ell^{1}(\R)}
\norm{f(\alpha+i\cdot)}_{\Ma_{p}(X)}\\
&\leq \frac{C}{(\beta-\alpha)^{n}}\norm{f}_{\AM_{p}(\Ri_{\w})}
\end{align*}
for some $C=C(n)\geq 0$ independent of $f$, $\beta$, $\alpha$ and $\w$. Letting $\alpha$ tend to $\w$ yields
\begin{align}\label{cauchy inequality}
\norm{f^{(n)}}_{\AM_{p}(\Ri_{\beta})}=\norm{f^{(n)}(\beta+i\cdot)}_{\Ma_{p}(X)}\leq 
\frac{C}{(\beta-\w)^{n}}\norm{f}_{\AM_{p}(\Ri_{\w})}.
\end{align}
Now let $n>m$. Applying \eqref{cauchy inequality} with $n-m$ 
in place of $n$ shows that $f^{(n-m)}\in\AM_{p}(\Ri_{\beta})$ with
\begin{align*}
\norm{f^{(n)}(A)}&\leq \frac{C'}{(\w_{0}-\beta)^{m}}\norm{f^{(n-m)}}_{\AM_{p}(\Ri_{\beta})}\\
&\leq \frac{CC'}{(\w_{0}-\beta)^{m}(\beta-\w)^{n-m}}\norm{f}_{\AM_{p}(\Ri_{\w})}.
\end{align*}
Finally, letting $\beta=\frac{1}{2}(\w+\w_{0})$,
\begin{align*}
\norm{f^{(n)}(A)}\leq \frac{C''}{(\w_{0}-\w)^{n}}\norm{f}_{\AM_{p}(\Ri_{\w})}
\end{align*}
for some $C''\geq 0$ independent of $f$ and $\w$.
\end{proof} 

For the transference principle in Proposition \ref{transference} it
is essential that the support of $\mu\in\eM_{\w}(\R_{+})$ is
contained in some interval $[\tau,\infty)$ with $\tau>0$. In general
one cannot expect to find such a transference principle for arbitrary
$\mu$, as this would allow one to prove that semigroup generators have
a bounded analytic multiplier calculus. But this is known to be false
in general, cf.~\cite[Corollary 9.1.8]{Haase1}. However, if we let
$t\mu$ be given by $(t\mu)(\ud t):=t\mu(\ud t)$ then we can deduce the
following transference principle. We use the conventions $1/\infty:=0$, 
$\infty^{0}:=1$.

\begin{proposition}\label{transference m-bounded} 
Let $-A$ be the generator of  a
$C_{0}$-semigroup $(T(t))_{t\in\R_{+}}$ of type
$(M,0)$ on a Banach space $X$. Let $p\in[1,\infty]$, $\w<0$ and $\mu\in\eM_{\w}(\R_{+})$. Then
\begin{align*} 
\norm{T_{t\mu}}\leq
\frac{M^{2}}{\abs{\w}}p^{-1/p}(p')^{-1/p'}\norm{L_{e_{-\w}\mu}}_{\mathcal{L}(\Ell^{p}(X))}.
\end{align*}
\end{proposition}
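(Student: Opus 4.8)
The plan is to mimic the factorization used in the proof of Proposition \ref{transference}, but with the weight $t$ absorbed differently. Recall that there the key identity was $\mu = (\psi\ast\ph)e_{\w}\mu$ with $\psi\ast\ph\equiv e_{-\w}$ on the support of $\mu$. Here we must produce the extra factor of $t$, so the natural idea is to use the convolution identity $t\,(\psi\ast\ph)(t) = (s\psi)\ast\ph\,(t) + \psi\ast(s\ph)(t)$ for functions supported in $\R_+$ — but that introduces two terms and does not quite give a clean transference. A cleaner route, and the one I would pursue, is this: since $\w<0$, the function $e_{-\w}$ (i.e. $s\mapsto e^{-\w s} = e^{\abs{\w}s}$) grows, but we want the representation $t\mu(\ud t)$, so write $t\mu = (t e_{\w})\cdot (e_{-\w}\mu)$ and observe that $te^{\w t} = t e^{-\abs{\w}t}$ is, up to the scalar factor $1/\abs{\w}^2 \cdot \abs{\w}^2$, a probability-type density on $\R_+$ (it is $\abs{\w}^{-2}$ times the Gamma$(2,\abs{\w})$ density). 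Concretely, $t e^{\w t} = \psi \ast \ph$ where $\psi(s) = \ph(s) = e^{\w s/2}\mathbf 1_{\R_+}(s)$ up to constants, since the convolution of two exponentials $e^{\w s/2}$ on $\R_+$ is $s\,e^{\w s/2}$... let me be careful: $(e_{\w/2}\ast e_{\w/2})(t) = \int_0^t e^{\w s/2}e^{\w(t-s)/2}\,\ud s = t e^{\w t/2}$, not $te^{\w t}$. So instead take $\psi = \ph = e_{\w}\mathbf 1_{\R_+}$, giving $(\psi\ast\ph)(t) = t e^{\w t}$ exactly. Good — so $t e_{\w} = \psi\ast\ph$ with $\psi=\ph=e_{\w}\mathbf 1_{\R_+}$.

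With this in hand, I would factor $T_{t\mu} = P\circ L_{e_{-\w}\mu}\circ\iota$ exactly as in Proposition \ref{transference}, where $\iota(x)(s) := \psi(-s)T(-s)x$ for $s\le 0$ and $0$ otherwise, and $Pf := \int_0^\infty \ph(t)T(t)f(t)\,\ud t$. The verification that this composition equals $T_{t\mu}$ is the same computation as in \cite[Section 2]{Haase6} / Proposition \ref{transference}, using $t\mu = (\psi\ast\ph)\,e_{-\w}\mu$ (note $t\mu = (te_\w)(e_{-\w}\mu)$ and $te_\w = \psi\ast\ph$). Then Hölder's inequality gives
\begin{align*}
\norm{T_{t\mu}} \le M^2 \norm{\psi}_{p}\,\norm{L_{e_{-\w}\mu}}_{\La(\Ell^p(X))}\,\norm{\ph}_{p'}.
\end{align*}
It remains to compute $\norm{\psi}_p = \norm{e_\w\mathbf 1_{\R_+}}_p = \bigl(\int_0^\infty e^{p\w s}\,\ud s\bigr)^{1/p} = (p\abs{\w})^{-1/p}$ and likewise $\norm{\ph}_{p'} = (p'\abs{\w})^{-1/p'}$, so that $\norm{\psi}_p\norm{\ph}_{p'} = \abs{\w}^{-1/p-1/p'}\,p^{-1/p}(p')^{-1/p'} = \abs{\w}^{-1}p^{-1/p}(p')^{-1/p'}$, which is exactly the claimed constant. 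The endpoint cases $p=1$ and $p=\infty$ are handled by the conventions $1/\infty := 0$, $\infty^0:=1$: e.g. for $p=1$, $\norm{\psi}_1 = 1/\abs{\w}$ and $\norm{\ph}_\infty = 1$, and the product is $1/\abs{\w}$ as the formula predicts.

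The main obstacle — really the only non-routine point — is justifying the factorization $T_{t\mu}=P\circ L_{e_{-\w}\mu}\circ\iota$ rigorously, i.e. checking that the formal manipulation with Fubini/Tonelli is licit given that $e_{-\w}\mu = e^{\abs\w s}\nu(\ud s)$ involves an exponentially growing weight while $\psi,\ph$ decay exponentially; one must confirm that all the integrals defining $\iota(x)$, $L_{e_{-\w}\mu}$ and $P$ converge absolutely and that the composition unwinds to $\int_0^\infty t\,T(t)x\,\mu(\ud t)$. This is precisely the content of the transference computation in \cite[Section 2]{Haase6}, applied with the present choice of $\psi,\ph$, so I would simply invoke that argument — observing that $\psi\ast\ph = te_\w$ has the decay needed to absorb the growth of $e_{-\w}\mu$ back into the (bounded on $\R_+$, since type $(M,0)$) semigroup — rather than redo it. A secondary check is that $\iota$ genuinely maps into $\Ell^p(\R;X)$ and $P$ is bounded $\Ell^p(\R;X)\to X$, which follows from $\|T(t)\|\le M$ together with the finiteness of $\norm{\psi}_p$ and $\norm{\ph}_{p'}$ just computed.
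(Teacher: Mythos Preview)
Your proposal is correct and takes essentially the same approach as the paper: choose $\psi=\ph=\mathbf{1}_{\R_+}e_{\w}$ so that $(\psi\ast\ph)(t)=te^{\w t}$ and hence $(\psi\ast\ph)e_{-\w}\mu=t\mu$, invoke the abstract transference factorization $T_{t\mu}=P\circ L_{e_{-\w}\mu}\circ\iota$ from \cite[Section~2]{Haase6}, and compute $\norm{e_{\w}}_{p}\norm{e_{\w}}_{p'}=\abs{\w}^{-1}p^{-1/p}(p')^{-1/p'}$ via H\"older. The paper's proof is exactly this, stated more tersely.
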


\begin{proof} 
We can factorize $T_{t\mu}$ as $T_{t\mu}=P\circ L_{e_{-\w}\mu}\circ
\iota$, where $\iota$ and $P$ are as in the proof of Proposition
\ref{transference} with $\psi,\ph:=\mathbf{1}_{\R_{+}}e_{\w}$. This
follows from the abstract transference principle in \cite[Section
2]{Haase6}, since $(\psi\ast\ph)e_{-\w}\mu=t\mu$. Then
\begin{align*} 
\norm{T_{t\mu}}&\leq M^{2}
\norm{e_{\w}}_{p'}\norm{L_{e_{-\w}\mu}}_{\mathcal{L}(\Ell^{p}(X))}\norm{e_{\w}}_{p}\\
&=\frac{M^{2}}{\abs{\w}}p^{-1/p}(p')^{-1/p'}\norm{L_{e_{-\w}\mu}}_{\mathcal{L}(\Ell^{p}(X))},
\end{align*}
by H\"{o}lder's inequality.
\end{proof}

We are now ready to prove our main result on $m$-bounded functional
calculus, a generalization of Theorem 7.1 in \cite{Batty-Haase-Mubeen}
to arbitrary Banach spaces.

\begin{theorem}\label{main result on m-bounded calculus} 
Let $A$ be a
densely defined operator of half-plane type $0$ on a Banach space
$X$. Then the following assertions are equivalent:
\begin{enumerate}
\item[\rm (i)] $-A$ is the generator of a
bounded $C_{0}$-semigroup on $X$.
\item[\rm (ii)] $A$ has a strong $m$-bounded
$\AM_{p}^{X}$-calculus of type $0$ for some/all $p\in[1,\infty]$ and
some/all $m\in\N$.
\end{enumerate} 
In particular, if $-A$ generates a bounded
$C_{0}$-semigroup then $A$ has an $m$-bounded
$\AM_{p}^{X}\!(\Ri_{\w})$-calculus for all $\w<0$, $p\in[1,\infty]$ and
$m\in\N$.
\end{theorem}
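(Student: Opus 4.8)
The plan is to prove the equivalence (i) $\Leftrightarrow$ (ii) via a cycle, using Proposition \ref{transference m-bounded} for the implication (i) $\Rightarrow$ (ii) and the Convergence Lemma together with Lemma \ref{laplace transform} for the converse. For (i) $\Rightarrow$ (ii): fix $p\in[1,\infty]$, $\w<0$, and $f\in\AM_{p}^{X}\!(\Ri_\w)$. The first step is to realize $f^{(m)}(A)$ as $T_{t^m\mu}$-type operator when $f=\widehat\mu$ with $\mu\in\eM_\w(\R_+)$; indeed, differentiating $\widehat\mu(z)=\int_0^\infty e^{-zs}\,\mu(\ud s)$ under the integral gives $f^{(m)}(z)=\widehat{(-s)^m\mu}(z)$, so $f^{(m)}(A)=(-1)^m T_{s^m\mu}$. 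Iterating Proposition \ref{transference m-bounded} $m$ times (peeling off one factor of $s$ at a time, absorbing the remaining $s^{m-1}\mu$ into a measure in $\eM_\w(\R_+)$ at each stage) yields a bound of the form
\begin{align*}
\norm{f^{(m)}(A)}\leq \frac{M^2 C_{p,m}}{\abs{\w}^{m}}\norm{L_{e_{-\w}\mu}}_{\La(\Ell^p(X))}
=\frac{M^2 C_{p,m}}{(0-\w)^{m}}\norm{f}_{\AM_{p}^{X}\!(\Ri_\w)},
\end{align*}
where the last equality is Lemma \ref{laplace transform}. This establishes \eqref{strong m-bounded} with $\w_0=0$ on the dense subalgebra $\AM_{1}^{X}\!(\Ri_\w)$ of Laplace transforms; the second step is to extend to all $f\in\AM_{p}^{X}\!(\Ri_\w)$ by the regularization/approximation argument already used in the proof of Theorem \ref{main theorem on functional calculus with exponential decay} — multiply $f$ by $g_k(z)=k/(z-\w+k)$ and shift by $\epsilon$, apply Lemma \ref{decay implies laplace transform} to land in $\AM_1$, use $\norm{g_k}_{\AM_p^X}\le 1$ and Lemma \ref{alternative description} to control norms, and pass to the limit via the Convergence Lemma \ref{convergence lemma} applied to the net $(f_{k,\epsilon}^{(m)})$. (One must check the derivatives $f_{k,\epsilon}^{(m)}$ converge pointwise to $f^{(m)}$ on $\Ri_\w$ and stay uniformly bounded there, which follows from the Cauchy integral formula as in Lemma \ref{1-bounded implies m-bounded}.) The ``some $m$ / all $m$'' part then follows from Lemma \ref{1-bounded implies m-bounded} once it is known for a single $m$; the ``some $p$ / all $p$'' part is handled by the contractive inclusions $\AM_1^X=\AM_\infty^X\hookrightarrow\AM_p^X$ for the ``all'' direction, while ``some $p$'' feeds into the converse.

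For (ii) $\Rightarrow$ (i): assume $A$ has a strong $m$-bounded $\AM_{p}^{X}$-calculus of type $0$ for some $p$ and some $m$. Since $A$ is densely defined and of half-plane type $0$, it suffices by the Hille–Yosida/Feller–Miyadera–Phillips theorem to produce a uniform bound on $\norm{(\lambda+A)^{-n}}$ for $\Real\lambda>0$ and $n\in\N$, or equivalently (after the standard reduction) a bound $\norm{e^{-tA}}\le M$; the cleanest route is to test the calculus on the resolvent functions $r_\lambda(z)=(z+\lambda)^{-1}$, whose $m$-th derivative is $(-1)^m m!\,(z+\lambda)^{-m-1}$. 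Here $r_\lambda\in\AM_1^X\!(\Ri_\w)$ for any $\w<0$ with $\Real\lambda>-\w$, with $\norm{r_\lambda}_{\AM_1^X\!(\Ri_\w)}=1/(\Real\lambda+\w)$ by Lemma \ref{laplace transform} (it is the Laplace transform of $e^{-\lambda s}\cdot e^{\w s}$-type density), so \eqref{strong m-bounded} gives $\norm{(A+\lambda)^{-m-1}}\le \frac{C}{m!}\cdot\frac{1}{\abs{\w}^m(\Real\lambda+\w)}$; optimizing over $\w\in(-\Real\lambda,0)$ yields $\norm{(A+\lambda)^{-m-1}}\le C'/(\Real\lambda)^{m+1}$, which is exactly the Hille–Yosida condition for the index $m+1$ and hence forces $-A$ to generate a bounded $C_0$-semigroup.

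The main obstacle I expect is the bookkeeping in the iterated application of Proposition \ref{transference m-bounded}: one has to verify carefully that after stripping off one factor of $t$ the object $t^{m-1}\mu$ is again a genuine element of $\eM_\w(\R_+)$ (the polynomial weight does not destroy membership because $\w<0$ gives exponential decay that dominates), that the constants multiply in a controlled way to give the clean $p^{-1/p}(p')^{-1/p'}$-type factor raised to the $m$-th power, and that the powers of $\abs{\w}$ accumulate to $\abs{\w}^m$ as required by the definition of a \emph{strong} $m$-bounded calculus. A secondary subtlety is that the regularization step must be carried out for $f^{(m)}$ rather than $f$, so one needs the Convergence Lemma to apply to the sequence of derivatives; this is legitimate because $f^{(m)}_{k,\epsilon}=(f g_k)^{(m)}(\cdot+\epsilon)$ is a finite sum (by the Leibniz rule) of products of derivatives of $f(\cdot+\epsilon)$ and $g_k(\cdot+\epsilon)$, each of which lies in the appropriate analytic multiplier algebra with controlled norms via Lemma \ref{alternative description}, and the pointwise limit on $\Ri_\w$ is $f^{(m)}$.
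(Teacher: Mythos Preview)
Your plan has issues in both directions.

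For (i)$\Rightarrow$(ii), the proposed iteration of Proposition~\ref{transference m-bounded} does not work as described. One application to $T_{t^m\mu}=T_{t\cdot\nu}$ with $\nu:=t^{m-1}\mu$ yields a bound in terms of $\norm{L_{e_{-\w}\nu}}_{\La(\Ell^p(X))}=\norm{f^{(m-1)}(\w+i\cdot)}_{\Ma_p(X)}$; this is a Fourier multiplier norm, not another $T$-type quantity, so the proposition cannot be reapplied. Moreover $\norm{f^{(m-1)}(\w+i\cdot)}_{\Ma_p(X)}$ is not controlled by $\norm{f}_{\AM_p^X\!(\Ri_\w)}$ on the \emph{same} half-plane, and $t^{m-1}\mu$ need not even lie in $\eM_\w(\R_+)$ (only in $\eM_{\w'}(\R_+)$ for $\w'\in(\w,0)$), so your ``bookkeeping'' worry is in fact a genuine obstruction. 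The paper sidesteps all of this by invoking Lemma~\ref{1-bounded implies m-bounded} \emph{first} to reduce to $m=1$, and then applying Proposition~\ref{transference m-bounded} exactly once. You already cite that lemma; use it as the primary reduction rather than only for the ``some/all~$m$'' clause.

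For (ii)$\Rightarrow$(i), your resolvent route diverges from the paper's and contains a gap: the bound $\norm{(A+\lambda)^{-(m+1)}}\le C'/(\Real\lambda)^{m+1}$ is the Hille--Yosida estimate for the \emph{single} index $m{+}1$, and that alone does not force $-A$ to generate a bounded semigroup --- Hille--Yosida requires the estimate for \emph{all} $n\ge 1$ with a uniform constant, and you have said nothing about $n\le m$. The paper instead tests the calculus on $e_{-t}$ directly: since $(e_{-t})^{(m)}=(-t)^m e_{-t}$ and $\norm{e_{-t}}_{\AM_p(\Ri_\w)}\le e^{-t\w}$, the strong $m$-bounded estimate gives $t^m\norm{e^{-tA}}\le C\abs{\w}^{-m}e^{-t\w}$; choosing $\w=-1/t$ yields $\sup_{t>0}\norm{e^{-tA}}<\infty$, and \cite[Lemma~2.5]{Batty-Haase-Mubeen} then concludes that $-A$ generates a bounded $C_0$-semigroup.
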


\begin{proof} 
(i) $\Rightarrow$ (ii)
By Lemma \ref{1-bounded implies m-bounded} it suffices to let $m=1$. 
We proceed along the same
lines as the proof of Theorem \ref{main theorem on functional calculus
with exponential decay}. Let $(T(t))_{t\in\R_{+}}\subseteq \La(X)$ be
the semigroup generated by $-A$ and fix $\w<0$, $p\in[1,\infty]$ and
$f\in \AM_{p}(\Ri_{\w})$. Define the functions
$f_{k,\epsilon}:=f(\cdot+\epsilon)g_{k}(\cdot+\epsilon)$ for $k\in\N$
and $\epsilon>0$, where $g_{k}(z):=\frac{k}{z-\w+k}$ for
$z\in\Ri_{\w}$. Then $f_{k,\epsilon}\in\AM_{1}\!(\Ri_{\w})$ by Lemma
\ref{decay implies laplace transform}, and Lemma \ref{laplace
transform} yields $\mu_{k,\epsilon}\in\eM_{\w}(\R_{+})$ with
$f_{k,\epsilon}=\widehat{\mu_{k,\epsilon}}$. Now
\begin{align*} 
f_{k,\epsilon}'(z)&=\lim_{h\rightarrow
0}\frac{f_{k,\epsilon}(z+h)-f_{k,\epsilon}(z)}{h}=\lim_{h\rightarrow
0}\int_{0}^{\infty}\frac{e^{-(z+h)t}-e^{-zt}}{h}\,\mu_{k,\epsilon}(\ud{t})\\ 
&=-\int_{0}^{\infty}te^{-zt}\,\mu_{k,\epsilon}(\ud{t})
=-\widehat{t\mu_{k,\epsilon}}(z)
\end{align*} 
for $z\in\Ri_{\w}$, by the Dominated Convergence Theorem. Hence
$f_{k,\epsilon}'(A)=-T_{t\mu_{k,\epsilon}}$, and Proposition
\ref{transference m-bounded} and Lemma \ref{laplace transform} imply
\begin{align*} 
\norm{f_{k,\epsilon}'(A)}\leq
\frac{M^{2}}{\abs{\w}}\,p^{-1/p}(p')^{-1/p'}\norm{f_{k,\epsilon}}_{\AM_{p}^{X}}.
\end{align*} 
Furthermore,
$\sup_{k,\epsilon}\norm{f_{k,\epsilon}}_{\AM_{p}^{X}}\leq
\norm{f}_{\AM_{p}^{X}}$. In particular, the $f_{k,\epsilon}$ are
uniformly bounded. By the Cauchy integral formula, so are the
derivatives $f'_{k,\epsilon}$ on every smaller half-plane. Since
$f_{k,\epsilon}'(z)\rightarrow f'(z)$ for all $z\in\Ri_{\w}$ as
$k\rightarrow \infty$, $\epsilon\rightarrow 0$, the Convergence Lemma
yields $f'(A)\in\La(X)$ with
\begin{align*} 
||f'(A)||\leq
\frac{M^{2}}{\abs{\w}}\,p^{-1/p}(p')^{-1/p'}\norm{f}_{\AM_{p}^{X}},
\end{align*} 
which is \eqref{strong m-bounded} for $m=1$.

For (ii) $\Rightarrow$ (i) assume that $A$ has a strong $m$-bounded
$\AM_{p}$-calculus of type $0$ for some $p\in[1,\infty]$ and some
$m\in\N$. Then
\begin{align*} 
e_{-t}\in\AM_{1}\!(\Ri_{\w})\subseteq \AM_{p}(\Ri_{\w})
\end{align*} 
for all $t> 0$ and $\w<0$, with
\begin{align*} 
\norm{e_{-t}}_{\AM_{p}\!(\Ri_{\w})}\leq
\norm{e_{-t}}_{\AM_{1}\!(\Ri_{\w})}=e^{-t\w}.
\end{align*} 
Now $(e_{-t})^{(m)}=(-t)^{m}e_{-t}$ implies
\begin{align*} 
t^{m}\norm{e^{-tA}}\leq \frac{C}{\abs{\w}^{m}}e^{-t\w}.
\end{align*} 
Letting $\w:=-\frac{1}{t}$ and using Lemma 2.5 in
\cite{Batty-Haase-Mubeen} yields the required statement.
\end{proof}

If $X=H$ is a Hilbert space then Plancherel's theorem yields the 
following result, which is a generalization of\
Theorem 7.1 in \cite{Batty-Haase-Mubeen}. It contains
part c) of Theorem \ref{Hilbert space theorem}.

\begin{corollary}\label{specialize to Hilbert 2} 
Let $A$ be a densely defined operator of half-plane type $0$ on a 
Hilbert space $H$. Then the following assertions are equivalent:
\begin{enumerate}
\item[\rm (i)] $-A$ is the generator of a
bounded $C_{0}$-semigroup on $H$.
\item[\rm (ii)] $A$ has a strong $m$-bounded
$\HT^{\infty}$-calculus of type $0$ for 
some/all $m\in\N$.
\end{enumerate} 
In particular, if $-A$ generates a bounded
$C_{0}$-semigroup then $A$ has an $m$-bounded
$\HT^{\infty}\!(\Ri_{\w})$-calculus for all $\w<0$ and
$m\in\N$.
\end{corollary}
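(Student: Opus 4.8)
The plan is to obtain Corollary~\ref{specialize to Hilbert 2} as an immediate specialization of Theorem~\ref{main result on m-bounded calculus} to the Hilbert space case, using the identification $\AM_{2}^{H}(\Ri_{\w})=\HT^{\infty}(\Ri_{\w})$ with equality of norms that was recorded just before Corollary~\ref{specialize to Hilbert 1}. First I would invoke Plancherel's theorem: for $X=H$ a Hilbert space and $p=2$, the operator-norm on $\Ma_{2}(H)$ coincides with the $\Ell^{\infty}(\R;\La(H))$-norm, so $m\in\Ma_{2}(H)$ for every $m\in\Ell^{\infty}$ and $\norm{m}_{\Ma_{2}(H)}=\norm{m}_{\infty}$. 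Consequently $f\mapsto f(\w+i\cdot)$ shows $\AM_{2}^{H}(\Ri_{\w})=\HT^{\infty}(\Ri_{\w})$ isometrically, and under this identification a strong $m$-bounded $\AM_{2}^{H}$-calculus of type $0$ is literally the same thing as a strong $m$-bounded $\HT^{\infty}$-calculus of type $0$, and likewise for the $m$-bounded $\AM_{2}^{H}(\Ri_{\w})$-calculus versus the $m$-bounded $\HT^{\infty}(\Ri_{\w})$-calculus.

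Given this dictionary, the equivalence (i)$\Leftrightarrow$(ii) in Corollary~\ref{specialize to Hilbert 2} follows by applying Theorem~\ref{main result on m-bounded calculus} with $X=H$ and $p=2$. For the implication (i)$\Rightarrow$(ii), the hypothesis that $-A$ generates a bounded $C_{0}$-semigroup gives, by the theorem, a strong $m$-bounded $\AM_{2}^{H}$-calculus of type $0$ for every $m\in\N$, hence a strong $m$-bounded $\HT^{\infty}$-calculus of type $0$ for every $m$; in particular (ii) holds for some (indeed all) $m\in\N$. For (ii)$\Rightarrow$(i), a strong $m$-bounded $\HT^{\infty}$-calculus of type $0$ for some $m$ is a strong $m$-bounded $\AM_{2}^{H}$-calculus of type $0$ for that $m$ and for $p=2$, so Theorem~\ref{main result on m-bounded calculus} forces $-A$ to generate a bounded $C_{0}$-semigroup. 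The final "in particular" clause is just the corresponding clause of Theorem~\ref{main result on m-bounded calculus} read through the same identification.

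There is essentially no obstacle here: the content is entirely carried by Theorem~\ref{main result on m-bounded calculus}, and the only thing to verify is the harmless but necessary observation that the Hilbert space $\Ell^{2}$-Fourier multiplier algebra is all of $\Ell^{\infty}$ with equality of norms (Plancherel), so that no geometric restriction on $H$ intervenes and the abstract statement simplifies to the classical $\HT^{\infty}$-language. If one wanted to be thorough one would also note that $\AM_{1}^{H}=\AM_{\infty}^{H}$ is contractively contained in $\AM_{2}^{H}$, which is used implicitly in the (ii)$\Rightarrow$(i) direction of the underlying theorem when estimating $\norm{e_{-t}}_{\AM_{2}}\le\norm{e_{-t}}_{\AM_{1}}=e^{-t\w}$, but this is already subsumed in the proof of Theorem~\ref{main result on m-bounded calculus} and need not be repeated. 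Thus the proof is a one- or two-sentence reduction, and I would write it as such.

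\begin{proof}
Since $H$ is a Hilbert space, Plancherel's theorem gives $\Ma_{2}(H)=\Ell^{\infty}(\R;\La(H))$ with equality of norms, and hence $\AM_{2}^{H}(\Ri_{\w})=\HT^{\infty}(\Ri_{\w})$ isometrically for every $\w\in\R$. Under this identification, the notions of a (strong) $m$-bounded $\AM_{2}^{H}$-calculus and a (strong) $m$-bounded $\HT^{\infty}$-calculus coincide, so the assertion is precisely Theorem~\ref{main result on m-bounded calculus} in the case $X=H$, $p=2$.
\end{proof}
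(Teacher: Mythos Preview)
Your proposal is correct and matches the paper's approach exactly: the paper does not give a separate proof but simply remarks that Plancherel's theorem yields $\AM_{2}^{H}=\HT^{\infty}$ with equality of norms, so the corollary is the $X=H$, $p=2$ case of Theorem~\ref{main result on m-bounded calculus}. Your short formal proof is precisely what is intended.
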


\section{Semigroups on UMD spaces}
\label{semigroups on UMD spaces}

A Banach space $X$ is a \emph{UMD space} if the function
$t\mapsto \mathrm{sgn}(t)$ is a bounded $\Ell^{2}(X)$-Fourier
multiplier. For $\w\in\R$ we let 
\begin{align*}
\HT^{\infty}_{1}\!(\Ri_{\w}):=\left\{f\in\HT^{\infty}\!(\Ri_{\w})\mid
(z-\w)f'(z)\in\HT^{\infty}\!(\Ri_{\w})\right\}
\end{align*} 
be the \emph{analytic Mikhlin algebra} on $\Ri_{\w}$, a Banach algebra 
endowed with the norm
\begin{align*}
\norm{f}_{\HT^{\infty}_{1}}=\norm{f}_{\HT^{\infty}_{1}\!(\Ri_{\w})}:=\sup_{z\in
\Ri_{\w}}\abs{f(z)}+\abs{(z-\w)f'(z)}\quad\quad(f\in\HT_{1}^{\infty}\!(\Ri_{\w})).
\end{align*}
The vector-valued Mikhlin multiplier theorem 
\cite[Theorem E.6.2]{Haase1} and Lemma \ref{alternative description} 
yield the continuous inclusion
\begin{align*}
\HT^{\infty}_{1}\!(\Ri_{\w})\hookrightarrow
\AM_{p}^{X}\!(\Ri_{\w})
\end{align*}
for each $p\in(1,\infty)$, if $X$ is a UMD space. Combining this with 
Theorems \ref{main theorem on functional calculus with exponential decay} and
\ref{main result on m-bounded calculus} and Corollaries 
\ref{composition with semigroup operators} and
\ref{sufficient for bounded calculus} proves the following theorem.

\begin{theorem}\label{UMD space theorem}
Let $-A$ generate a $C_{0}$-semigroup $(T(t))_{t\in\R_{+}}$ of type $(M,0)$ 
on a UMD space $X$. Then the following assertions hold. 

\begin{enumerate}
\item[\rm a)] 
For each $p\in(1,\infty)$ there exists a constant $c_{p}=c(p,X)\geq 0$ 
such that the following holds. Let $\tau,\w>0$. 
Then $f(A)\in\La(X)$ with
\begin{align*}
\norm{f(A)} \leq\begin{cases} c_{p}\,M^{2}\abs{\log(\w
\tau)}\;\norm{f}_{\HT^{\infty}_{1}} & \text{if }\,\, \w\tau\leq
\min\left\{\frac{1}{p},\frac{1}{p'}\right\},\\ 2c_{p}M^{2}e^{-\w
\tau}\norm{f}_{\HT^{\infty}_{1}} & \text{if }\,\, \w\tau>
\min\left\{\frac{1}{p},\frac{1}{p'}\right\}
\end{cases}
\end{align*}
for all $f\in\HT^{\infty}_{1}\!(\Ri_{-\w})\cap e_{-\tau}\HT^{\infty}\!(\Ri_{-\w})$,
and $f(A)T(\tau)\in \La(X)$ with 
\begin{align*}
 \norm{f(A)T(\tau)} \leq\begin{cases} c_{p}\,M^{2}\abs{\log(\w
\tau)}e^{\w\tau}\norm{f}_{\HT^{\infty}_{1}} & \text{if }\,\, \w\tau\leq
\min\left\{\frac{1}{p},\frac{1}{p'}\right\},\\
 2c_{p}M^{2}\norm{f}_{\HT^{\infty}_{1}} & \text{if }\,\, \w\tau>
\min\left\{\frac{1}{p},\frac{1}{p'}\right\}
\end{cases}
\end{align*} 
for all $f\in \HT^{\infty}_{1}\!(\Ri_{-\w})$.
\item[\rm b)] If 
\begin{align*} 
{\bigcup}_{\tau>0}\ran(T(\tau))=X,
\end{align*} 
then $A$ has a bounded $\HT^{\infty}_{1}\!(\Ri_{\w})$-calculus for
all $\w<0$.
\item[\rm c)] $A$ has a strong $m$-bounded $\HT^{\infty}_{1}$-calculus 
of type $0$ for all $m\in\N$.
\end{enumerate}
\end{theorem}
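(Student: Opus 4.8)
The plan is to derive Theorem \ref{UMD space theorem} as a direct consequence of the continuous inclusion $\HT^{\infty}_{1}\!(\Ri_{\w})\hookrightarrow\AM_{p}^{X}\!(\Ri_{\w})$ established just above the statement, simply by feeding this inclusion into the results already proved for the analytic multiplier algebra. Fix $p\in(1,\infty)$ and let $c(p,X)$ denote the norm of this embedding, so that $\norm{g}_{\AM_{p}^{X}}\le c(p,X)\,\norm{g}_{\HT^{\infty}_{1}}$ for all $g\in\HT^{\infty}_{1}\!(\Ri_{\w})$ and all $\w\in\R$ (the embedding constant does not depend on $\w$, as one sees by shifting, since both norms are shift-covariant). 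The three parts a), b), c) then correspond respectively to Theorem \ref{main theorem on functional calculus with exponential decay} together with Corollary \ref{composition with semigroup operators}; to Corollary \ref{sufficient for bounded calculus}; and to Theorem \ref{main result on m-bounded calculus}.

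For part a): given $f\in\HT^{\infty}_{1}\!(\Ri_{-\w})\cap e_{-\tau}\HT^{\infty}\!(\Ri_{-\w})$, the inclusion gives $f\in\AM_{p}^{X}\!(\Ri_{-\w})$, and since $e_{-\tau}\HT^{\infty}\!(\Ri_{-\w})\cap\AM_{p}^{X}\!(\Ri_{-\w})=\AM_{p,\tau}^{X}\!(\Ri_{-\w})$ by Lemma \ref{functions of exponential decay}, we have $f\in\AM_{p,\tau}^{X}\!(\Ri_{-\w})$. Applying Theorem \ref{main theorem on functional calculus with exponential decay} and bounding $\norm{f}_{\AM_{p}^{X}}\le c(p,X)\norm{f}_{\HT^{\infty}_{1}}$ yields the first displayed estimate with $c_{p}:=c_{p}^{\mathrm{old}}\cdot c(p,X)$ in the $\w\tau$ small case, and with the factor $2c_{p}M^{2}e^{-\w\tau}$ in the other case (absorbing $c(p,X)$ into the constant, which accounts for the extra $c_p$ appearing in the second branch of the statement compared with the Hilbert-space version). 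The composition statement $f(A)T(\tau)\in\La(X)$ follows identically from Corollary \ref{composition with semigroup operators} with the same substitution. For part b), the hypothesis $\bigcup_{\tau>0}\ran(T(\tau))=X$ together with Corollary \ref{sufficient for bounded calculus} gives a bounded $\AM_{p}^{X}\!(\Ri_{\w})$-calculus for every $\w<0$; composing with the embedding $\HT^{\infty}_{1}\!(\Ri_{\w})\hookrightarrow\AM_{p}^{X}\!(\Ri_{\w})$ gives a bounded $\HT^{\infty}_{1}\!(\Ri_{\w})$-calculus. For part c), $-A$ generates a bounded $C_{0}$-semigroup, so by Theorem \ref{main result on m-bounded calculus} (the implication (i)$\Rightarrow$(ii)) $A$ has a strong $m$-bounded $\AM_{p}^{X}$-calculus of type $0$ for all $m\in\N$; one must only check that this strong $m$-boundedness transfers across the embedding. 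Since $f\in\HT^{\infty}_{1}\!(\Ri_{\w})$ implies $f\in\AM_{p}^{X}\!(\Ri_{\w})$ with $\norm{f}_{\AM_{p}^{X}\!(\Ri_\w)}\le c(p,X)\norm{f}_{\HT^{\infty}_{1}\!(\Ri_\w)}$ and the bound \eqref{strong m-bounded} for the $\AM_{p}^{X}$-calculus is exactly $\norm{f^{(m)}(A)}\le C(\w_{0}-\w)^{-m}\norm{f}_{\AM_{p}^{X}\!(\Ri_\w)}$, we get the same inequality with $\norm{f}_{\HT^{\infty}_{1}}$ on the right and constant $C\cdot c(p,X)$, which is \eqref{strong m-bounded} for the $\HT^{\infty}_{1}$-calculus.

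There is essentially no obstacle here beyond bookkeeping: every analytic step has already been done, and the proof is a matter of invoking the embedding and renaming constants. The one point deserving a sentence of care is that the embedding constant $c(p,X)$ is uniform in the half-plane parameter $\w$ — this is true because the Mikhlin theorem \cite[Theorem E.6.2]{Haase1} is applied after the shift $z\mapsto z+\w$ that reduces to $\C_{+}$, and both $\norm{\cdot}_{\HT^{\infty}_{1}}$ and $\norm{\cdot}_{\AM_{p}^{X}}$ are invariant under this shift (cf.\ the argument in the proof of Lemma \ref{laplace transform}). Having fixed this, parts a), b), c) follow line by line from Theorems \ref{main theorem on functional calculus with exponential decay} and \ref{main result on m-bounded calculus} and Corollaries \ref{composition with semigroup operators} and \ref{sufficient for bounded calculus}, as indicated in the paragraph preceding the statement, so no further work is required.
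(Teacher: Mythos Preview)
Your proposal is correct and follows exactly the approach of the paper, which simply states that the theorem follows by combining the continuous inclusion $\HT^{\infty}_{1}\!(\Ri_{\w})\hookrightarrow\AM_{p}^{X}\!(\Ri_{\w})$ with Theorems \ref{main theorem on functional calculus with exponential decay} and \ref{main result on m-bounded calculus} and Corollaries \ref{composition with semigroup operators} and \ref{sufficient for bounded calculus}. Your write-up is actually more detailed than the paper's, in particular in making explicit the uniformity in $\w$ of the embedding constant needed for part c).
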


\begin{remark}\label{domain inclusion for UMD spaces} 
Theorem \ref{composition with fractional powers} yields the domain 
inclusion $\dom(A^\alpha) \subseteq \dom(f(A))$ for all $\alpha\in\C_{+}$,
$\w<0$ and $f\in\HT^{\infty}_{1}(\Ri_{\w})$, on a UMD space $X$. 
However, this inclusion in fact holds true on a general Banach space $X$. 
Indeed, for $\lambda\in\C$ with $\Real(\lambda)<0$, Bernstein's Lemma 
\cite[Proposition 8.2.3]{Arendt-Batty-Hieber-Neubrander} implies 
$\frac{f(z)}{(\lambda-z)^{\alpha}}\in\AM_{1}\!(\C_{+})$, hence
$f(A)(\lambda-A)^{-\alpha}\in\La(X)$ and $\dom(A^\alpha) \subseteq \dom(f(A))$. 
An estimate
\begin{align*} 
\norm{f(A)(\lambda -A)^{-\alpha}} \le C\norm{f}_{\HT_{1}^\infty(\Ri_{\w})}
\end{align*}
then follows from an application of the Closed Graph Theorem and the 
Convergence Lemma.
\end{remark}

\begin{remark}\label{inclusion in the mikhlin algebra remark}
To apply Theorem \ref{UMD space theorem} one can use the continuous inclusion 
\begin{align}\label{inclusion in mikhlin algebra}
\HT^{\infty}\!(\Ri_{\w}\cup(S_{\ph}+a))\subseteq \HT^{\infty}_{1}\!(\Ri_{\w'})
\end{align}
for $\w'>\w$, $a\in\R$ and $\ph\in(\pi/2,\pi]$. Here $\Ri_{\w}\cup(S_{\ph}+a)$ is the union of $\Ri_{\w}$ and 
the translated sector $S_{\ph}+a$, where
\begin{align*}
S_{\ph}:=\left\{z\in\C\mid \abs{\mathrm{arg}(z)}<\ph\right\}.
\end{align*}
Indeed, to derive \eqref{inclusion in mikhlin algebra} it suffices to let $a=0$,
and using Cauchy's integral formula as in \cite[Lemma 8.2.6]{Haase1} yields 
the desired result.
\end{remark}

\section{$\gamma$-Bounded semigroups}
\label{gamma-bounded semigroups}

The geometry of the underlying Banach space $X$ played an essential
role in the results of Sections \ref{functional calculus for semigroup
generators} and \ref{m-bounded functional calculus} in the form of properties
of the analytic multiplier algebra $\AM_{p}^{X}$. To wit, 
in order to identify non-trivial
functions in $\AM_{p}^{X}$ one needs a geometric assumption on  $X$, 
for instance that it is a Hilbert or a UMD space. In this section we take a
different approach and make additional assumptions on the semigroup
instead of the underlying space. We show that if the semigroup in
question is {\em $\gamma$-bounded} then one can recover the Hilbert space
results on an arbitrary Banach space $X$.

For this section we assume the reader to be familiar with the basics
of the theory of $\gamma$-radonifying operators and
$\gamma$-boundedness as collected in the survey article by van Neerven
\cite{vanNeerven}. We use terminology and results from
\cite{Haase6}.

Let $H$ be a Hilbert space and $X$ a Banach space. A linear operator
$T:H\rightarrow X$ is \emph{$\gamma$-summing} if
\begin{align*}
\norm{T}_{\gamma}:=\sup_{F}\left(\mathbb{E}\norm{\sum_{h\in
F}\gamma_{h} Th }_{X}^{2}\right)^{1/2}<\infty,
\end{align*} 
where the supremum is taken over all finite orthonormal
systems $F\subseteq H$ and $(\gamma_{h})_{h\in F}$ is an independent
collection of complex-valued standard Gaussian random variables on
some probability space. Endow
\begin{align*} 
\gamma_{\infty}(H;X):=\left\{T:H\rightarrow X\mid T
\textrm{ is }\gamma\textrm{-summing}\right\}
\end{align*}
 with the norm $\norm{\cdot}_{\gamma}$ and let the space
$\gamma(H;X)$ of all \emph{$\gamma$-radonifying operators} be the
closure in $\gamma_{\infty}(H;X)$ of the finite-rank operators
$H\otimes X$.

For a measure space $(\Omega,\mu)$  let $\gamma(\Omega;X)$
(resp.~$\gamma_{\infty}(\Omega;X)$) be the space of all weakly 
$\Ell^{2}$-functions $f:\Omega\rightarrow X$ for which the integration operator
$J_{f}:\Ell^{2}(\Omega)\rightarrow X$,
\begin{align*} 
J_{f}(g):=\int_{\Omega}g\cdot f\,
\ud\mu\quad\quad\quad(g\in \Ell^{2}(\Omega)),
\end{align*} 
is $\gamma$-radonifying ($\gamma$-summing), and endow it
with the norm $\norm{f}_{\gamma}:=\norm{J_{f}}_{\gamma}$.

A collection $\mathcal{T}\subseteq \La(X)$ is \emph{$\gamma$-bounded}
if there exists a constant $C\geq 0$ such that
\begin{align*}
\left(\mathbb{E}\norm{\sum_{T\in\mathcal{T}'}\gamma_{T}Tx_{T}}^{2}\right)^{1/2}\leq
C\left(\mathbb{E}\norm{\sum_{T\in\mathcal{T}'}\gamma_{T}x_{T}}^{2}\right)^{1/2}
\end{align*} 
for all finite subsets $\mathcal{T}'\subseteq
\mathcal{T}$, sequences $(x_{T})_{T\in\mathcal{T}'}\subseteq X$ and
independent complex-valued standard Gaussian random variables
$(\gamma_{T})_{T\in\mathcal{T}'}$. The smallest such $C$ is the
\emph{$\gamma$-bound} of $\mathcal{T}$ and is denoted by $\llbracket
T\rrbracket^{\gamma}$. Every $\gamma$-bounded collection is uniformly
bounded with supremum bound less than or equal to the $\gamma$-bound,
and the converse holds if $X$ is a Hilbert space.

An important result involving $\gamma$-boundedness is the
\emph{multiplier theorem}. We state a version that is tailored to our purposes.
Given a Banach space $Y$, a function $g:\R\rightarrow Y$ is \emph{piecewise
$W^{1,\infty}$} if $g\in W^{1,\infty}(\R\setminus\left\{a_{1},\ldots,
a_{n}\right\};Y)$ for some finite set $\left\{a_{1},\ldots,
a_{n}\right\}\subseteq \R$.

\begin{theorem}[Multiplier Theorem]\label{multiplier theorem} 
Let $X$
and $Y$ be Banach spaces and $T:\R\rightarrow \La(X,Y)$ a strongly
measurable mapping such that
\begin{align*} 
\mathcal{T}:=\left\{T(s)\mid s\in\R\right\}
\end{align*} 
is $\gamma$-bounded. Suppose furthermore that there
exists a dense subset $D\subseteq X$ such that $s\mapsto T(s)x$ is
piecewise $W^{1,\infty}$ for all $x\in D$. Then the multiplication
operator
\begin{align*} 
\mathcal{M}_{T}:\Ell^{2}(\R)\otimes X\rightarrow
\Ell^{2}(\R;Y)\quad\quad\quad \mathcal{M}_{T}(f\otimes
x)=f(\cdot)T(\cdot)x
\end{align*} 
extends uniquely to a bounded operator
\begin{align*} 
\mathcal{M}_{T}:\gamma(\Ell^{2}(\R);X)\rightarrow
\gamma(\Ell^{2}(\R);Y)
\end{align*} 
with $\norm{\mathcal{M}_{T}}\leq \llbracket
\mathcal{T}\rrbracket^{\gamma}$.
\end{theorem}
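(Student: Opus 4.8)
The plan is to prove the Multiplier Theorem by reducing it, via a standard density argument, to a pointwise estimate on the Gaussian quadratic sums that define the $\gamma(\Ell^{2}(\R);\cdot)$-norms. First I would recall that by definition $\norm{f}_{\gamma}=\norm{J_{f}}_{\gamma}$ where $J_{f}\colon\Ell^{2}(\R)\to X$ is the integration operator, and that $\Ell^{2}(\R)\otimes X$ is dense in $\gamma(\Ell^{2}(\R);X)$; hence it suffices to establish the bound $\norm{\mathcal{M}_{T}F}_{\gamma}\le\llbracket\mathcal{T}\rrbracket^{\gamma}\norm{F}_{\gamma}$ for $F=\sum_{k=1}^{n}g_{k}\otimes x_{k}$ with the $g_{k}\in\Ell^{2}(\R)$ mutually orthogonal (by Gram--Schmidt we may orthogonalize, absorbing the change of basis into the $x_{k}$) and, after a further approximation, with the $x_{k}$ taken from the dense set $D$ on which $s\mapsto T(s)x$ is piecewise $W^{1,\infty}$.

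Next, for such a simple $F$ I would write the $\gamma$-norm of $\mathcal{M}_{T}F$ as a supremum over partitions: choosing a fine enough partition of $\R$ into intervals $I_{j}$ and sampling points $s_{j}\in I_{j}$, the function $s\mapsto\sum_{k}g_{k}(s)T(s)x_{k}$ is approximated in $\gamma(\Ell^{2}(\R);Y)$ by the step function $\sum_{j}\mathbf{1}_{I_{j}}\otimes\big(\sum_{k}g_{k}(s_{j})T(s_{j})x_{k}\big)$. The piecewise $W^{1,\infty}$ hypothesis on $s\mapsto T(s)x_{k}$ (for finitely many $x_{k}\in D$) is exactly what makes this Riemann-type approximation converge in the $\gamma$-norm: it gives uniform continuity away from the finitely many jump points, and the jump points contribute vanishingly as the mesh shrinks. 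For the step function the $\gamma(\Ell^{2};Y)$-norm is literally a Gaussian sum $\big(\mathbb{E}\norm{\sum_{j}\gamma_{j}\abs{I_{j}}^{1/2}\sum_{k}g_{k}(s_{j})T(s_{j})x_{k}}^{2}\big)^{1/2}$, and since the vectors $\abs{I_{j}}^{1/2}\sum_{k}g_{k}(s_{j})x_{k}\in X$ are attached to the distinct operators $T(s_{j})\in\mathcal{T}$, the definition of $\gamma$-boundedness applies directly and yields the factor $\llbracket\mathcal{T}\rrbracket^{\gamma}$, leaving behind precisely the Gaussian sum approximating $\norm{F}_{\gamma}$. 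Passing to the limit over finer partitions on both sides gives $\norm{\mathcal{M}_{T}F}_{\gamma}\le\llbracket\mathcal{T}\rrbracket^{\gamma}\norm{F}_{\gamma}$, and then density of $\Ell^{2}(\R)\otimes D$ in $\gamma(\Ell^{2}(\R);X)$ extends the inequality to all of $\gamma(\Ell^{2}(\R);X)$ and shows $\mathcal{M}_{T}$ is well defined as claimed.

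The main obstacle is the approximation step: one must show that replacing $s\mapsto\sum_{k}g_{k}(s)T(s)x_{k}$ by its step-function samples converges in the $\gamma(\Ell^{2}(\R);Y)$-norm, not merely pointwise or in $\Ell^{2}$. This is where the regularity assumption is indispensable, and where one needs the standard facts that the $\gamma$-norm dominates the $\Ell^{2}$-norm weakly, that it behaves well under the "ideal property" (composition with bounded operators on $\Ell^{2}$, here the conditional-expectation-type projections onto step functions), and that for functions with values in a fixed finite-dimensional subspace — which is the situation here, as everything lives in $\mathrm{span}\{T(s)x_{k}\}$ modulo an $\varepsilon$ — the $\gamma$-norm is comparable to an explicit $\Ell^{2}$-type quantity for which Riemann-sum convergence is classical. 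Once this convergence is in hand the rest is bookkeeping. I would cite the relevant properties of $\gamma(\Ell^{2};X)$ from \cite{vanNeerven} and \cite{Haase6} rather than reprove them, since the statement is explicitly said to be a version of a known multiplier theorem tailored to the present needs.
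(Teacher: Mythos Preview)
Your approach differs from the paper's in a significant structural way, and there is a gap in your justification of the key approximation step.

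\textbf{What the paper actually does.} The paper does not re-derive the $\gamma$-boundedness estimate at all. It cites \cite[Theorem 5.2]{vanNeerven} for the fact that $\mathcal{M}_{T}$ extends to a bounded operator into $\gamma_{\infty}(\Ell^{2}(\R);Y)$ with norm at most $\llbracket\mathcal{T}\rrbracket^{\gamma}$. The only new content is showing that the range actually lands in the smaller space $\gamma(\Ell^{2}(\R);Y)$, and this is precisely where the piecewise $W^{1,\infty}$ hypothesis enters. For $x\in D$ and $f\in\Ce_{c}(\R)$, the paper checks that on each interval between the jump points the integrals $\int\norm{f}_{\Ell^{2}(s,a_{j+1})}\norm{(T(s)x)'}\,\ud s$ are finite and then invokes \cite[Corollary 6.3]{Haase6} to conclude $f(\cdot)T(\cdot)x\in\gamma(\R;Y)$. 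Density of $\Ce_{c}(\R)\otimes D$ finishes the argument. So the paper isolates the $\gamma$ versus $\gamma_{\infty}$ issue as the real point of the theorem.

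\textbf{The gap in your argument.} Your Riemann-sum strategy is a reasonable way to reprove the Kalton--Weis multiplier theorem from scratch, but the justification you give for the approximation step is wrong. You claim that ``everything lives in $\mathrm{span}\{T(s)x_{k}\}$ modulo an $\varepsilon$'' and hence takes values in a fixed finite-dimensional subspace. This is false: the curve $s\mapsto T(s)x_{k}$ in $Y$ has no reason to have finite-dimensional (or even precompact) range, and the span over all $s\in\R$ is typically infinite-dimensional. The step functions do have finite-dimensional range, but to show they approximate the original function \emph{in $\gamma$-norm} you cannot appeal to finite-dimensionality of the target; you need exactly the kind of $W^{1,\infty}$-based integration argument that \cite[Corollary 6.3]{Haase6} encapsulates (roughly: write $T(s)x-T(s_{j})x=\int_{s_{j}}^{s}(T(r)x)'\,\ud r$ and control the resulting integral operator on $\Ell^{2}$). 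Once you supply that, your route and the paper's converge to the same underlying mechanism, but the paper's modular decomposition makes the role of the regularity hypothesis much clearer.
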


\begin{proof} 
That $\mathcal{M}_{T}$ extends uniquely to a bounded
operator into $\gamma_{\infty}(\Ell^{2}(\R);Y)$ with
$\norm{\mathcal{M}_{T}}\leq \llbracket \mathcal{T}\rrbracket^{\gamma}$ is
the content of Theorem 5.2 in \cite{vanNeerven}. To see that in fact
$\textrm{ran}(\mathcal{M}_{T})\subseteq \gamma(\R;Y)$ we employ a
density argument. For $x\in D$ let $a_{1},\ldots, a_{n}\in \R$ be such
that $s\mapsto T(s)x\in W^{1,\infty}(\R\setminus\left\{a_{1},\ldots,
a_{n}\right\};Y)$, and set $a_{0}:=-\infty$, $a_{n+1}:=\infty$. Let
$f\in \Ce_{c}(\R)$ be given and note that
\begin{align*}
\int_{a_{j}}^{a_{j+1}}\norm{f}_{\Ell^{2}(s,a_{j+1})}\norm{T(s)'x}\,\ud{s}<\infty
\end{align*} 
for all $j\in \left\{1,\ldots, n\right\}$. Furthermore,
\begin{align*}
\int_{-\infty}^{a_{1}}\norm{f}_{\Ell^{2}(-\infty,s)}\norm{T(s)'x}\,\ud{s}<\infty.
\end{align*} 
Corollary 6.3 in \cite{Haase6} yields
$(\mathbf{1}_{(a_{j},a_{j+1})}f)(\cdot)T(\cdot)x\in \gamma(\R;Y)$
for all $0\leq j\leq n$,
hence $f(\cdot)T(\cdot)x\in \gamma(\R;Y)$. Since $\Ce_{c}(\R)\otimes
D$ is dense in $\Ell^{2}(\R)\otimes X$, which in turn is dense in
$\gamma(\Ell^{2}(\R);X)$, the result follows.
\end{proof}

We are now ready to prove a generalization of part a) of
Corollary \ref{specialize to Hilbert 1}. Recall that
\begin{align*}
e_{-\tau}\HT^{\infty}\!(\Ri_{\w})=\left\{f\in\HT^{\infty}\!(\Ri_{\w})\mid
f(z)=O(e^{-\tau \Real(z)})\textrm{ as }\abs{z}\rightarrow
\infty\right\}
\end{align*} 
for $\tau>0$, $\w\in\R$.

\begin{theorem}\label{main gamma-result on bounded calculus} 
There exists a universal constant $c\geq 0$ such that the following
holds. Let $-A$ generate a $\gamma$-bounded $C_{0}$-semigroup
$(T(t))_{t\in\R_{+}}$ with $M:=\llbracket
T\rrbracket^{\gamma}$ on a Banach space $X$, and let $\tau,\w>0$. 
Then $f(A)\in\La(X)$ with
\begin{align*}
 \norm{f(A)} \leq
\begin{cases} c\,M^{2}\abs{\log(\w
\tau)}\;\norm{f}_{\infty} & \text{if}\,\, \w\tau\leq
\text{$\frac{1}{2}$} \\ 2M^{2}e^{-\w
\tau}\norm{f}_{\infty} & \text{if}\,\, \w\tau>
\text{$\frac{1}{2}$}
\end{cases}
\end{align*} 
for all $f\in e_{-\tau}\HT^{\infty}\!(\Ri_{-\w})$.

In particular, $A$ has a bounded $e_{-\tau}\HT^{\infty}\!(\Ri_{-\w})$-calculus.
\end{theorem}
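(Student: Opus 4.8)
The plan is to run the reasoning behind Theorem~\ref{main theorem on functional calculus with exponential decay} and Corollary~\ref{specialize to Hilbert 1}.a) once more, but with the Bochner space $\Ell^{2}(\R;X)$ replaced throughout by the $\gamma$-space $\gamma(\Ell^{2}(\R);X)$ and the $\Ell^{2}$-Fourier multiplier bounds replaced by the Multiplier Theorem~\ref{multiplier theorem}. Recall that $M=\llbracket T\rrbracket^{\gamma}$ dominates $\sup_{t\ge 0}\norm{T(t)}$, so $-A$ generates a bounded $C_{0}$-semigroup, $A$ is densely defined of half-plane type $0$, and $f(A)$ is a well-defined closed operator for each $f\in\HT^{\infty}\!(\Ri_{-\w})$ by Lemma~\ref{decay implies laplace transform}.

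\emph{Step 1: a $\gamma$-transference estimate.} First I would establish the $\gamma$-analogue of Proposition~\ref{transference} for $p=2$: if $\nu\in\eM_{-\w}(\R_{+})$ with $\supp(\nu)\subseteq[\tau,\infty)$, then
\begin{align*}
\norm{T_{\nu}}\le M^{2}\,\eta(\w,\tau,2)\,\norm{\widehat{\nu}}_{\HT^{\infty}\!(\Ri_{-\w})}.
\end{align*}
As in the abstract transference principle of \cite[Section~2]{Haase6} (compare the proof of Proposition~\ref{transference}) one writes $T_{\nu}=P\circ L_{e_{\w}\nu}\circ\iota$, where $\iota(x):=[\,s\mapsto\mathbf{1}_{(-\infty,0]}(s)\,\psi(-s)T(-s)x\,]$, $Pg:=\int_{0}^{\infty}\ph(t)T(t)g(t)\,\ud t$, and $\psi,\ph\in\Ell^{2}(\R_{+})$ satisfy $\psi\ast\ph\equiv e_{-\w}$ on $[\tau,\infty)$; the identities $\nu=(\psi\ast\ph)e_{\w}\nu$ and $T(t)T(r-t)=T(r)$ yield this factorization exactly as before. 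Now $\iota$ and $P$ are controlled by the Multiplier Theorem~\ref{multiplier theorem}: the family $\{T(t)\mid t\ge 0\}$ is $\gamma$-bounded with bound $M$, and for $x$ in the dense set $\dom(A)$ the orbit $t\mapsto T(t)x$ lies in $W^{1,\infty}(\R_{+};X)$ (its derivative $-T(t)Ax$ has norm $\le M\norm{Ax}$), hence is piecewise $W^{1,\infty}$ on $\R$ after extension by zero; so $\mathcal{M}_{T}$ is bounded on $\gamma(\Ell^{2}(\R_{+});X)$ with norm $\le M$. Precomposing with $x\mapsto\psi\otimes x$ (of $\gamma$-norm $\norm{\psi}_{2}\norm{x}$) and the isometric reflection $\Ell^{2}(\R_{+})\hookrightarrow\Ell^{2}(\R)$ gives $\norm{\iota}\le M\norm{\psi}_{2}$, while composing $\mathcal{M}_{T}$ with the pairing $g\mapsto\int_{0}^{\infty}\ph(t)g(t)\,\ud t$ (the action of the operator associated to $g$ on $\ph$, of norm $\le\norm{\ph}_{2}$) gives $\norm{P}\le M\norm{\ph}_{2}$. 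Finally, the scalar convolution $L_{e_{\w}\nu}$ is, on $\gamma(\Ell^{2}(\R);X)$, the $\gamma$-extension of convolution by $e_{\w}\nu$ on $\Ell^{2}(\R)$; by the ideal property of $\gamma$-radonifying operators \cite{vanNeerven} together with Plancherel's theorem its norm is at most $\norm{\F(e_{\w}\nu)}_{\infty}=\norm{\widehat{\nu}}_{\HT^{\infty}\!(\Ri_{-\w})}$. Multiplying the three estimates and taking the infimum over admissible $\psi,\ph$ yields the claim.

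\emph{Step 2: passing to all of $e_{-\tau}\HT^{\infty}\!(\Ri_{-\w})$ and conclusion.} For $f\in e_{-\tau}\AM_{1}\!(\Ri_{-\w})$, Lemmas~\ref{functions of exponential decay} and~\ref{laplace transform} supply $\nu\in\eM_{-\w}(\R_{+})$ with $\supp(\nu)\subseteq[\tau,\infty)$ and $f=\widehat{\nu}$, so Step~1 gives $\norm{f(A)}\le M^{2}\eta(\w,\tau,2)\norm{f}_{\infty}$. For arbitrary $f\in e_{-\tau}\HT^{\infty}\!(\Ri_{-\w})$ I would regularize exactly as in the proof of Theorem~\ref{main theorem on functional calculus with exponential decay}: with $g_{k}(z):=\frac{k}{z+\w+k}$ and $f_{k,\epsilon}:=f(\cdot+\epsilon)g_{k}(\cdot+\epsilon)$, Lemma~\ref{decay implies laplace transform} gives $f_{k,\epsilon}\in e_{-\tau}\AM_{1}\!(\Ri_{-\w})$, while $\norm{g_{k}}_{\infty}\le 1$ yields $\sup_{k,\epsilon}\norm{f_{k,\epsilon}}_{\infty}\le\norm{f}_{\infty}$ and hence $\sup_{k,\epsilon}\norm{f_{k,\epsilon}(A)}<\infty$ by the previous paragraph. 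Since $f_{k,\epsilon}\to f$ pointwise on $\Ri_{-\w}$, the Convergence Lemma~\ref{convergence lemma} gives $f(A)\in\La(X)$ with $\norm{f(A)}\le M^{2}\eta(\w,\tau,2)\norm{f}_{\infty}$. Inserting the growth bounds $\eta(\w,\tau,2)=O(\abs{\log(\w\tau)})$ as $\w\tau\to 0$ and $\eta(\w,\tau,2)\le 2e^{-\w\tau}$ for $\w\tau>\tfrac12$ (cf.~\ref{growth estimate lemma} and the proof of Theorem~\ref{main theorem on functional calculus with exponential decay}) produces the stated case distinction. The last assertion is then immediate, since for fixed $\tau,\w>0$ the space $e_{-\tau}\HT^{\infty}\!(\Ri_{-\w})$ is a Banach algebra (a closed ideal in $\HT^{\infty}\!(\Ri_{-\w})$, as in Lemma~\ref{functions of exponential decay}) continuously embedded in $\HT^{\infty}\!(\Ri_{-\w})$, so the displayed estimate is precisely a bounded $e_{-\tau}\HT^{\infty}\!(\Ri_{-\w})$-calculus.

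\emph{Main obstacle.} The skeleton being fixed, the work concentrates on the two genuinely new points in Step~1. The first is checking that the Multiplier Theorem~\ref{multiplier theorem} applies to the orbit map, i.e.\ that the piecewise-$W^{1,\infty}$ hypothesis holds on a dense subspace; this is where $\gamma$-boundedness enters, via uniform boundedness of the semigroup together with $x\in\dom(A)$. The second, and the reason the final bound is in terms of $\norm{f}_{\infty}$ rather than a total-variation norm, is the identification of the norm of the scalar convolution operator $L_{e_{\w}\nu}$ on $\gamma(\Ell^{2}(\R);X)$ with $\norm{\widehat{\nu}}_{\HT^{\infty}}$, via the ideal property of $\gamma$-radonifying operators and Plancherel's theorem on $\Ell^{2}(\R)$; this plays the role that the identity $\AM_{2}^{H}=\HT^{\infty}$ plays in the Hilbert space case.
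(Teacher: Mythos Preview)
Your proposal is correct and follows essentially the same route as the paper: establish the $\gamma$-transference estimate $\norm{T_{\nu}}\le M^{2}\eta(\w,\tau,2)\norm{L_{e_{\w}\nu}}_{\La(\gamma(\R;X))}$ via the factorization $T_{\nu}=P\circ L_{e_{\w}\nu}\circ\iota$, bound $\iota$ and $P$ using the Multiplier Theorem~\ref{multiplier theorem} (with the piecewise-$W^{1,\infty}$ hypothesis verified on $\dom(A)$), bound the convolution operator via the ideal property of $\gamma(\Ell^{2}(\R);X)$ and Plancherel, and then feed this into the argument of Theorem~\ref{main theorem on functional calculus with exponential decay}. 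Your Step~2 simply spells out what the paper abbreviates as ``proceeds as in the proof of Theorem~\ref{main theorem on functional calculus with exponential decay}''; the two obstacles you single out are exactly the ones the paper addresses.
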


\begin{proof} 
We only need to show that the estimate
\eqref{transference estimate} in Proposition \ref{transference} can be
refined to
\begin{align}\label{gamma transference} 
\norm{T_{\mu}}\leq
M^{2}\eta(\w,\tau,2)\norm{L_{e_{\w}\mu}}_{\La(\gamma(\R;X))}
\end{align} 
for $\mu\in\eM_{-\w}(\R_{+})$ with
supp$(\mu)\subseteq [\tau,\infty)$. Then one uses that
\begin{align*}
\norm{L_{e_{\w}\mu}}_{\La(\gamma(\R;X))}\leq 
\norm{\widehat{e_{\w}\mu}}_{\HT^{\infty}\!(\C_{+})}
=\norm{\widehat{\mu}}_{\HT^{\infty}\!(\Ri_{-\w})},
\end{align*}
 by the ideal property of $\gamma(\Ell^{2}(\R);X)$
\cite[Theorem 6.2]{vanNeerven}, and proceeds as in the proof of Theorem
\ref{main theorem on functional calculus with exponential decay} to
deduce the desired result.

To obtain \eqref{gamma transference} we factorize $T_\mu$ as $T_{\mu}=P\circ
L_{e_{\w}\mu}\circ \iota$, where $\iota:X\rightarrow \gamma(\R;X)$ 
and $P:\gamma(\R;X)\rightarrow X$ are given by
\begin{align*} 
\iota x(s):=\psi(-s)T(-s)x\quad\quad&(x\in X, s\in\R),\\
Pg:=\int_{0}^{\infty}\ph(t)T(t)g(t)\,\ud{t}\quad\quad&(g\in \gamma(\R;X)),
\end{align*} 
for $\psi,\ph\in \Ell^{2}(\R_{+})$ such that
$\psi\ast\ph\equiv e_{-\w}$ on $[\tau,\infty)$. This factorization
follows as in Section 2 of \cite{Haase6} once we show that the maps
$\iota$ and $P$ are well-defined and bounded. To this end, first note
that $s\mapsto T(-s)x$ is piecewise $W^{1,\infty}$ for all $x$ in the
dense subset $\dom(A)\subseteq X$ and that
\begin{align*} 
\psi(-\cdot)\otimes x\in \Ell^{2}(-\infty,0)\otimes
X\subseteq \gamma(\Ell^{2}(\R);X).
\end{align*} 
Therefore Theorem \ref{multiplier theorem} yields $\iota
x\in \gamma(\R,X)$ with
\begin{align*} 
\norm{\iota x}_{\gamma}=\norm{J_{\iota x}}_{\gamma}\leq
M\norm{\psi(-\cdot)\otimes
x}_{\gamma}=M\norm{\psi}_{\Ell^{2}(\R_{+})}\norm{x}_{X}.
\end{align*} 
As for $P$, write
\begin{align*} 
Pg=\int_{0}^{\infty}\ph(t)T(t)g(t)\,\ud t=J_{Tg}(\ph)
\end{align*} 
and use Theorem \ref{multiplier theorem} once again to
see that $Tg\in \gamma(\R;X)$. Hence
\begin{align*} 
\norm{Pg}_{X}\leq
\norm{J_{Tg}}_{\gamma}\norm{\ph}_{\Ell^{2}(\R_{+})}\leq
M\norm{\ph}_{\Ell^{2}(\R_{+})}\norm{g}_{\gamma}.
\end{align*} 
Finally, estimating the norm of $T_{\mu}$ through
this factorization and taking the infimum over all $\psi$ and $\ph$
yields \eqref{gamma transference}.
\end{proof}

\begin{corollary}\label{gamma-version corollaries} 
Corollary \ref{specialize to Hilbert 1} generalizes to 
$\gamma$-bounded semigroups on arbitrary Banach spaces upon replacing 
the uniform bound $M$ of $T$ by $\llbracket T\rrbracket^{\gamma}$.
\end{corollary}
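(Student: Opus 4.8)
The plan is to prove Corollary \ref{gamma-version corollaries} by systematically revisiting each part of Corollary \ref{specialize to Hilbert 1} and checking that the arguments used there go through verbatim once the Hilbert-space machinery ($\AM_{2}^{H} = \HT^{\infty}$ via Plancherel) is replaced by the $\gamma$-bounded transference of Theorem \ref{main gamma-result on bounded calculus}. The point is that all three parts of Corollary \ref{specialize to Hilbert 1} were deduced from exactly three ingredients: the half-plane functional calculus estimate of Theorem \ref{main theorem on functional calculus with exponential decay} (specialized to $p=2$), its reformulation Corollary \ref{composition with semigroup operators}, and the fractional-power integral representation used in Theorem \ref{composition with fractional powers}. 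Theorem \ref{main gamma-result on bounded calculus} already supplies the $\gamma$-bounded analogue of the first ingredient with the constant $M = \llbracket T \rrbracket^{\gamma}$; so what remains is to propagate this through the other two.

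First I would record part a): it is simply a restatement of Theorem \ref{main gamma-result on bounded calculus} together with the observation, as in Corollary \ref{composition with semigroup operators}, that $f(A)T(\tau) = (e_{-\tau}f)(A)$ and $\norm{e_{-\tau}f}_{\infty} = e^{\w\tau}\norm{f}_{\infty}$ on $\Ri_{-\w}$ (here the $\HT^{\infty}$-norm of $e_{-\tau}f$ is controlled by the Maximum Principle exactly as in Lemma \ref{functions of exponential decay}). Next, part b): this follows from part a) just as Corollary \ref{sufficient for bounded calculus} followed from Corollary \ref{composition with semigroup operators}, namely $f(A)T(\tau) \in \La(X)$ forces $\ran(T(\tau)) \subseteq \dom(f(A))$, and if $\bigcup_{\tau>0}\ran(T(\tau)) = X$ then $\dom(f(A)) = X$, whence by the Closed Graph Theorem (combined with the Convergence Lemma \ref{convergence lemma} to verify closed graph, as in the cited proof) $A$ has a bounded $\HT^{\infty}\!(\Ri_{\w})$-calculus for all $\w<0$. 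Finally, part c): for $\Real(\lambda) < 0$ the semigroup $(e^{\lambda t}T(t))_{t\in\R_{+}}$ is still $\gamma$-bounded with the same $\gamma$-bound (multiplying each operator by the scalar $e^{\lambda t}$ of modulus $\le 1$ does not increase the $\gamma$-bound), so Corollary 3.3.6 of \cite{Haase1} gives the integral representation $(A-\lambda)^{-\alpha}x = \Gamma(\alpha)^{-1}\int_{0}^{\infty} t^{\alpha-1}e^{\lambda t}T(t)x\,\ud t$, and then the absolute-convergence estimate from the proof of Theorem \ref{composition with fractional powers} applies verbatim with the $\gamma$-version of Corollary \ref{composition with semigroup operators}, yielding $\dom((A-\lambda)^{\alpha}) \subseteq \dom(f(A))$ and $\norm{f(A)(A-\lambda)^{-\alpha}} \le CM^{2}\norm{f}_{\infty}$.

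The only genuinely new point — and the step I expect to require the most care — is the stability of $\gamma$-boundedness under the operations used: scalar multiplication by a bounded function of $t$ (needed for the shifted semigroup $e^{\lambda t}T(t)$ in part c), and the fact that $\{T(t) : t \in [0,\infty)\}$ $\gamma$-bounded implies the family appearing implicitly in the fractional-power integral is handled by the $\gamma$-multiplier theorem. Both are standard consequences of the Kahane contraction principle and Theorem \ref{multiplier theorem}, so I would state them as a short lemma (or cite \cite{vanNeerven}) rather than reprove them. Everything else is a transcription: no estimate in the chain from Theorem \ref{main gamma-result on bounded calculus} onward used the Hilbert-space structure of $X$ beyond what Theorem \ref{main gamma-result on bounded calculus} already encapsulates, so the corollary follows by assembling these observations.
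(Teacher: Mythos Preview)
Your proposal is correct and matches the paper's intent: the paper states this corollary without proof, since it follows from Theorem \ref{main gamma-result on bounded calculus} by the same arguments used to derive Corollary \ref{specialize to Hilbert 1} in Section~\ref{functional calculus for semigroup generators}. One small simplification: your ``genuinely new point'' about the $\gamma$-boundedness of the shifted semigroup $(e^{\lambda t}T(t))_{t\ge 0}$ is not actually needed for part~c) --- the integral representation from \cite[Corollary~3.3.6]{Haase1} only requires that this shifted semigroup be exponentially stable (which follows from $\Real(\lambda)<0$ and the uniform boundedness of $T$, itself implied by $\gamma$-boundedness), while the estimate on $\norm{f(A)T(t)}$ uses only the $\gamma$-boundedness of the original semigroup $T$ via the $\gamma$-version of Corollary~\ref{composition with semigroup operators}.
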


Theorem \ref{main result on m-bounded calculus} can be extended in an
almost identical manner to a $\gamma$-version.

\begin{theorem}\label{gamma-version m-bounded}
Let $-A$ generate a $\gamma$-bounded $C_{0}$-semigroup on 
a Banach space $X$. Then $A$ has a strong $m$-bounded 
$\HT^{\infty}$-calculus of type $0$ for all $m\in\N$.
\end{theorem}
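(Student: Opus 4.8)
The goal is to show that a $\gamma$-bounded $C_0$-semigroup generator $A$ (of half-plane type $0$) has a strong $m$-bounded $\HT^\infty$-calculus of type $0$, i.e.\ the estimate \eqref{strong m-bounded} with $\AM_p^X$ replaced by $\HT^\infty$. By Lemma \ref{1-bounded implies m-bounded} it suffices to treat $m=1$. The strategy is to mimic the proof of the implication (i)$\Rightarrow$(ii) in Theorem \ref{main result on m-bounded calculus}, but to feed in the $\gamma$-refined transference principle rather than the $\Ell^p$-based Proposition \ref{transference m-bounded}. The only genuinely new ingredient needed is a $\gamma$-analogue of Proposition \ref{transference m-bounded}.

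\textbf{Step 1: a $\gamma$-version of Proposition \ref{transference m-bounded}.} Fix $\w<0$ and $\mu\in\eM_{\w}(\R_{+})$. I would factor $T_{t\mu}=P\circ L_{e_{-\w}\mu}\circ\iota$ exactly as in the proof of Proposition \ref{transference m-bounded}, with $\psi=\ph=\mathbf{1}_{\R_{+}}e_{\w}$, but now with $\iota\colon X\to\gamma(\R;X)$ and $P\colon\gamma(\R;X)\to X$ as in the proof of Theorem \ref{main gamma-result on bounded calculus}. The Multiplier Theorem \ref{multiplier theorem} gives $\norm{\iota}\le M\norm{\psi}_{\Ell^2(\R_+)}$ and $\norm{P}\le M\norm{\ph}_{\Ell^2(\R_+)}$, where $M=\llbracket T\rrbracket^{\gamma}$, using that $s\mapsto T(-s)x$ is piecewise $W^{1,\infty}$ on the dense set $\dom(A)$. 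Since $\norm{e_{\w}}_{\Ell^2(\R_+)}^2=\tfrac{1}{2\abs{\w}}$, this yields
\begin{align*}
\norm{T_{t\mu}}\le\frac{M^2}{2\abs{\w}}\,\norm{L_{e_{-\w}\mu}}_{\La(\gamma(\R;X))}.
\end{align*}
By the ideal property of $\gamma(\Ell^2(\R);X)$ one has $\norm{L_{e_{-\w}\mu}}_{\La(\gamma(\R;X))}\le\norm{\widehat{e_{-\w}\mu}}_{\HT^\infty(\C_+)}=\norm{\widehat{\mu}}_{\HT^\infty(\Ri_{\w})}$, so in total $\norm{T_{t\mu}}\le\frac{M^2}{2\abs{\w}}\norm{\widehat{\mu}}_{\HT^\infty(\Ri_{\w})}$.

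\textbf{Step 2: approximation and the Convergence Lemma.} Given $\w<0$ and $f\in\HT^{\infty}\!(\Ri_{\w})$, I would set $g_k(z):=k/(z-\w+k)$ and $f_{k,\epsilon}:=f(\cdot+\epsilon)g_k(\cdot+\epsilon)$. By Lemma \ref{decay implies laplace transform} these lie in $\AM_1\!(\Ri_{\w})$, so Lemma \ref{laplace transform} produces $\mu_{k,\epsilon}\in\eM_{\w}(\R_+)$ with $f_{k,\epsilon}=\widehat{\mu_{k,\epsilon}}$; differentiating under the integral sign (Dominated Convergence) gives $f_{k,\epsilon}'(z)=-\widehat{t\mu_{k,\epsilon}}(z)$, whence $f_{k,\epsilon}'(A)=-T_{t\mu_{k,\epsilon}}$. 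Step 1 then yields $\norm{f_{k,\epsilon}'(A)}\le\frac{M^2}{2\abs{\w}}\norm{f_{k,\epsilon}}_{\HT^\infty(\Ri_{\w})}\le\frac{M^2}{2\abs{\w}}\norm{f}_{\HT^\infty(\Ri_{\w})}$, the last inequality by the maximum principle (since $\norm{g_k}_\infty\le1$). As $k\to\infty$, $\epsilon\to0$ the $f_{k,\epsilon}$ are uniformly bounded and converge pointwise to $f$, so the $f_{k,\epsilon}'$ are uniformly bounded on every smaller half-plane (Cauchy estimates) and converge pointwise to $f'$; the Convergence Lemma \ref{convergence lemma} gives $f'(A)\in\La(X)$ with $\norm{f'(A)}\le\frac{M^2}{2\abs{\w}}\norm{f}_{\HT^\infty(\Ri_{\w})}$. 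This is precisely \eqref{strong m-bounded} for $m=1$ and $p$ irrelevant, and Lemma \ref{1-bounded implies m-bounded} extends it to all $m$.

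\textbf{Main obstacle.} The delicate point is entirely in Step 1: verifying that the factorization $T_{t\mu}=P\circ L_{e_{-\w}\mu}\circ\iota$ is valid and that $\iota$, $P$ are well-defined bounded maps into/out of $\gamma(\R;X)$ — this is where the piecewise $W^{1,\infty}$ hypothesis of the Multiplier Theorem and the density of $\dom(A)$ are used, exactly as in Theorem \ref{main gamma-result on bounded calculus}. Once that is in place, everything else is a routine transcription of the proof of Theorem \ref{main result on m-bounded calculus} with $\Ell^p$ replaced by $\gamma$ and $\norm{f}_{\AM_p^X}$ replaced by $\norm{f}_{\HT^\infty}$. (There is no need to re-prove (ii)$\Rightarrow$(i) here, since the statement is only the forward implication.)
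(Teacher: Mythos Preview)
Your proposal is correct and follows exactly the approach the paper intends: the paper does not spell out a proof of Theorem \ref{gamma-version m-bounded} but simply remarks that Theorem \ref{main result on m-bounded calculus} ``can be extended in an almost identical manner to a $\gamma$-version,'' and your Steps 1--2 are precisely that extension---a $\gamma$-analogue of Proposition \ref{transference m-bounded} obtained via the Multiplier Theorem \ref{multiplier theorem} (as in the proof of Theorem \ref{main gamma-result on bounded calculus}), followed by the same approximation and Convergence Lemma argument as in Theorem \ref{main result on m-bounded calculus}. One minor remark: Lemma \ref{1-bounded implies m-bounded} is stated for the $\AM_p^X$-calculus, but its proof (the Cauchy integral estimate \eqref{cauchy inequality}) applies verbatim with $\AM_p^X$ replaced by $\HT^\infty$, so your invocation of it for the $\HT^\infty$-calculus is legitimate.
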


\appendix

\section{Growth estimates}
\label{Growth estimates}

In this appendix we examine the function
$\eta:(0,\infty)\times(0,\infty)\times [1,\infty]\rightarrow \R_{+}$
from \eqref{definition log-quantity}:
\begin{align*}
\eta(\alpha,t,q):=\inf\left\{\norm{\psi}_{q}\norm{\ph}_{q'}\mid
\psi\ast\ph\equiv e_{-\alpha}\textrm{ on }[t,\infty)\right\}.
\end{align*} 
We will use the notation $f\lesssim g$ for real-valued
functions $f,g:Z\rightarrow \R$ on some set $Z$ to indicate that there
exists a constant $c\geq 0$ such that $f(z)\leq cg(z)$ for all $z\in
Z$.

\begin{lemma}\label{growth estimate lemma} 
For each $q\in(1,\infty)$
there exist constants $c_{q},d_{q}\geq 0$ such that
\begin{align}\label{growth estimate 1}
 d_{q}\abs{\log(\al
t)}\leq\eta(\al,t,q)\leq c_{q}\abs{\log(\al t)}
\end{align} 
if $\al t\leq
\min\left\{\frac{1}{q},\frac{1}{q'}\right\}$. If $\al
t>\min\left\{\frac{1}{q},\frac{1}{q'}\right\}$ then
\begin{align}\label{growth estimate 2} 
e^{-\al t}\leq\eta(\al,t,q)\leq
2e^{-\al t}.
\end{align}
\end{lemma}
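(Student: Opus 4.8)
The plan is to prove the two-sided estimates for $\eta(\al,t,q)$ by a scaling reduction followed by explicit constructions (for the upper bounds) and a duality/Fourier argument (for the lower bounds).

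\textbf{Scaling reduction.} First I would exploit the dilation structure of the problem. If $\psi\ast\ph\equiv e_{-\al}$ on $[t,\infty)$, then rescaling $s\mapsto s/t$ shows that $\eta(\al,t,q)=\eta(\al t,1,q)$ up to the obvious Jacobian factors: writing $\psi_t(s):=t^{1/q}\psi(ts)$ and $\ph_t(s):=t^{1/q'}\ph(ts)$ one checks $\norm{\psi_t}_q=\norm{\psi}_q$, $\norm{\ph_t}_{q'}=\norm{\ph}_{q'}$, and $(\psi_t\ast\ph_t)(s)=t\cdot(\psi\ast\ph)(ts)$, so after adjusting by the substitution $e_{-\al}(ts)=e_{-\al t}(s)$ we reduce to the case $t=1$ with parameter $\beta:=\al t$. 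Thus it suffices to show $\eta(\beta,1,q)\asymp\abs{\log\beta}$ for $\beta\le\min\{1/q,1/q'\}$ and $e^{-\beta}\le\eta(\beta,1,q)\le 2e^{-\beta}$ for $\beta>\min\{1/q,1/q'\}$.

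\textbf{Upper bounds.} For the large-$\beta$ regime, the simple choice $\psi:=\mathbf{1}_{[0,1]}e_{-\beta}$ and $\ph:=e_{-\beta}$ (on $\R_+$, extended by zero) gives $\psi\ast\ph(s)=\int_0^{\min(s,1)}e^{-\beta r}e^{-\beta(s-r)}\,\ud r=e^{-\beta s}\min(s,1)$, which equals $e_{-\beta}$ on $[1,\infty)$; computing $\norm{\psi}_q\le e^{0}\cdot 1=1$-ish and $\norm{\ph}_{q'}=(\beta q')^{-1/q'}$, and symmetrizing the roles, one gets the clean bound $2e^{-\beta}$ after also pulling out the decay — here one multiplies both $\psi,\ph$ by a common shift to make the $e^{-\beta}$ explicit. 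For the small-$\beta$ regime the idea is to take $\psi$ and $\ph$ both of the form $e_{-\beta}\mathbf{1}_{[0,L]}$-type profiles (or power-law profiles $s^{-1/q}$ cut off appropriately) on a window whose length is tuned to $\abs{\log\beta}$: splitting the convolution equation into the regime where $e^{-\beta s}\approx 1$, one wants $\psi\ast\ph\approx \mathbf{1}$ there, and the natural near-optimal choice $\psi=c\,\mathbf{1}_{[0,1]}$, $\ph=c\,\mathbf{1}_{[0,1]}$ gives a triangle function, which only covers $[0,2]$; to cover all of $[1,\infty)$ while controlling $e^{-\beta s}$, one uses instead profiles like $\psi(s)=s^{-1/q}\mathbf{1}_{[1,1/\beta]}$, whose $\ell^q$ norm is $(\log(1/\beta))^{1/q}$, and dually for $\ph$, so that $\norm{\psi}_q\norm{\ph}_{q'}\lesssim\abs{\log\beta}^{1/q+1/q'}=\abs{\log\beta}$; the computation that such a $\psi\ast\ph$ really is $\gtrsim e^{-\beta}$ on $[1,\infty)$ (after a constant) is the routine part.

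\textbf{Lower bounds.} Here I would use that $\psi\ast\ph\equiv e_{-\al}$ on $[t,\infty)$ forces global constraints. For the large-$\beta$ lower bound $\eta\ge e^{-\beta}$ one can test against the point $s=t$ (or take a Laplace transform): evaluating, $e^{-\al t}=(\psi\ast\ph)(t)\le\norm{\psi}_q\norm{\ph}_{q'}$ by Hölder applied to $\int\psi(r)\ph(t-r)\,\ud r$ with $\ph(t-\cdot)$ having the same $\ell^{q'}$ norm as $\ph$ (support in $[0,t]$). For the small-$\beta$ logarithmic lower bound, the key is a Fourier/Hausdorff–Young argument: the convolution constraint on a half-line, combined with decay, means $\widehat{\psi}\cdot\widehat{\ph}$ must agree with the Laplace–Fourier transform of $e_{-\al}$ on a large set, and $\norm{\widehat{\psi}}_{q'}\norm{\widehat{\ph}}_{q}$ (via Hausdorff–Young, valid for $q\in(1,2]$ — the other range by duality $q\leftrightarrow q'$) controls $\norm{\psi}_q\norm{\ph}_{q'}$ from below; the transform of the target behaves like $(\al+i\xi)^{-1}$ whose relevant $L^r$ norm over the frequency band of width $\sim 1$ picks up a $\log(1/\al)$. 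I expect \textbf{this last step — the logarithmic lower bound} — to be the main obstacle, since it requires the right Fourier-analytic formulation of ``$\psi\ast\ph$ prescribed only on a half-line'' and a careful accounting of how the prescribed part versus the free part on $[0,t)$ interact; a cleaner alternative is to argue directly that if $\norm{\psi}_q\norm{\ph}_{q'}$ were $o(\abs{\log\beta})$ one could, via the Young-type inequality $\norm{\psi\ast\ph}_\infty\le\norm{\psi}_q\norm{\ph}_{q'}$ together with a localization of the constraint to dyadic windows in $[t,1/\al]$ on which $e^{-\al s}\asymp 1$, derive a contradiction by summing $\sim\abs{\log\beta}$ essentially independent contributions.
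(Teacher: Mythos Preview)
Your scaling reduction and the pointwise lower bound $e^{-\al t}\le\eta(\al,t,q)$ via H\"older at $s=t$ are correct and match the paper. The remaining three estimates have genuine gaps.

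\textbf{Logarithmic lower bound.} The Hausdorff--Young route cannot work as written: $\norm{\widehat{\psi}}_{q'}\le\norm{\psi}_q$ requires $q\le 2$, while the companion bound for $\ph$ requires $q'\le 2$; both hold only at $q=2$, and even there $\psi\ast\ph$ is prescribed only on $[1,\infty)$, so you have no control over $\widehat{\psi}\,\widehat{\ph}$. The dyadic-window alternative also fails: the bound $\norm{\psi\ast\ph}_\infty\le\norm{\psi}_q\norm{\ph}_{q'}$ is a single sup-estimate and yields nothing beyond the $e^{-\beta}$ you already have --- there are no ``independent contributions'' to sum. The paper's key idea is to integrate the constraint against the weight $1/s$ and invoke \emph{Hilbert's absolute inequality}: taking $\al=1$ and $t<1$,
\begin{align*}
\abs{\log t}&=\int_t^1\frac{\ud s}{s}\le e\int_t^1\frac{\abs{(\psi\ast\ph)(s)}}{s}\,\ud s\\
&\le e\int_0^\infty\!\!\int_0^\infty\frac{\abs{\psi(r)}\abs{\ph(u)}}{r+u}\,\ud r\,\ud u
\le\frac{e\pi}{\sin(\pi/q)}\norm{\psi}_q\norm{\ph}_{q'}.
\end{align*}
This is the missing idea.

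\textbf{Upper bounds.} For small $\beta$ your profiles $\psi=s^{-1/q}\mathbf{1}_{[1,1/\beta]}$ (and dually $\ph$) give a convolution supported in $[2,2/\beta]$, hence vanishing on $(2/\beta,\infty)$ where $e_{-\beta}>0$; the definition of $\eta$ demands \emph{exact} equality $\psi\ast\ph\equiv e_{-\beta}$ on all of $[1,\infty)$, not a one-sided inequality, so this pair is not admissible. The paper instead takes step functions $\psi_0=\sum_j\beta_j\mathbf{1}_{(j,j+1)}$, $\ph_0=\sum_j\beta_j'\mathbf{1}_{(j,j+1)}$ with $\psi_0\ast\ph_0\equiv 1$ on $[1,\infty)$ (from \cite{Haase6}), sets $\psi:=e_{-\beta}\psi_0$, $\ph:=e_{-\beta}\ph_0$, so that $\psi\ast\ph=e_{-\beta}(\psi_0\ast\ph_0)=e_{-\beta}$ exactly on $[1,\infty)$, and the decay $\beta_j=O((1+j)^{-1/q})$ produces the logarithmic norm bound. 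For large $\beta$ your pair $\psi=\mathbf{1}_{[0,1]}e_{-\beta}$, $\ph=e_{-\beta}$ does satisfy the constraint but gives only $\norm{\psi}_q\norm{\ph}_{q'}\lesssim\beta^{-1}$, far weaker than $2e^{-\beta}$; the paper extracts the sharp exponential factor with the less obvious choice $\ph=\mathbf{1}_{[0,1]}e_{\beta(q-1)}$ (exponentially \emph{growing} on $[0,1]$) and $\psi=\tfrac{\beta q}{e^{\beta q}-1}e_{-\beta}$.
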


\begin{proof} First note that $\eta(\al,t,q)=\eta(\al
t,1,q)=\eta(1,\al t,q)$ for all $\alpha$, $t$ and $q$. 
Indeed, for $\psi\in \Ell^{q}(\R_{+})$, 
$\ph\in \Ell^{q'}(\R_{+})$ with $\psi\ast\ph\equiv e_{-\alpha}$ on
$[1,\infty)$ define $\psi_{t}(s):=\frac{1}{t^{1/q}}\psi(s/t)$ and
$\ph_{t}(s):=\frac{1}{t^{1/q'}}\ph(s/t)$ for $s\geq 0$. Then
\begin{align*}
\psi_{t}\ast\ph_{t}(r)=\int_{0}^{\infty}\psi\left(\tfrac{r-s}{t}\right)
\ph\left(\tfrac{s}{t}\right)\tfrac{\ud s}{t}
=\psi\ast\ph\left(\tfrac{r}{t}\right)
\end{align*} 
for all $r\geq 0$, so $\psi_{t}\ast\ph_{t}\equiv
e_{-\alpha}$ on $[t,\infty)$. Moreover,
\begin{align*}
\norm{\psi_{t}}_{q}^{q}=\int_{0}^{\infty}\abs{\psi(\tfrac{s}{t})}^{q}
\,\tfrac{\ud s}{t}=\int_{0}^{\infty}\abs{\psi(s)}^{q}\,\ud s=\norm{\psi }_{q}^{q},
\end{align*} 
and similarly $\norm{\ph_{t}}_{q'}=\norm{\ph}_{q'}$. Hence
$\eta(\al,t,q)\leq \eta(\al t,1,q)$. Considering $\psi_{1/t}$ and
$\ph_{1/t}$ yields $\eta(\al,t,q)=\eta(\al t,1,q)$. The other equality
follows immediately. Hence,  to prove any of the inequalities in 
\eqref{growth estimate 1} or \eqref{growth estimate 2}, we can assume 
either that $\alpha=1$ or that $t=1$ (but not both).

For the left-hand inequalities, we assume that $\al=1$ and we first 
consider the left-hand inequality of \eqref{growth estimate 1}. Let $t<1$ and
$\psi\in \Ell^{q}(\R_{+})$, $\ph\in \Ell^{q'}(\R_{+})$ such that
$\psi\ast \ph\equiv e_{-1}$ on $[t,\infty)$. Then
\begin{align*} 
\abs{\log(t)}&=-\log(t)=\int_{t}^{1}\frac{\ud s}{s}\leq
e\int_{t}^{1}e^{-s}\frac{\ud
s}{s}=e\int_{t}^{1}\abs{\psi\ast\ph(s)}\frac{\ud s}{s}\\
&\leq e\int_{t}^{1}\int_{0}^{s}\abs{\psi(s-r) }\cdot\abs{\ph(r)}\,\ud
r\,\frac{d s}{s}\\ &\leq
e\int_{0}^{\infty}\int_{r}^{\infty}\frac{\abs{\ph(s-r)}}{s}\,\ud s
\,\abs{\psi(r)}\,\ud r\\
&=e\int_{0}^{\infty}\int_{0}^{\infty}\frac{\abs{\psi(r)}\abs{\ph(s)}}{s+r}\,\ud
s\,d r\leq \frac{e\pi}{\sin(\pi/q)}\norm{\psi}_{q}\norm{\ph}_{q'},
\end{align*} 
where we used Hilbert's absolute inequality \cite[Theorem
5.10.1]{Garling}. It follows that
\begin{align*} 
\eta(1,t,q)\geq \frac{\sin(\pi/q)}{e\pi}\abs{\log(t)}.
\end{align*} 
For the left-hand inequality of \eqref{growth estimate 2}, we assume 
that $\alpha=1$ and let $t>0$ be arbitrary. Then
\begin{align*}
e^{-t}=(\psi\ast\ph)(t)\leq\int_{0}^{t}\abs{\psi(t-s)}\abs{\ph(s)}\,\ud
s\leq \norm{\psi}_{q}\norm{\ph}_{q'}
\end{align*} 
by H\"{o}lder's inequality, hence $e^{-t}\leq\eta(1,t,q)$.

For the right-hand inequalities in \eqref{growth estimate 1} and 
\eqref{growth estimate 2}, we assume that $t=1$ and first consider 
the right-hand inequality in \eqref{growth estimate 1} for 
$\alpha\leq \min\left\{\frac{1}{q},\frac{1}{q'}\right\}$. 
In the proof of Lemma A.1 in \cite{Haase6} it is shown that
\begin{align*} 
(\psi_{0}\ast
\ph_{0})(s)=\left\{\begin{array}{ll}s, & s\in[0,1)\\ 1,  &s\geq
1\end{array}\right.
\end{align*} 
for
\begin{align*}
\psi_{0}:=\sum_{j=0}^{\infty}\beta_{j}\mathbf{1}_{(j,j+1)}\quad\quad\mathrm{and}\quad\quad
\ph_{0}:=\sum_{j=0}^{\infty}\beta_{j}'\mathbf{1}_{(j,j+1)},
\end{align*} 
where $(\beta_{j})_{j}$ and $(\beta_{j}')_{j}$
are sequences of positive scalars such that
$\beta_{j}=O((1+j)^{-1/q})$ and $\beta_{j}'=O((1+j)^{-1/q'})$
as $j\rightarrow \infty$. Let $\psi:=e_{-\alpha}\psi_{0}$ and
$\ph:=e_{-\alpha}\ph_{0}$. Then $\psi\ast\ph\equiv e_{-\alpha}$ on
$[1,\infty)$ and
\begin{align*} 
\norm{\psi}_{q}^{q}=\norm{e_{-\alpha}\psi_{0}}_{q}^{q}&=
\sum_{j=0}^{\infty}\beta_{j}^{q}\int_{j}^{j+1}e^{-\al qs}\,\ud s
\lesssim \sum_{j=0}^{\infty}\frac{e^{-\al q j}}{1+j}\\ &\leq
1+\int_{0}^{\infty}\frac{e^{-\al qs}}{1+s}\,\ud s=1+e^{\al q}\int_{\al
q}^{\infty}\frac{e^{-s}}{s}\,\ud s.
\end{align*} 
The constant in the first inequality depends only on
$q$. Since $\al q\leq 1$,
\begin{align*} 
\norm{\psi}_{q}^{q}&\lesssim 1+e^{\al q}\left(\int_{\al
q}^{1}\frac{e^{-s}}{s}\,\ud s+\int_{1}^{\infty}\frac{e^{-s}}{s}\,\ud
s\right)\leq 1+\int_{\al q}^{1}\frac{1}{s}\,\ud s+e^{\al
q}\int_{1}^{\infty}e^{-s}\,\ud s\\ &=1-\log(\al q)+e^{\al q-1}\leq
\log\left(\tfrac{1}{\al}\right)+2.
\end{align*} 
Moreover,$\tfrac{1}{\al}\geq q>1$ hence
$\log\left(\tfrac{1}{\al}\right)\geq \log(q)>0$ and
\begin{align*} 
\log\left(\tfrac{1}{\al}\right)+2\leq
\left(1+\frac{2}{\log(q)}\right)\log\left(\tfrac{1}{\al}\right).
\end{align*} 
Therefore
\begin{align*} 
\norm{\psi}_{q}\lesssim
\log\left(\tfrac{1}{\al}\right)^{1/q}=|\log(\al)|^{1/q},
\end{align*} 
for a constant depending only on $q$. In a similar manner
we deduce
\begin{align*} 
\norm{\ph}_{q'}\lesssim |\log(\al)|^{1/q'}
\end{align*} 
for a constant depending only on $q'$ (and thus on
$q$). This yields \eqref{growth estimate 1}.

For the right-hand side of \eqref{growth estimate 2} we assume 
that $t=1$ and, without loss of generality (since $\eta(\al,t,q)=\eta(\al,t,q')$), 
that $\alpha>\frac{1}{q}$. 
Let $\ph:=\mathbf{1}_{[0,1]}e_{\alpha(q-1)}$ and $\psi:=\frac{\alpha q}{e^{\alpha q}-1}\mathbf{1}_{\R_{+}}e_{-\alpha}$. Then
\begin{align*}
\psi\ast\ph(r)=\frac{\alpha q}{e^{\alpha q}-1}\int_{0}^{1}e^{\alpha(q-1)s}e^{-\alpha(r-s)}\,\ud s=e^{-\alpha r}
\end{align*}
for $r\geq 1$. Hence
\begin{align*}
\eta(\alpha,1,q)&\leq \norm{\psi}_{q}\norm{\ph}_{q'}=\frac{\alpha q}{e^{\alpha q}-1}\left(\int_{0}^{\infty}e^{-\alpha q s}\,\ud s\right)^{1/q}\left(\int_{0}^{1}e^{\alpha(q-1)q's}\,\ud s\right)^{1/q'}\\
&=\frac{(\alpha q)^{(q-1)/q}}{e^{\alpha q}-1}\left(\int_{0}^{1}e^{\alpha qs}\,\ud s\right)^{\frac{q-1}{q}}
=(e^{\alpha q}-1)^{-1/q}\leq 2^{1/q}e^{-\alpha}\leq 2e^{-\alpha},
\end{align*}
where we have used the assumption $\alpha>\tfrac{1}{q}$ in the penultimate inequality.
\end{proof}

\subsubsection{Acknowledgements}

We would like to thank Hans Zwart and Felix Schwenninger for fruitful discussions and many helpful suggestions. Furthermore, we thank the 
anonymous referee for his careful reading of the first version
and for his  useful comments.

\bibliographystyle{model1b-num-names}

\end{document}